\DeclarePairedDelimiter{\floor}{\lfloor}{\rfloor}
\numberwithin{equation}{section}
\theoremstyle{plain}
\newtheorem{theorem}{Theorem}[section]
\newtheorem{proposition}[theorem]{Proposition}
\newtheorem{lemma}[theorem]{Lemma}
\newtheorem{example}[theorem]{Example}
\theoremstyle{definition}
\newtheorem{definition}[theorem]{Definition}
\newtheorem{remark}[theorem]{Remark}
\newcommand\R{\mathbb R}
\newcommand\M{\mathbb M}
\newcommand\N{\mathbb N}
\newcommand\Z{\mathbb{Z}}
\newcommand\iO{\int_\Omega}
\newcommand{\iOQ}{\int_{\Omega}\int_Q}
\newcommand{\iQQ}{\int_Q\int_Q}
\newcommand\iQ{\int_Q}
\newcommand\e{\varepsilon}
\newcommand\PP{\mathbb{P}}
\newcommand\QQ{\mathbb{Q}}
\newcommand\rd{\R^d}
\newcommand\rl{\R^l}
\newcommand\en{\e_n}
\newcommand\liminfn{\liminf_{n\to +\infty}}
\newcommand\ep{\varepsilon}
\newcommand{\scal}[2]{\langle #1,\,#2 \rangle}
\newcommand\wk{\rightharpoonup}
\newcommand\wkts{\overset{2-s}{\rightharpoonup}}
\newcommand\sts{\overset{2-s}{\to}}
\newcommand{\C}{\mathcal{C}^{\pdeor}}
\newcommand{\ck}{\mathcal{C}^{\pdeor_k}}
\newcommand{\cc}{\mathcal{C}}
\newcommand{\F}{\mathscr{F}_{\pdeor}}
\newcommand\Rn{\mathbb{R}^N}
\newcommand{\FF}{\mathcal{F}_{\pdeor}}
\newcommand{\FB}{\overline{\mathcal{F}}_{\pdeor}}
\newcommand{\FFN}{\mathcal{F}_{\pdeor(n\cdot)}}
\newcommand{\FBN}{\overline{\mathcal{F}}_{\pdeor(n\cdot)}}
\newcommand\rn[1]{R_{#1}^k}
\newcommand{\rnn}{R^k}
\newcommand\sn{S^k}
\newcommand\pdeep[1]{{\mathscr{A}}^{\rm div}_{#1}}
\newcommand\pdeepk[1]{{\mathscr{A}}_{k, #1}^{\rm div}}
\newcommand\pdeor{\mathscr{A}}
\newcommand{\px}{\mathbb{T}_x}
\newcommand{\py}{\mathbb{T}_y}
\newcommand{\A}{\mathbb{A}}
\newcommand\aaa{{\mathcal{A}}}
\newcommand{\cx}[0]{\mathcal{C}_{x}}
\newcommand{\qa}[0]{Q_{\pdeor}}
\newcommand{\iq}[0]{\int_Q}
\newcommand{\ze}{{Z}_{\e}}
\newcommand{\qe}{Q_{\e,z}}
\newcommand\be[1]{\begin{equation}\label{#1}}
\newcommand\ee{\end{equation}}
\newcommand\ba[1]{\begin{align}\label{#1}}
\newcommand\ea{\end{align}}
\newcommand\bas{\begin{align*}}
\newcommand\eas{\end{align*}}
\newcommand\nn{\nonumber}
\title [Homogenization for $\pdeor(x)-$quasiconvexity]
{Homogenization of integral energies under periodically oscillating differential constraints}
\author[E. Davoli] {Elisa Davoli} 
\address[Elisa Davoli]{Department of Mathematics\\ Carnegie Mellon University\\Forbes Avenue\\Pittsburgh PA 15213}
\email[E. Davoli]{edavoli@andrew.cmu.edu}
\author[I. Fonseca] {Irene Fonseca} 
\address[Irene Fonseca]{Department of Mathematics\\ Carnegie Mellon University\\Forbes Avenue\\Pittsburgh PA 15213}
\email[I. Fonseca]{fonseca@andrew.cmu.edu}
\subjclass[2010]{49J45; 35D99; 49K20}
\keywords{Homogenization, two-scale convergence, $\pdeor-$quasiconvexity.}
\begin{document} 
\vskip .2truecm
\begin{abstract}
\small{A homogenization result for a family of integral energies
$$u_{\ep}\mapsto\iO f(u_{\ep}(x))\,dx,\quad \ep\to 0^+,$$
is presented, where the fields $u_{\ep}$ are subjected to periodic first order oscillating differential constraints in divergence form. The work is based on the theory of $\pdeor$-quasiconvexity with variable coefficients and on two-scale convergence techniques.}
\end{abstract}
\maketitle
\section{Introduction}
This paper is the first step toward a complete understanding of homogenization problems for oscillating energies subjected to oscillating linear differential constraints, in the framework of $\pdeor-$quasiconvexity with variable coefficients. To be precise, we initiate the study of integral representations for limits of oscillating integral energies
$$u_{\ep}\mapsto \iO f\Big(x,\frac{x}{\ep^{\alpha}},u_{\ep}(x)\Big)\,dx,$$
where $\Omega\subset \R^N$ is an open bounded domain, $\ep\to 0^+$, and the fields $u_{\ep}\in L^p(\Omega;\rd)$ are subjected to periodically oscillating differential constraints such as
\be{eq:constrain-no-divergence}
{\mathscr{A}}_{\ep}u_{\ep}:=\sum_{i=1}^N  A^i\Big(\frac{x}{\ep^{\beta}}\Big)\frac{\partial u_{\ep}(x)}{\partial {x_i}}\to 0\quad\text{strongly in }W^{-1,p}(\Omega;\R^l),
\ee
or in divergence form
\be{eq:constrain-divergence}
\pdeep{\ep}u_{\ep}:=\sum_{i=1}^N \frac{\partial}{\partial {x_i}} \Big(A^i\Big(\frac{x}{\ep^{\beta}}\Big)u_{\ep}(x)\Big)\to 0\quad\text{strongly in }W^{-1,p}(\Omega;\R^l),
\ee
with $1<p<+\infty$, $A^i(x)\in {\rm Lin}\,(\R^d;\R^l)\equiv \M^{l\times d}$ for every $x\in\R^N$, $i=1,\cdots,N$, $d,l\geq 1$, and where $\alpha,\beta$ are two nonnegative parameters. Here, and in what follows, $\M^{l\times d}$ stands for the linear space of matrices with $l$ rows and $d$ columns. 

Oscillating divergence-type constraints as in \eqref{eq:constrain-divergence} appear in the homogenization theory of systems of second order elliptic partial differential equations. Indeed, if $u_{\ep}=\nabla v_{\ep}$, with $v_{\ep}\in W^{1,p}(\Omega)$ for every $\ep>0$, and $A^i(x)=A(x)\in \M^{N\times N}$ for $i=1,\cdots,N$, then considering \eqref{eq:constrain-divergence} reduces to the homogenization problem of finding the effective behavior of (weak) limits of $v_{\ep}$, where
$$\text{div }\Big(A\Big(\frac{x}{\ep}\Big)\nabla v_{\ep}\Big)\to 0\quad\text{strongly in }W^{-1,p}(\Omega),\,1<p<+\infty.$$
These problems have been extensively studied in the literature (see {e.g.} \cite{allaire}, \cite[Chapter 1, Section 6]{bensoussan.lions.papanicolau}, \cite{cioranescu.donato}, and the references therein).\\

Different regimes are expected to arise depending on the relation between $\alpha$ and $\beta$. 
Here we will consider $\beta>0$, and we will assume that the energy density $f$ is constant in the first two variables but the differential constraint in divergence form \eqref{eq:constrain-divergence} oscillates periodically. The limit scenario $\alpha>0, \beta=0$ and \eqref{eq:constrain-no-divergence} (treated in \cite{fonseca.kromer} for constant coefficients), {i.e.}, the energy density is oscillating but the differential constraint is fixed, and the situation in which $\alpha>0$ and $\beta>0$, will be the subject of forthcoming papers.\\

The key tool for our analysis is the notion of $\pdeor-$quasiconvexity. For $i=1\cdots,N$, consider matrix-valued maps $A^i\in C^{\infty}(\Omega;\M^{l\times d})$, and define $\pdeor$ as the differential operator such that
$$\pdeor v (x):=\sum_{i=1}^N A^i(x)\frac{\partial v(x)}{\partial {x_i}},\quad x\in\Omega,$$
for $v\in L^1_{\rm loc}(\Omega; \R^d)$, where $\frac{\partial v}{\partial {x_i}}$ is to be interpreted in the sense of distributions.
We require that the operator $\pdeor$ satisfies a uniform constant-rank assumption (see \cite{murat}) {i.e.}, there exists $r\in \N$ such that 
 \begin{equation}
 \label{cr}
 \text{rank }\left(\sum_{i=1}^N A^i(x)w_i\right)=r\quad\text{for every }w\in\mathbb{S}^{n-1},
 \end{equation} uniformly with respect to $x$, where $\mathbb{S}^{N-1}$ is the unit sphere in $\mathbb{R}^N$. The properties of $\pdeor$-quasiconvexity in the case of constant coefficients were first investigated by Dacorogna in \cite{dacorogna}, and then studied by Fonseca and M\"uller in \cite{fonseca.muller} (see also \cite{fonseca.dacorogna}). In \cite{santos}  Santos extended the analysis of \cite{fonseca.muller} to the case in which the coefficients of the differential operator $\pdeor$ depend on the space variable.

\begin{definition}
Let $f:\R^d \to \R$ be a continuous function, let $Q$ be the unit cube in $\R^N$  centered at the origin, 
$$Q=\Big(-\frac{1}{2},\frac{1}{2}\Big)^N,$$
and denote by  $C^{\infty}_{\rm per}(\R^N;\R^d)$ the set of smooth maps which are $Q$-periodic in $\R^N$. Consider the set
$$\cx:=\Big\{w\in C^{\infty}_{\rm per}(\R^N;\R^d):\,\int_Q{w(y)\,dy}=0,\, \sum_{i=1}^N A^i(x)\frac{\partial w(y)}{\partial {y_i}}=0 \Big\}.$$
For a.e. $x\in\Omega$, the \emph{$\pdeor-$quasiconvex envelope} of $f$ in $x\in\Omega$ is defined as
$$\xi\mapsto\qa f(x,\xi):=\inf\Big\{\iq f(\xi+w(y))\,dy:\,w\in\cx\Big\}.$$
$f$ is said to be \emph{$\pdeor$-quasiconvex} if $f(\xi)=\qa f(x,\xi)$ for a.e. $x\in\Omega$ and all $\xi\in\R^d$.
\end{definition}
We remark that when $\pdeor := \rm curl$, {i.e.}, when $v=\nabla\phi$ for some $\phi\in	W^{ 1,1}_{\rm loc} (\Omega; \R^m )$, then $d=m\times N$, then $\pdeor$-quasiconvexity reduces to Morrey's notion of quasiconvexity (see \cite{acerbi.fusco, ball, marcellini, morrey}).

The following theorem was proved in \cite{santos} in the more general case when $f$ is a Carath\'eodory function, generalizing the corresponding results \cite[Theorems 3.6 and 3.7]{fonseca.muller} in the case of constant coefficients ({i.e.} $A^i(x)\equiv A^i\in\M^{l\times d}$ for every $i=1,\cdots,N$).
\begin{theorem}
\label{thm:santos}
\nonumber
Let $\Omega$ be an open bounded domain in $\R^N$, let $A^i\in C^{\infty}(\Omega;\M^{l\times d})\cap W^{1,\infty}(\Omega;\M^{l\times d})$, $i=1,\cdots, N$, $d\geq 1$, $1<p<+\infty$, and assume that the operator $\pdeor$ satisfies the constant rank condition \eqref{cr}. Let $f:\rd\to [0,+\infty)$ be a continuous function satisfying
\begin{enumerate}
\item [(i)]$\quad 0\leq f(v)\leq C(1+|v|^p),$
\item[(ii)] $|f(v_1)-f(v_2)|\leq C(1+|v_1|^{p-1}+|v_2|^{p-1})|v_1-v_2|$
\end{enumerate}
for all $v,v_1,v_2\in\rd$, and for some $C>0$. Then $\pdeor-$quasiconvexity is a necessary and sufficient condition for lower semicontinuity of the functional
$$v\mapsto\iO f(v(x))\,dx$$
for sequences $v_{\ep}\wk v$ weakly in $L^p(\Omega;\rd)$ and such that $\pdeor v_{\ep}\to 0$ strongly in $W^{-1,p}(\Omega;\rl)$.
\end{theorem}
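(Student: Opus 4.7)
The plan is to adapt the two-step argument of Fonseca and M\"uller \cite{fonseca.muller} to the variable-coefficient setting, using the smoothness of the $A^i$ to freeze the operator at a point and the constant rank condition \eqref{cr} to restore exact admissibility after freezing.

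For the necessity direction, fix $\xi\in\rd$ and a Lebesgue point $x_0\in\Omega$, and pick an arbitrary $w\in\mathcal{C}_{x_0}$. On a small ball $B(x_0,\rho)$ with a cutoff $\varphi\in C^\infty_c(B(x_0,\rho);[0,1])$, define $v_n(x):=\xi+\varphi(x)w(nx)$, so that $v_n\wk\xi$ in $L^p$. Writing
$$\pdeor v_n(x)=\sum_{i=1}^N A^i(x)\varphi(x)(\partial_i w)(nx)+\sum_{i=1}^N A^i(x)\partial_i\varphi(x)\,w(nx),$$
the second term is $L^p$-bounded with rapid zero-mean oscillation and so tends to $0$ in $W^{-1,p}$. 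The first term equals $\sum_i A^i(x_0)\varphi(x)(\partial_i w)(nx)$, which vanishes identically since $w\in\mathcal{C}_{x_0}$, plus a residual $\sum_i(A^i(x)-A^i(x_0))\varphi(x)(\partial_i w)(nx)$ of $L^p$-norm $O(\rho)$ uniformly in $n$. Adding a lower-order correction $z_{n,\rho}$ obtained by projecting this residual onto $\ker\pdeor$ through the constant rank projection of \cite{santos} produces $\tilde v_n\wk\xi$ with $\pdeor\tilde v_n\to 0$ in $W^{-1,p}$. By the Riemann--Lebesgue lemma, $\iO f(\tilde v_n)\,dx\to \int_{B(x_0,\rho)}[\iq f(\xi+\varphi(x)w(y))\,dy]\,dx+\int_{\Omega\setminus B(x_0,\rho)}f(\xi)\,dx$, and comparing with the hypothesised lower semicontinuity of $v\mapsto\iO f(v)\,dx$ at the constant $\xi$, dividing by $|B(x_0,\rho)|$, letting $\rho\to 0$ via Lebesgue differentiation and removing the cutoff yields $f(\xi)\le\iq f(\xi+w(y))\,dy$, i.e.\ the $\pdeor$-quasiconvexity of $f$.

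For sufficiency, let $v_n\wk v$ in $L^p(\Omega;\rd)$ with $\pdeor v_n\to 0$ in $W^{-1,p}$. The $\pdeor$-Decomposition Lemma (\cite{fonseca.muller}, adapted to variable coefficients in \cite{santos}) together with growth hypothesis (i) allows us to replace $\{v_n\}$ by a $p$-equiintegrable sequence without increasing the limit energy. Let $\mu$ be a weak-$\ast$ limit of the nonnegative measures $f(v_n)\mathcal{L}^N$ on $\Omega$; by a Fatou-type argument it suffices to prove $\mathrm{d}\mu/\mathrm{d}\mathcal{L}^N(x_0)\ge f(v(x_0))$ at almost every Lebesgue point $x_0$ of $v$. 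Pick radii $r_k\downarrow 0$ with $\mu(\partial B(x_0,r_k))=0$ and rescale $u_{n,k}(y):=v_n(x_0+r_k y)$ on $Q$. The freezing step consists in comparing the operator $\pdeor_{x_0}:=\sum_i A^i(x_0)\partial_i$ with $\sum_i A^i(x_0+r_k y)\partial_i$ acting on $u_{n,k}$: by the smoothness of the $A^i$, the difference is bounded in $W^{-1,p}(Q)$ by $r_k\|\nabla A^i\|_\infty\|u_{n,k}\|_{L^p}$, while $\sum_i A^i(x_0+r_k y)\partial_i u_{n,k}(y)=r_k(\pdeor v_n)(x_0+r_k y)$ vanishes in $W^{-1,p}(Q)$ as $n\to\infty$ by the scaling of negative Sobolev norms. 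A diagonal extraction yields $u_k:=u_{n(k),k}\wk v(x_0)$ in $L^p(Q;\rd)$ with $\pdeor_{x_0}u_k\to 0$ in $W^{-1,p}(Q)$. Since $f$ is assumed $\pdeor_{x_0}$-quasiconvex, the constant-coefficient Fonseca--M\"uller theorem gives $f(v(x_0))\le\liminf_k\iq f(u_k)\,dy=\mathrm{d}\mu/\mathrm{d}\mathcal{L}^N(x_0)$, and integrating over $\Omega$ concludes.

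The principal obstacle in both directions is precisely this freezing of the coefficients: one must show that the error $\sum_i(A^i(x)-A^i(x_0))\partial_i$ is negligible in $W^{-1,p}$ after either rapid oscillation ($n\to\infty$) or rescaling ($r_k\to 0$). This is delicate because $\partial_i v_n$ is a priori only a distribution, so the error cannot be controlled by simple $L^p$ estimates; one is forced to exploit the Lipschitz regularity of the $A^i$ (to produce the small factor $r_k$ or the vanishing mean), and the constant rank condition \eqref{cr} to generate the lower-order corrections that turn the approximately constrained sequences produced by the frozen operator into genuinely $\pdeor$-admissible ones with the same weak limits and energies.
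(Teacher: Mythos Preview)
The paper does not prove this theorem: it is quoted from Santos \cite{santos} as background (see the sentence preceding the statement, ``The following theorem was proved in \cite{santos}\ldots''). There is therefore no proof in the paper to compare against.

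That said, your outline follows exactly the strategy one expects, namely the Fonseca--M\"uller scheme with coefficients frozen at a Lebesgue point, and it is essentially correct. One computational slip should be fixed. In the necessity step you write
\[
\pdeor v_n(x)=\sum_{i=1}^N A^i(x)\varphi(x)(\partial_i w)(nx)+\sum_{i=1}^N A^i(x)\partial_i\varphi(x)\,w(nx),
\]
but differentiating $w(nx)$ produces an extra factor $n$ in the first sum. Consequently the residual after freezing, namely $n\sum_i(A^i(x)-A^i(x_0))\varphi(x)(\partial_i w)(nx)$, is \emph{not} $O(\rho)$ in $L^p$ uniformly in $n$ as you claim. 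The repair is to estimate in $W^{-1,p}$ rather than $L^p$: rewrite $n(\partial_i w)(nx)=\partial_{x_i}[w(nx)]$ and put the derivative onto the small Lipschitz factor $\varphi(x)(A^i(x)-A^i(x_0))$ by the product rule; this yields a divergence term with $W^{-1,p}$ norm $O(\rho)$ uniformly in $n$, plus a lower-order term bounded in $L^p$ that converges weakly to zero and hence strongly in $W^{-1,p}$. The same device is what actually justifies, in your sufficiency step, the bound ``$r_k\|\nabla A^i\|_\infty\|u_{n,k}\|_{L^p}$'' on the freezing error: that estimate is valid in $W^{-1,p}(Q)$, not in $L^p(Q)$, and again relies on writing the commutator in divergence form. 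With this correction the sketch stands.
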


In the case of constant coefficients, in \cite{braides.fonseca.leoni} Braides, Fonseca and Leoni provided an integral representation formula for relaxation problems in the context of $\pdeor$-quasiconvexity and 
 presented (via $\Gamma$-convergence) homogenization results for periodic integrands evaluated along $\pdeor-$free fields. 
 Their homogenization results were later generalized in \cite{fonseca.kromer}, where Fonseca and Kr\"omer worked still in the framework of constant coefficients but under weaker assumptions on the energy density $f$.\\ 
 
This paper is devoted to extending the previous homogenization results to the case in which $\pdeor$ is a differential operator with nonconstant $L^{\infty}$-coefficients, the energies under consideration are of the type
$$u_{\ep}\mapsto\int_{\Omega} f(u_{\ep}(x))\,dx,$$
where $u_{\ep}\wk u$ weakly in $L^p(\Omega;\R^d)$, and $$\pdeep{\e}u_{\ep}:=\sum_{i=1}^N \frac{\partial}{\partial {x_i}}\Big(A^i\Big(\frac{x}{\ep}\Big)u_{\ep}(x)\Big)\to 0\quad\text{strongly in }W^{-1,q}(\Omega;\R^l)$$
for all $1\leq q<p$. {We point out that the result in Theorem \ref{thm:santos} \cite{santos} covers the case $q=p$. Our analysis includes the case when $q=p$ if the operator $\pdeor$ has smooth coefficients. However, in the general situation when $\pdeor$ has bounded coefficients, the assumption $1\leq q<p$ is required, in order to satisfy some truncation and $p$-equiintegrability arguments (see the proofs of Theorems \ref{thm:main-homogenized-pde} and \ref{thm:main-homogenized-pde-bdd}). }

Our starting point is a characterization of the set $\C$ of limits of $\pdeep{\e}-$vanishing fields $u_{\ep}$. We show in Proposition \ref{lemma:limit-maps} that a function $u\in L^p(\Omega;\rd)$ belongs to $\C$ if and only if there exists a map $w\in L^p(\Omega;L^p_{\rm per}(\R^N;\rd))$ such that $\iQ w(x,y)\,dy=0$ for a.e. $x\in\Omega$,
$$u_{\ep}\sts u+w$$
strongly two-scale in $L^p(\Omega\times Q;\rd)$ (see Definition \ref{def:2-scale}), and $u+w$ satisfies the differential constraints
\be{eq:const1}\sum_{i=1}^N \frac{\partial}{\partial {x_i}}\Big(\iQ A^i(y)(u(x)+w(x,y))\,dy\Big)=0\ee
in $W^{-1,p}(\Omega;\rl)$, and
\be{eq:const2}\sum_{i=1}^N \frac{\partial}{\partial {y_i}}(A^i(y)(u(x)+w(x,y)))=0\ee
in $W^{-1,p}(Q;\rl)$ for a.e. $x\in\Omega$. This generalizes the classical characterization of 2-scale limits of solutions to linear elliptic partial differential equations in divergence form in \cite[Theorem 2.3]{allaire} to the case of first order linear systems.

For every $u\in\C$, we denote by $\C_u$ the class of maps $w$ as above. We then prove that the homogenized energy is given by the functional
$$\F(u):=\begin{cases}\inf_{r>0}\inf\Big\{\liminf_{n\to +\infty}\FBN^{\,r}(u_n):\,u_n\wk u\quad\text{weakly in }L^p(\Omega;\R^d)\Big\}\\\text{if }u\in\C,\\
+\infty\quad\text{otherwise in }L^p(\Omega;\R^d),\end{cases}$$
where
$$\FBN^{\,r}(v):=\begin{cases}\inf\Big\{\iO f(v(x)+w_n(x,y))\,dy\,dx:w\in {\mathcal C}^{\pdeor(n\cdot)}_{v},\,\|w\|_{L^p(\Omega\times Q;\R^d)}\leq r\Big\}\\
\text{if }v\in\,{\mathcal C}^{\pdeor(n\cdot)}_r,\\
+\infty\quad\text{otherwise in }L^p(\Omega;\R^d),\end{cases}$$
the classes ${\mathcal{C}_v^{\pdeor(n\cdot)}}$ are defined analogously to $\C_v$ by replacing the operators $A^i(\cdot)$ with $A^i(n\cdot)$ in \eqref{eq:const1} and \eqref{eq:const2}, and $$\mathcal{C}^{\pdeor(n\cdot)}_r:=\{v\in L^p(\Omega;\R^d):\,\exists w\in {\mathcal{C}_v^{\pdeor(n\cdot)}}\quad\text{with }\|w\|_{L^p(\Omega\times Q;\R^d)}\leq r\},\quad r>0.$$
Our main result is the following.
\begin{theorem}
\label{thm:second-main}
Let $1<p<+\infty$. Let $A^i\in L^{\infty}(Q;\M^{l\times d})$, $i=1,\cdots,N$, and let $f:\R^d\to [0,+\infty)$ be a continuous map satisfying the growth condition
$$0\leq f(v)\leq C(1+|v|^p)\quad\text{for every }v\in\rd,\quad\text{and some }C>0.$$
Then, for every $u\in\C$ there holds
 \begin{align*}
 &\inf\Big\{\liminf_{\ep\to 0}\iO f(u_{\ep}(x))\,dx:\,u_{\ep}\wk u\quad\text{weakly in }L^p(\Omega;\rd)\\
 &\qquad\text{and }\pdeep{\e}u_{\e}\to 0\quad\text{strongly in }W^{-1,q}(\Omega;\rl)\text{ for every }1\leq q<p\Big\}\\
 &\quad=\inf\Big\{\limsup_{\ep\to 0}\iO f(u_{\ep}(x))\,dx:\,u_{\ep}\wk u\quad\text{weakly in }L^p(\Omega;\rd)\\
 &\qquad\text{and }\pdeep{\e}u_{\e}\to 0\quad\text{strongly in }W^{-1,q}(\Omega;\rl)\text{ for every }1\leq q<p\Big\}=\F(u).
 \end{align*}
 \end{theorem}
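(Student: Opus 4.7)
My plan is to prove the two inequalities
$$\inf\{\liminf\} \geq \F(u) \quad \text{and} \quad \F(u) \geq \inf\{\limsup\};$$
the claimed identities then follow together with the trivial bound $\inf\{\liminf\} \leq \inf\{\limsup\}$. Both bounds rely on the two-scale characterization of $\C$ given by Proposition \ref{lemma:limit-maps}, combined with a diagonalization between the genuine oscillation scale $\e \to 0^+$ and the ``frozen-scale'' problem $n\to +\infty$ underlying $\FBN^{\,r}$.

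For the \textbf{lower bound}, I would fix a competitor $u_\e \wk u$ weakly in $L^p(\Omega;\rd)$ with $\pdeep{\e} u_\e \to 0$ strongly in $W^{-1,q}(\Omega;\rl)$ for every $1 \leq q < p$, assuming $\liminf_\e \iO f(u_\e(x))\, dx < +\infty$. By Proposition \ref{lemma:limit-maps}, up to a subsequence $u_\e$ two-scale converges to $u + w$ for some $w \in \C_u$. Along a suitable subsequence $\e_n \to 0^+$, applying a periodic unfolding on cubes of side $1/n$ I would construct $v_n \in L^p(\Omega;\rd)$ and $\tilde w_n \in \mathcal{C}^{\pdeor(n\cdot)}_{v_n}$ with $v_n \wk u$ and $\|\tilde w_n\|_{L^p(\Omega\times Q;\rd)}$ bounded uniformly in $n$, satisfying
$$\iO\iq f(v_n(x) + \tilde w_n(x,y))\, dy\, dx \leq \liminf_\e \iO f(u_\e(x))\, dx + o(1).$$
This reduction requires a truncation and $p$-equiintegrability argument that critically uses $q < p$ to upgrade the $W^{-1,q}$-smallness of $\pdeep{\e} u_\e$ to $W^{-1,p}$-smallness on the modified sequence. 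Taking the infimum over admissible $\tilde w_n$ at scale $n$ bounds the left-hand side from below by $\FBN^{\,r}(v_n)$ for some $r$ sufficiently large, and passing to $\liminf_n$ and $\inf_{r>0}$ produces $\F(u)$.

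For the \textbf{upper bound}, given $\eta > 0$ I would pick $r > 0$ and $u_n \wk u$ weakly in $L^p(\Omega;\rd)$ with $\liminf_n \FBN^{\,r}(u_n) \leq \F(u) + \eta$, and for each $n$ a near-minimizer $w_n \in \mathcal{C}^{\pdeor(n\cdot)}_{u_n}$ with $\|w_n\|_{L^p(\Omega\times Q;\rd)}\leq r$ and
$$\iO\iq f(u_n(x) + w_n(x,y))\, dy\, dx \leq \FBN^{\,r}(u_n) + 1/n.$$
Combining the ansatz in Proposition \ref{lemma:limit-maps} read backwards with a rescaling of $w_n$ matching the $n$-scale periodicity in $y$ to the $\e$-scale in $x$, I construct oscillating competitors $u_n^\e$ at scale $\e\to 0^+$ with $u_n^\e \wk u_n$ in $L^p$, $\pdeep{\e} u_n^\e \to 0$ in $W^{-1,q}$ for every $q<p$, and $\iO f(u_n^\e(x))\,dx \to \iO\iq f(u_n(x) + w_n(x,y))\,dy\,dx$ as $\e\to 0^+$. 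A smoothing in $x$ and an Attouch-type diagonal extraction jointly in $(n,\e)$ then yield an admissible sequence $U_k\wk u$ with
$$\limsup_k \iO f(U_k(x))\,dx \leq \F(u) + \eta,$$
so that $\inf\{\limsup\} \leq \F(u) + \eta$; letting $\eta \to 0^+$ concludes.

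The main technical obstacle is the lower bound, specifically the passage from a genuine two-scale limit $w \in \C_u$ at scale $\e$ to an element $\tilde w_n \in \mathcal{C}^{\pdeor(n\cdot)}_{v_n}$ at the frozen scale $1/n$. Since $A^i$ is only $L^\infty(Q)$, the coefficients cannot be differentiated and Theorem \ref{thm:santos} does not apply directly; the key device is a decomposition/truncation producing a $p$-equiintegrable modification of $u_\e$ whose differential residual in $W^{-1,q}$ can be upgraded to $W^{-1,p}$, and it is precisely this upgrade that forces the strict inequality $q < p$ highlighted by the authors.
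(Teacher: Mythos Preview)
Your overall two-inequality scheme is the same as the paper's, and your upper-bound plan (pick near-minimizers $w_n\in\mathcal{C}^{\pdeor(n\cdot)}_{u_n}$, build oscillating competitors, diagonalize in $(n,\e)$) matches Proposition~\ref{thm:limsup}. But there are two genuine gaps.

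\textbf{Lower bound: the unfolding step hides the real mechanism.} You write that ``applying a periodic unfolding on cubes of side $1/n$'' produces $v_n$ and $\tilde w_n\in\mathcal{C}^{\pdeor(n\cdot)}_{v_n}$. This is precisely the hard step, and unfolding $u_\e$ directly does not yield the frozen-scale constraint. The paper's route (Proposition~\ref{thm:liminf}, Steps 1--5) first performs the change of variables $v_{\e_n}:=\aaa(x/\e_n)u_{\e_n}$, so that the \emph{variable-coefficient} constraint $\pdeep{\e_n}u_{\e_n}\to 0$ becomes the \emph{constant-coefficient} constraint $\mathrm{div}\,v_{\e_n}\to 0$; only then does it project onto divergence-free fields, adjust scales via $\theta_{\nu,n}=\nu\e_n\lfloor 1/(\nu\e_n)\rfloor$ to force integer frequencies, unfold at scale $1/\nu$, and finally transform back by $\aaa^{-1}$. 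Both transformations require the uniform ellipticity hypothesis (H) on $\aaa$, which you do not invoke. Without this detour through a constant-coefficient operator, there is no apparent reason why the unfolded field should satisfy the $y$-constraint $\sum_i\partial_{y_i}(A^i(ny)(v_n+\tilde w_n))=0$ \emph{exactly}, nor why the $x$-constraint survives; and the projection of Lemma~\ref{lemma:projection} is only available once you are on the divergence side.

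\textbf{Upper bound for merely bounded $A^i$: wrong smoothing.} You rely on Proposition~\ref{lemma:limit-maps} ``read backwards'', but that proposition is proved only for $A^i\in C^\infty_{\rm per}$; Remark~\ref{rk:not-so-easy} explains why the recovery direction fails for bounded coefficients (the maps $u^\e_\delta(x)=\hat v_\delta(x,x/\e)$ need not be measurable). The paper's fix (Section~\ref{section:bdd}) is to mollify the \emph{operators}, $A^i_k:=A^i*\rho_k$, apply the smooth-case result to $\pdeor_k$, and pass $k\to\infty$ using Egoroff's theorem and $p$-equiintegrability to control $\|(A^i_k-A^i)(x/\e)u_\e\|_{L^q}$ (this is exactly where $q<p$ is used on the limsup side). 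Your ``smoothing in $x$'' of the functions does not address the non-smoothness of the coefficients and will not rescue the recovery construction.
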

 \begin{remark}\begin{enumerate} \item[(i)] As a consequence of Theorem \ref{thm:santos}, we expected the homogenized energy to be related to the effective energy for an ``$\pdeor$-quasiconvex" envelope of the function $f$, with the role of the differential constraint $\pdeor$ to be replaced by the limit constraints \eqref{eq:const1} and \eqref{eq:const2}. We stress the fact that here the oscillatory behavior of the differential constraint as $\ep\to 0$ forces the relaxation with respect to \eqref{eq:const1} and \eqref{eq:const2} and the homogenization in the differential constraint to happen somewhat simultaneously. Indeed, for every $n$ the functional $\FBN^{\,r}$ is obtained as a truncated version of a relaxation with respect to the limit differential constraints dilated by a factor $n$, and is evaluated on a fixed element of a sequence of maps approaching $u$, whereas the limit functional $\F(u)$ is deduced by a ``diagonal" procedure, as $n$ tends to $+\infty$.
 \item[(ii)] {The truncation in the definition of the functionals $\FBN^{\,r}$ plays a key role in the proof of the limsup inequality
 \bas&\inf\Big\{\limsup_{\ep\to 0}\iO f(u_{\ep}(x))\,dx:\,u_{\ep}\wk u\quad\text{weakly in }L^p(\Omega;\rd)\\
 &\qquad\text{and }\pdeep{\e}u_{\e}\to 0\quad\text{strongly in }W^{-1,q}(\Omega;\rl)\text{ for every }1\leq q<p\Big\}\leq\F(u),
 \end{align*}
 because it provides boundedness of the ``recovery sequences" and thus allows us to apply a diagonalization argument (see Step 3 in the proof of Proposition \ref{thm:limsup}).}
 \item[(iii)]
 We remark that, as opposed to the case in which the operators $A^i$ are constant, we cannot expect the homogenized energy to be local, {i.e.}, that there exists $f_{hom}:\Omega\times \R^d\to[0,+\infty)$ such that
\be{eq:locality}\FF(u)=\int_{\Omega}f_{hom}(x,u(x))\,dx.\ee We show in Example \ref{example:nonlocal}  that  locality in the sense of \eqref{eq:locality} may fail even when the function $f$ is convex in its second variable. 
\end{enumerate}
\end{remark}
As in \cite{fonseca.kromer}, the proof of this result is based on the so-called \emph{unfolding operator}, introduced in \cite{cioranescu.damlamian.griso, cioranescu.damlamian.griso08} (see also \cite{visintin, visintin1} and Subsection \ref{subsection:unfolding}). A first difference with \cite[Theorem 1.1]{fonseca.kromer} ({i.e.}, with the case in which the operators $A^i$ are constant) is the fact that we are unable to work with exact solutions of the system $\pdeep{\ep} u_{\ep}=0$, but instead we consider sequences of asymptotically $\pdeep{\ep}-$vanishing fields. As pointed out in \cite{santos}, in the case of variable coefficients the natural framework is pseudo-differential operators. In this setting, we do not project directly onto the kernel of a differential constraint $\pdeor$, but rather we construct an ``approximate" projection operator $P$ such that for every field $v\in L^p$, the $W^{-1,p}$ norm of $\pdeor Pv$ is controlled by the $W^{-1,p}$ norm of $v$ itself (we refer to \cite[Subsection 2.1]{
santos} for a detailed explanation of this issue, and to the references therein for a treatment of the main properties of pseudo-differential operators). 

The crucial difference with respect to the case of constant coefficients is the structure of the set $\C$. In the case in which the condition $\pdeep{\ep}u_{\ep}\to 0$ is replaced by $\pdeor u_{\ep}=0$, with $\pdeor$ being independent of the space variable, \eqref{eq:const1} and \eqref{eq:const2} decouple (see \cite[Theorem 1.2]{fonseca.kromer}) becoming separate requirements on $w$ and $u$. However, in our situation they can not be dealt with separately, and this forces the structure of the homogenized energy to be much more involved.

The oscillatory behavior of the differential constraint and its $\ep$-dependent structure render this problem quite technical due to the difficulty in obtaining a suitable projection operator on the limit differential constraint. Moreover, due to the coupling between \eqref{eq:const1} and \eqref{eq:const2} and the dependence of the operators on $\ep$, the pseudo-differential operators method cannot be applied directly here. In order to solve this problem, in Lemma \ref{lemma:projection} we are led to impose a uniform invertibility requirement on the differential operator. To be precise, we require $l\times N=d$ and we assume that there exists a positive constant $\gamma$ such that the operator $\mathcal{A}(y)\in{\rm Lin}\,(\R^d;\R^d)$, defined as
$$\mathcal{A}(y)\xi:=\left(\begin{array}{c}(A^1(y)\xi)^T\\\vdots\\(A^N(y)\xi)^T\end{array}\right)\in \M^{N\times l}\cong\R^{d}\quad\text{for every }\xi\in\R^d,$$
satisfies
\begin{align}\nonumber(H)\quad \mathcal{A}(y)\lambda\cdot\lambda\geq \gamma|\lambda|^2\quad\text{for every }\lambda\in\rd\text{ and }y\in\R^N.
\end{align}
We remark that assumption $(H)$ is quite natural, as it represents a higher-dimensional version of the classical uniform ellipticity assumption (see {e.g.} \cite[(2.2)]{allaire}).

The strategy of our argument consists in first proving Theorem \ref{thm:second-main} in the case in which the operators $A^i$ are smooth. The general case is then deduced by means of an approximation argument of bounded operators by smooth ones, and by an application of Severini-Egoroff's theorem and
$p-$equiintegrability (see Section \ref{section:bdd}). 

Our main theorem is consistent with the relaxation results obtained in \cite{braides.fonseca.leoni} in the case of constant coefficients. When the linear operators $A^i$ are constant, we prove in Subsection \ref{subsection:general} that the homogenized energy $\F$ and Theorem \ref{thm:second-main} reduce to the $\pdeor-$quasiconvex envelope of $f$ and \cite[Theorem 1.1]{braides.fonseca.leoni}, respectively.\\ 

This article is organized as follows. In Section \ref{section:prel} we introduce notation and recall some preliminary results on two-scale convergence and on the unfolding operator. In Section \ref{section:pde} we provide a characterization of the limits of $\pdeep{\ep}$-vanishing fields (see Proposition \ref{lemma:limit-maps}). Section \ref{section:smooth} is devoted to the proof of our main result, Theorem \ref{thm:second-main}, for smooth operators $\pdeep{\ep}$. The argument is extended to the case in which $\pdeep{\ep}$ are only bounded in Section \ref{section:bdd}.

\section{Preliminary results}
\label{section:prel}
Throughout this paper $\Omega\subset \R^N$ is an open bounded domain and $\mathcal{O}(\Omega)$ is the set of open subsets of $\Omega$.  $Q$ is the unit cube in $\R^N$ centered at the origin and with normals to its faces parallel to the vectors in the standard orthonormal basis of $\R^N$, 
$\{e_1,\cdots,e_N\}$, {i.e.},
$$Q:=\Big(-\frac{1}{2},\frac{1}{2}\Big)^N.$$
Given $1<p<+\infty$, we denote by $p'$ its conjugate exponent, that is 
$$\frac{1}{p}+\frac{1}{p'}=1.$$
Whenever a map $v\in L^p, C^{\infty},\cdots$, is $Q-$periodic, that is
$$v(x+e_i)=v(x)\quad i=1,\cdots, N$$
for a.e. $x\in \R^N$, we write $v\in L^p_{\rm per}, C^{\infty}_{\rm per},\cdots$, respectively. We will implicitly identify the spaces $L^p(Q)$ and $L^p_{\rm per}(\R^N)$. We designate the Lebesgue measure of a measurable set $A\subset\R^N$ by $|A|$.
We adopt the convention that $C$ will stand for a generic positive constant, whose value may change from expression to expression in the same formula.

\subsection{Two-scale convergence}
We recall the definition  and some properties of two-scale convergence which apply to our framework. For a detailed treatment of the topic we refer to, {e.g.}, \cite{allaire, lukkassen.nguetseng.wall, nguetseng}. Throughout this subsection $1<p<+\infty$.
\begin{definition}
\label{def:2-scale}
If $v\in L^p(\Omega;L^p_{\rm per}(\Rn;\rd))$ and $\{u_{\ep}\}\in L^p(\Omega;\rd)$, we say that $\{u_{\ep}\}$ \emph{weakly two-scale converge to $v$} in $L^p(\Omega\times Q;\rd)$, $u_{\ep}\wkts v$,  if
$$\iO u_{\ep}(x)\cdot \varphi\Big(x,\frac{x}{\ep}\Big)\,dx\to \iOQ v(x,y)\cdot \varphi(x,y)\,dy\,dx$$
for every $\varphi\in L^{p'}(\Omega;C_{\rm per}(\Rn;\rd))$. 

We say that $\{u_{\ep}\}$ \emph{strongly two-scale converge to $v$} in $L^p(\Omega\times Q;\rd)$, $u_{\ep}\sts v$, if $u_{\ep}\wkts v$ and $$\lim_{\ep\to 0}\|u_{\ep}\|_{L^p(\Omega;\rd)}=\|v\|_{L^p(\Omega\times Q;\rd)}.$$
\end{definition}
Bounded sequences in $L^p(\Omega;\rd)$ are pre-compact with respect to weak two-scale convergence. To be precise (see \cite[Theorem 1.2]{allaire}),
\begin{proposition}
\label{prop:2-scale-compactness}
Let $\{u_{\ep}\}\subset L^p(\Omega;\rd)$ be bounded. Then there exists $v\in L^p(\Omega;L^p_{\rm per}(\Rn;\rd))$ such that, up to a subsequence, $u_{\ep}\wkts v$ weakly two-scale, and, in particular,
$$u_{\ep}\wk u:=\iQ v(x,y)\,dy\quad\text{weakly in }L^p(\Omega;\rd).$$
\end{proposition}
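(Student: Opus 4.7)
The plan is to follow the classical Nguetseng--Allaire argument, viewing two-scale convergence as the weak-$\ast$ convergence of an induced family of linear functionals on a suitable separable space of admissible test functions, and then identifying the limit functional with an element of $L^p(\Omega\times Q;\R^d)$ via duality.

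First I would introduce, for each $\varepsilon$, the linear functional
\[
T_\varepsilon(\varphi):=\int_\Omega u_\varepsilon(x)\cdot\varphi\!\left(x,\tfrac{x}{\varepsilon}\right)\,dx
\qquad\text{for }\varphi\in L^{p'}(\Omega;C_{\rm per}(\R^N;\R^d)).
\]
The key preliminary estimate to establish is that for every such admissible $\varphi$ the map $x\mapsto \varphi(x,x/\varepsilon)$ lies in $L^{p'}(\Omega;\R^d)$ with
\[
\lim_{\varepsilon\to 0}\bigl\|\varphi(\cdot,\cdot/\varepsilon)\bigr\|_{L^{p'}(\Omega;\R^d)}
=\|\varphi\|_{L^{p'}(\Omega\times Q;\R^d)}.
\]
This is the standard ``periodic unfolding/oscillation lemma'' and is proved by approximating $\varphi$ by finite sums $\sum_k a_k(x)\,b_k(y)$ with $a_k\in C_c(\Omega)$ and $b_k\in C_{\rm per}(\R^N;\R^d)$, applying the Riemann--Lebesgue type fact $b_k(\cdot/\varepsilon)\wk \int_Q b_k$ weakly in $L^{p'}_{\rm loc}$, and using the density of such products in $L^{p'}(\Omega;C_{\rm per})$.

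Combined with H\"older's inequality and the $L^p$-boundedness of $\{u_\varepsilon\}$, this yields the uniform bound
\[
|T_\varepsilon(\varphi)|\le C\,\|\varphi\|_{L^{p'}(\Omega;C_{\rm per}(\R^N;\R^d))}.
\]
Since $L^{p'}(\Omega;C_{\rm per}(\R^N;\R^d))$ is a separable Banach space, Banach--Alaoglu lets me extract a subsequence (not relabelled) and a continuous linear functional $T$ on this space such that $T_\varepsilon(\varphi)\to T(\varphi)$ for every admissible $\varphi$. The bound above, together with the limit identity on the norms of test functions, shows $|T(\varphi)|\le C\|\varphi\|_{L^{p'}(\Omega\times Q;\R^d)}$, so $T$ extends by density (admissible $\varphi$ are dense in $L^{p'}(\Omega\times Q;\R^d)$) to a bounded linear functional on $L^{p'}(\Omega\times Q;\R^d)$. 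By the Riesz representation theorem there exists a unique $v\in L^p(\Omega\times Q;\R^d)\cong L^p(\Omega;L^p_{\rm per}(\R^N;\R^d))$ with
\[
T(\varphi)=\int_\Omega\!\int_Q v(x,y)\cdot\varphi(x,y)\,dy\,dx,
\]
which is precisely the assertion $u_\varepsilon\wkts v$.

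For the consequence on the weak $L^p$-limit I would test against $\varphi\in L^{p'}(\Omega;\R^d)$, viewed as a $y$-independent element of $L^{p'}(\Omega;C_{\rm per}(\R^N;\R^d))$. Then $T_\varepsilon(\varphi)=\int_\Omega u_\varepsilon\cdot\varphi\,dx$ converges to $\int_\Omega\int_Q v(x,y)\cdot\varphi(x)\,dy\,dx=\int_\Omega\big(\int_Q v(x,y)\,dy\big)\cdot\varphi(x)\,dx$, which identifies the weak $L^p$-limit of $u_\varepsilon$ as $\int_Q v(\cdot,y)\,dy$. The main obstacle I foresee is the density/approximation step needed to pass the functional $T$ from the nonstandard space $L^{p'}(\Omega;C_{\rm per}(\R^N;\R^d))$ to the full space $L^{p'}(\Omega\times Q;\R^d)$; handling the joint measurability of $\varphi(x,x/\varepsilon)$ for merely continuous-in-$y$ functions is the one technical point requiring care, and is most cleanly dispatched by the tensor-product approximation mentioned above.
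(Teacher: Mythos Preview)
Your proposal is correct and follows the classical Nguetseng--Allaire compactness argument. Note, however, that the paper does not supply its own proof of this proposition: it is stated as a known preliminary result with a reference to \cite[Theorem 1.2]{allaire}, so there is no in-paper proof to compare against. Your sketch is essentially the standard argument from the cited literature (adapted to general $1<p<+\infty$ as in \cite{lukkassen.nguetseng.wall}).
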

The following result will play a key role throughout the paper in the proofs of limsup inequalities (see \cite[Lemma 1.3]{allaire}, \cite[Lemma 2.1]{visintin1}, and \cite[Proposition 2.4, Lemma 2.5 and Remark 2.6]{fonseca.kromer}).
\begin{proposition}
\label{prop:simple-2-scale}
Let $v\in L^p(\Omega;C_{\rm per}(\Rn;\rd))$ or $v\in L^p_{\rm per}(\Rn;C(\overline{\Omega};\rd))$. Then the sequence $\{u_{\ep}\}$, defined as
$$u_{\ep}(x):=v\Big(x,\frac{x}{\ep}\Big)$$
is $p-$equiintegrable, and
$$u_{\ep}\sts v\quad\text{strongly two-scale in }L^p(\Omega;\rd).$$ 
\end{proposition}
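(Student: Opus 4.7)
The plan is to prove the three assertions — well-definedness of $u_\e(x):=v(x,x/\e)$, $p$-equiintegrability, and strong two-scale convergence — by first reducing to a dense subclass of admissible test functions where everything is computable by the classical periodic Riemann--Lebesgue lemma, and then bootstrapping via standard approximation.

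First I would treat the case $v\in L^p(\Omega; C_{\rm per}(\R^N;\R^d))$. Bochner measurability of $v$ produces simple approximants of the form $v^{(k)}(x,y)=\sum_{i=1}^{n_k}\chi_{E_i^k}(x)\,b_i^k(y)$ with $b_i^k\in C_{\rm per}(\R^N;\R^d)$ and $\|v-v^{(k)}\|_{L^p(\Omega;C_{\rm per})}\to 0$. For each $v^{(k)}$ the trace along the diagonal, $x\mapsto v^{(k)}(x,x/\e)$, is visibly Borel measurable, and the pointwise bound $|v(x,x/\e)|\le \|v(x,\cdot)\|_{C_{\rm per}}$ together with the fact that $\|v(\cdot,\cdot)\|_{C_{\rm per}}\in L^p(\Omega)$ gives both measurability of $u_\e$ by passage to the limit and the $L^p$-domination needed for $p$-equiintegrability (in fact a dominated $L^p$ bound, which is strictly stronger).

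Next, for weak two-scale convergence, I would fix $\varphi\in L^{p'}(\Omega;C_{\rm per}(\R^N;\R^d))$ and show
\[
\iO v(x,x/\e)\cdot\varphi(x,x/\e)\,dx\to\iOQ v(x,y)\cdot\varphi(x,y)\,dy\,dx.
\]
On simple separable tensors $g(x)b(x/\e)$ with $g\in L^1(\Omega)$ and $b\in C_{\rm per}$, this is the classical Riemann--Lebesgue identity $\iO g(x)b(x/\e)\,dx\to\iO g(x)\,dx\iQ b(y)\,dy$; linearity extends it to the dense subclass of simple functions in $L^1(\Omega;C_{\rm per})$, and a three-$\eta$ argument closes the gap using the approximants $v^{(k)}\cdot\varphi^{(k)}$ in $L^1(\Omega;C_{\rm per})$ (the product of an $L^p$- and $L^{p'}$-valued Bochner function lies in $L^1$-valued one, and the sup-norm bounds on $\varphi$ and on the tails $v-v^{(k)}$ control the error uniformly in $\e$ via Hölder).

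To upgrade to strong two-scale convergence it suffices to check $\|u_\e\|_{L^p(\Omega;\R^d)}\to\|v\|_{L^p(\Omega\times Q;\R^d)}$. Apply the same Riemann--Lebesgue mechanism to the scalar function $(x,y)\mapsto|v(x,y)|^p$, which belongs to $L^1(\Omega;C_{\rm per}(\R^N))$ because $y\mapsto|v(x,y)|^p$ is continuous periodic and $\|\,|v|^p\|_{C_{\rm per}}=\|v\|_{C_{\rm per}}^p\in L^1(\Omega)$; this yields $\iO|v(x,x/\e)|^p\,dx\to\iOQ|v(x,y)|^p\,dy\,dx$, which is exactly the norm convergence required by Definition \ref{def:2-scale}. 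The dual case $v\in L^p_{\rm per}(\R^N;C(\overline{\Omega};\R^d))$ is handled by the symmetric argument, approximating by simple functions of the form $\sum\chi_{F_i}(y)\,c_i(x)$ with $c_i\in C(\overline\Omega)$, and using that $|v(x,x/\e)|$ is dominated by $\|v(x/\e,\cdot)\|_{C(\overline{\Omega})}$, whose $L^p$-norm on $\Omega$ converges (again by Riemann--Lebesgue in the $y$-variable) to $\|v\|_{L^p_{\rm per}(C)}$, providing the equiintegrability and norm-convergence needed.

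The main obstacle I foresee is the measurability/admissibility step: one must make sure that the evaluation $(x,\e)\mapsto v(x,x/\e)$ is well defined and measurable despite $v$ being only in a Bochner space, and that the Riemann--Lebesgue limit passes to the limit uniformly in $\e$ when $v$ is replaced by its simple-function approximants. This is precisely where the choice of the two candidate regularity classes — $L^p(\Omega;C_{\rm per})$ or $L^p_{\rm per}(C(\overline{\Omega}))$ — is crucial, since one of the two variables must supply continuity to absorb the oscillation while the other supplies $L^p$-integrability; and it is exactly this admissibility that allows the strong two-scale statement to hold, in contrast to the generic bounded $L^p$ situation where only weak two-scale compactness (Proposition \ref{prop:2-scale-compactness}) is available.
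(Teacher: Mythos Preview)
The paper does not actually prove this proposition: it is stated as a preliminary result with references to \cite[Lemma 1.3]{allaire}, \cite[Lemma 2.1]{visintin1}, and \cite[Proposition 2.4, Lemma 2.5 and Remark 2.6]{fonseca.kromer}. Your proposal is essentially the standard argument found in those references (density of simple Bochner functions, the pointwise bound $|u_\e(x)|\le \|v(x,\cdot)\|_{C_{\rm per}}$ for domination, and the periodic Riemann--Lebesgue lemma applied both to $v\cdot\varphi$ and to $|v|^p$), so there is nothing to compare against.

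One small imprecision worth flagging: in the second case $v\in L^p_{\rm per}(\R^N;C(\overline{\Omega};\R^d))$, the majorant $g(y):=\|v(y,\cdot)\|_{C(\overline\Omega)}^p$ lies only in $L^1_{\rm per}$, and your claim that convergence of $\int_\Omega g(x/\e)\,dx$ ``provides the equiintegrability'' is not quite the right justification---convergence of total mass does not by itself exclude concentration. The correct step is the usual splitting $g=g_1+g_2$ with $g_1\in L^\infty$ and $\|g_2\|_{L^1(Q)}$ small, and then using periodicity to bound $\int_\Omega |g_2(x/\e)|\,dx\le C(\Omega)\|g_2\|_{L^1(Q)}$ uniformly in small $\e$; this is exactly what is done in \cite[Remark 2.6]{fonseca.kromer}. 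With that adjustment your outline is complete.
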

\subsection{The unfolding operator}
\label{subsection:unfolding}
We collect here the definition and some properties of the \emph{unfolding operator} (see {e.g.} \cite{cioranescu.damlamian.griso, cioranescu.damlamian.griso08, visintin, visintin1}).
\begin{definition}
Let $u\in L^p(\Omega;\rd)$. For every $\ep>0$ the unfolding operator
 $T_{\e}:L^p(\Omega;\rd)\to L^p(\R^N;L^p_{\rm per}(\Rn;\rd))$ is defined componentwise as
\be{eq:unfolding-operator}
T_{\e}(u)(x,y):=u\Big(\e\floor[\Big]{\frac{x}{\e}}+\e(y-\floor{y})\Big)\quad\text{for a.e. }x\in\Omega\text{ and }y\in\Rn,\ee
where $u$ is extended by zero outside $\Omega$ and $\floor{\cdot}$ denotes the least integer part. 
\end{definition}
\begin{proposition}
\label{prop:isometry}
$T_{\ep}$ is a nonsurjective linear isometry from $L^p(\Omega;\rd)$ to $L^p(\R^N\times Q;\rd)$.
\end{proposition}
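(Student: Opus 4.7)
The plan is to verify the three claims—linearity, isometry, and non-surjectivity—one at a time, all resting on the observation that on each $\e$-cell $C_k^\e:=\e k+[0,\e)^N$ (with $k\in\Z^N$) the function $x\mapsto \e\floor{x/\e}$ is constant equal to $\e k$, so that $T_\e$ essentially rearranges each cell's worth of values of $u$ into the $y$-variable.

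Linearity is immediate from \eqref{eq:unfolding-operator}, since $T_\e$ is the composition of the fixed measurable map $(x,y)\mapsto \e\floor{x/\e}+\e(y-\floor{y})$ with $u$, and such a composition is linear in $u$. For the isometry, I would fix $k\in\Z^N$ and, for $x\in C_k^\e$, perform the change of variables $z=\e k+\e(y-\floor{y})$ in the inner integral over $y\in Q$. Because $y\mapsto y-\floor{y}$ is a measure-preserving bijection from $Q$ onto $[0,1)^N$ (modulo a null set of coordinate hyperplanes), the Jacobian factor is $\e^N$ and $z$ ranges over $C_k^\e$, so
$$\int_{C_k^\e}\int_Q|T_\e(u)(x,y)|^p\,dy\,dx=|C_k^\e|\,\e^{-N}\int_{C_k^\e}|u(z)|^p\,dz=\int_{C_k^\e}|u(z)|^p\,dz.$$
Summing over $k\in\Z^N$, together with the convention that $u$ is extended by zero outside $\Omega$, would yield $\|T_\e u\|_{L^p(\R^N\times Q;\R^d)}^p=\|u\|_{L^p(\Omega;\R^d)}^p$.

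For nonsurjectivity, I would note that for every $u$ and every fixed $y\in Q$ the section $x\mapsto T_\e(u)(x,y)$ is constant on each cell $C_k^\e$. Hence any $v\in L^p(\R^N\times Q;\R^d)$ whose $x$-dependence is not a.e.\ constant on the $\e$-lattice—for instance a nontrivial tensor product $v(x,y)=\varphi(x)\psi(y)$ with $\varphi\in C_c^\infty(\R^N)$ non-constant on some $C_k^\e$ and $\psi\not\equiv 0$—cannot lie in the range of $T_\e$.

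The only delicate point is ensuring that the substitution $y\mapsto y-\floor{y}$ on $Q=(-1/2,1/2)^N$ is a legitimate change of variables: this is routine because the map is a measure-preserving bijection from $Q$ onto $[0,1)^N$ up to a null set, so no genuine obstacle arises and the entire argument reduces to Fubini plus this single translation by $\e k$.
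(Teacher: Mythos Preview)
Your argument is correct in all three parts. The cell-by-cell computation for the isometry, with the change of variables $z=\e k+\e(y-\floor{y})$ and the observation that $|C_k^\e|=\e^N$ exactly cancels the Jacobian factor $\e^{-N}$, is the standard verification; the non-surjectivity via the constancy of $x\mapsto T_\e u(x,y)$ on each cell $C_k^\e$ is also the usual argument.

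Note, however, that the paper does not actually give a proof of this proposition: it is stated as a known fact, with implicit reference to the unfolding-operator literature (Cioranescu--Damlamian--Griso, Visintin). Your direct computation is essentially what one finds in those references, so there is nothing to contrast.
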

The next theorem relates the notion of two-scale convergence to $L^p$ convergence of the unfolding operator (see \cite[Proposition 2.5 and Proposition 2.7]{visintin1},\cite[Theorem 10]{lukkassen.nguetseng.wall}).
\begin{theorem}
\label{thm:equivalent-two-scale}
Let $\Omega$ be an open bounded domain and let $v\in L^p(\Omega;L^p_{\rm per}(\R^N;\R^d))$. Assume that $v$ is extended to be $0$ outside $\Omega$. Then the following conditions are equivalent:
\begin{enumerate} 
\item[(i)]$u^{\ep}\wkts v\quad\text{weakly two scale in }L^p(\Omega\times Q;\rd),$
\item[(ii)] $T_{\ep}u_{\ep}\wk v$ weakly in $L^p(\R^N\times Q;\rd)$.
\end{enumerate}
Moreover, $$u^{\ep}\sts v\quad\text{strongly two scale in }L^p(\Omega\times Q;\rd)$$
if and only if
$$T_{\ep}u_{\ep}\to v\quad\text{strongly in }L^p(\R^N\times Q;\rd).$$
\end{theorem}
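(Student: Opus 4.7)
The plan is to deduce the equivalence from three ingredients: the isometry of Proposition \ref{prop:isometry}, a ``commutation identity'' relating the two pairings on test functions, and the uniform convexity of $L^p$.

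First I would establish the core identity: for every $u\in L^p(\Omega;\rd)$ (extended by $0$ outside $\Omega$) and every admissible test function $\varphi\in L^{p'}(\Omega;C_{\rm per}(\Rn;\rd))$,
\[
\int_\Omega u(x)\cdot \varphi\Big(x,\frac{x}{\ep}\Big)\,dx-\int_{\R^N}\int_Q T_{\ep}u(x,y)\cdot \varphi(x,y)\,dy\,dx\to 0.
\]
To prove it I would partition $\R^N$ into the $\ep$-cubes $\ep(k+[0,1)^N)$, $k\in\Z^N$, on each of which $T_{\ep}u(\cdot,y)$ is constant in $x$ and equal to $u(\ep k+\ep(y-\lfloor y\rfloor))$. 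The change of variable $z=\ep k+\ep(y-\lfloor y\rfloor)$ rewrites the right-hand integral as a Riemann-type approximation, of mesh size $\ep$, of the left-hand integral; the difference is then dominated by the modulus of continuity of $\varphi(\cdot,y)$ on $\overline{\Omega}$ (after a density reduction from $L^{p'}(\Omega;C_{\rm per})$ to $C(\overline{\Omega};C_{\rm per})$), up to a vanishing boundary-layer contribution of thickness $\ep$.

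With the identity available, the weak equivalence follows quickly. If $u_\ep\wkts v$, then $\{u_\ep\}$ is bounded in $L^p(\Omega;\rd)$, and by Proposition \ref{prop:isometry} so is $\{T_\ep u_\ep\}$ in $L^p(\R^N\times Q;\rd)$; up to a subsequence $T_\ep u_\ep\wk \tilde v$, and the identity combined with the definition of weak two-scale convergence gives $\int \tilde v\cdot \varphi=\int v\cdot \varphi$ for every admissible $\varphi$. Because $T_\ep u_\ep$ is supported in an $\ep$-neighborhood of $\Omega$, $\tilde v$ vanishes outside $\Omega\times Q$, and density of admissible test functions in $L^{p'}$ forces $\tilde v=v$ a.e.; uniqueness of the limit upgrades subsequential to full-sequence convergence. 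The converse direction is a direct application of the same identity starting from $T_\ep u_\ep\wk v$. For the strong part, the isometry yields $\|T_\ep u_\ep\|_{L^p(\R^N\times Q)}=\|u_\ep\|_{L^p(\Omega)}$, so the norm condition in Definition \ref{def:2-scale} is equivalent to $\|T_\ep u_\ep\|_{L^p}\to\|v\|_{L^p(\R^N\times Q)}$ (using the zero extension of $v$). Combining with the weak equivalence and the uniform convexity of $L^p$ for $1<p<+\infty$, strong two-scale convergence of $u_\ep$ to $v$ is equivalent to strong $L^p$-convergence of $T_\ep u_\ep$ to $v$.

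The main obstacle is the core identity in the first paragraph: the translated points $\ep k+\ep(y-\lfloor y\rfloor)$ may exit $\Omega$ for $x$ within an $\ep$-layer of $\partial\Omega$, where $u$ is set to zero by extension; the resulting asymmetry between the two pairings must be absorbed into a boundary-layer error shown to vanish in $L^p$, using the $L^p$-bound on $\{u_\ep\}$ and the finiteness of $|\partial\Omega|$-collar. One also needs a uniform modulus of continuity for $\varphi$ in the first variable, which requires the density step from $L^{p'}(\Omega;C_{\rm per})$ to $C(\overline{\Omega};C_{\rm per})$ together with an approximation bound uniform in $\ep$.
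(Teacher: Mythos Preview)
The paper does not give its own proof of this theorem: it is quoted as a known result, with references to Visintin and to Lukkassen--Nguetseng--Wall, and no argument is supplied. Your sketch is precisely the standard proof found in those references---the commutation identity between the oscillating pairing and the unfolded pairing (with error controlled by the $x$-modulus of continuity of the test function and the $L^p$ bound on the sequence), followed by compactness/identification for the weak part and isometry plus uniform convexity of $L^p$ for the strong part---and it is correct. The only point worth making explicit is that the ``core identity'' must hold \emph{uniformly} for $u$ in bounded sets of $L^p$ (not just for each fixed $u$), since you then apply it with $u=u_\ep$; your modulus-of-continuity bound does give this uniformity once $\varphi\in C(\overline{\Omega};C_{\rm per})$, and the density reduction then closes the argument as you indicate.
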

The following proposition is proved in \cite[Proposition A.1]{fonseca.kromer}.
\begin{proposition}
\label{prop:conv-unf-op}
If $u\in L^p(\Omega;\R^d)$ (extended by $0$ outside $\Omega$) then
$$\|u-T_{\ep}u\|_{L^p(\R^N\times Q;\R^d)}\to 0$$
as $\ep\to 0$.
\end{proposition}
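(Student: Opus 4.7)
The plan is to establish the convergence first on a dense subclass of $L^p(\Omega;\R^d)$ and then extend to the general case by exploiting that $T_\ep$ is a linear isometry with operator norm $1$ (Proposition~\ref{prop:isometry}). The key pointwise observation is that for $y\in Q$ the unfolded argument $\tau_\ep(x,y):=\ep\lfloor x/\ep\rfloor+\ep(y-\lfloor y\rfloor)$ satisfies $|\tau_\ep(x,y)-x|\leq 2\ep\sqrt{N}$, because both $x-\ep\lfloor x/\ep\rfloor$ and $\ep(y-\lfloor y\rfloor)$ lie in $[0,\ep)^N$.

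\textbf{Smooth step.} First I would fix $\phi\in C_c(\Omega;\R^d)$, set $K:=\mathrm{supp}(\phi)$, and $\delta:=\mathrm{dist}(K,\partial\Omega)>0$. For every $\ep<\delta/(2\sqrt{N})$, whenever one of $x$ or $\tau_\ep(x,y)$ belongs to $K$ both points remain in $\Omega$, so the zero extension of $\phi$ outside $\Omega$ plays no role. By uniform continuity of $\phi$ the modulus $\omega(\ep):=\sup\{|\phi(x+h)-\phi(x)|:x\in\R^N,\ |h|\leq 2\ep\sqrt{N}\}$ vanishes as $\ep\to 0^+$. Since $T_\ep\phi-\phi$ is supported in a fixed bounded neighborhood of $K$ and is pointwise bounded by $\omega(\ep)$, one obtains $\|T_\ep\phi-\phi\|_{L^p(\R^N\times Q;\R^d)}\to 0$.

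\textbf{Density and conclusion.} Given $\eta>0$, by density of $C_c(\Omega;\R^d)$ in $L^p(\Omega;\R^d)$ I would pick $\phi\in C_c(\Omega;\R^d)$ with $\|u-\phi\|_{L^p(\Omega;\R^d)}<\eta$ and split via the triangle inequality
$$\|u-T_\ep u\|_{L^p(\R^N\times Q;\R^d)}\leq \|u-\phi\|_{L^p(\R^N\times Q;\R^d)}+\|\phi-T_\ep\phi\|_{L^p(\R^N\times Q;\R^d)}+\|T_\ep(\phi-u)\|_{L^p(\R^N\times Q;\R^d)}.$$
The first term equals $\|u-\phi\|_{L^p(\Omega;\R^d)}$ since $|Q|=1$ and $u,\phi$ vanish outside $\Omega$; the third term equals $\|\phi-u\|_{L^p(\Omega;\R^d)}$ by the isometry property of Proposition~\ref{prop:isometry}; and the middle term tends to zero by the smooth step. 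Therefore $\limsup_{\ep\to 0^+}\|u-T_\ep u\|_{L^p(\R^N\times Q;\R^d)}\leq 2\eta$, and letting $\eta\to 0^+$ gives the conclusion.

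No genuine obstacle arises; the only subtlety is the treatment of the zero extension outside $\Omega$ in the definition of $T_\ep$, which is handled by approximating $u$ by functions supported strictly inside $\Omega$ so that for small $\ep$ the extension is irrelevant on the set which actually contributes to $\phi-T_\ep\phi$.
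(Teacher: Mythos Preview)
Your proof is correct. The paper does not actually prove this proposition; it simply cites \cite[Proposition A.1]{fonseca.kromer}, so there is no in-paper argument to compare against. Your approach---establishing the result first for $\phi\in C_c(\Omega;\R^d)$ via uniform continuity and the pointwise estimate $|\tau_\ep(x,y)-x|\leq C\ep$, then extending by density using the isometry property of $T_\ep$---is the standard one and is essentially what appears in the cited reference. One minor simplification: since $\phi\in C_c(\Omega;\R^d)$ extended by zero is already in $C_c(\R^N;\R^d)$ and hence globally uniformly continuous, the discussion of the zero extension and the role of $\delta=\mathrm{dist}(K,\partial\Omega)$ is not strictly needed; the modulus-of-continuity bound holds directly on all of $\R^N$.
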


 \section{Characterization of limits of $\pdeep{\ep}$-vanishing fields}
 \label{section:pde}
 Let $1<p<+\infty$, and for every $\ep>0$ denote by $\pdeep{\e}:L^p(\Omega;\rd)\to W^{-1,p}(\Omega;\rl)$ the first order differential operator
\be{eq:def-pde-epsilon}
\pdeep{\e}u:=\sum_{i=1}^N\frac{\partial}{\partial {x_i}}\Big(A^i\Big(\frac{x}{\e}\Big)u(x)\Big)
\ee
for $u\in L^p(\Omega;\R^d)$.
In this section we focus on the case in which the operators $A^i$ are smooth and $Q-$periodic, $A^i\in C^{\infty}_{\rm per}(\Rn;\M^{l\times d})$, for all $i=1,\cdots,N$. We will also require that $N\times l=d$, and for every $y\in\R^N$ the operator $\mathcal{A}(y)\in{\rm Lin}\,(\R^d;\R^d)$, defined as
$$\mathcal{A}(y)\xi:=\left(\begin{array}{c}(A^1(y)\xi)^T\\\vdots\\(A^N(y)\xi)^T\end{array}\right)\in \M^{N\times l}\cong \R^{d}\quad\text{for every }\xi\in\R^d,$$
satisfies the uniform ellipticity condition
\be{eq:A-invertibility}
\mathcal{A}(y)\lambda\cdot\lambda\geq\gamma|\lambda|^2\quad\text{for every }\lambda\in\rd\text{ and }y\in\R^N
\ee
where $\gamma>0$ is a positive constant.

We first {prove} a corollary of \cite[Lemma 2.14]{fonseca.muller}.
\begin{lemma}
\label{lemma:div}
Let $1<p<+\infty$ and consider the differential operator
$${\rm div}:\,L^p(Q;\R^{N})\to W^{-1,p}(Q)$$
defined as
$${\rm div }\,R:=\sum_{i=1}^N\frac{\partial R_{i}(y)}{\partial y_i}\quad\text{for every }R\in L^p(Q;\R^{N}).$$
Then, there exists an operator
$$\mathbb{T}:L^p(Q;\R^N)\to L^p(Q;\R^N)$$
such that
\begin{enumerate}
\item[(P1)]$\mathbb{T}$ is linear and bounded, and vanishes on constant maps,
\item[(P2)]$\mathbb{T}\circ\mathbb{T} R=\mathbb{T}R\text{ and }{\rm div }\,(\mathbb{T}R)=0\quad\text{for every }R\in L^p(Q;\R^{N})$,
\item[(P3)]$\text{there exists a constant }C=C(p)>0\text{ such that }$
$$\|R-\mathbb{T}R\|_{L^p(Q;\R^{N})}\leq C\|{\rm div}R\|_{W^{-1,p}(Q)},$$
$\text{for all }R\in L^p(Q;\R^{N})\text{ with }\iQ R(y)\,dy=0,$
\item[(P4)] if $\{v^n\}$ is bounded in $L^p(\Omega\times Q;\R^N)$ and $p-$equiintegrable in $\Omega\times Q$, then setting $w^n(x,\cdot):=\mathbb{T}v^n(x,\cdot)$ for a.e. $x\in\Omega$, the sequence $\{w^n\}$ is $p-$equiintegrable in $\Omega\times Q$,
\item[(P5)] if $\psi\in C^{1}(\Omega;C^{1}_{per}(\R^N;\R^{N}))\cap W^{2,2}(\Omega;W^{2,2}_{per}(\R^N;\R^{N}))$ then setting $\varphi(x,\cdot):=\mathbb{T}\psi(x,\cdot)$ for every $x\in\R^N$, there holds $\varphi\in C^{1}(\Omega;C^{1}_{per}(\R^N;\R^{N}))$.
\end{enumerate}
\end{lemma}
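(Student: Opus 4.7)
The plan is to realize the lemma as a direct corollary of \cite[Lemma 2.14]{fonseca.muller}, which constructs a projection onto the kernel of a constant-rank, constant-coefficient first-order operator on the torus. The divergence operator on $L^p(Q;\R^N)$ has principal symbol $\lambda \mapsto \lambda^T \in \M^{1\times N}$, which has rank exactly one for every $\lambda \in \R^N \setminus \{0\}$. Hence the constant-rank hypothesis is satisfied and the Fonseca-M\"uller result produces a linear bounded operator $\mathbb{T}$ on $L^p(Q;\R^N)$ with (P1)-(P4) built in.

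For concreteness, and to make (P5) transparent, I would identify $\mathbb{T}$ as the Leray-Helmholtz projection on the torus. Expanding in Fourier series, define
$$\mathbb{T} R(y) := \sum_{k \in \Z^N \setminus \{0\}} \Big(\hat{R}(k) - \frac{(k\cdot \hat{R}(k))\,k}{|k|^2}\Big) e^{2\pi i k \cdot y},$$
equivalently $\mathbb{T} R = R - \iQ R(y)\,dy - \nabla \Delta^{-1}({\rm div}\,R)$, where $\Delta^{-1}$ is the inverse Laplacian acting on mean-zero $Q$-periodic scalar fields. Linearity, vanishing on constants, idempotency, and ${\rm div}\,(\mathbb{T}R) = 0$ are immediate from the Fourier representation, so (P1) and (P2) hold. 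For (P3), note that whenever $\iQ R = 0$ one has $R - \mathbb{T}R = \nabla \Delta^{-1}({\rm div}\,R)$, and the composition $\nabla \Delta^{-1}$ is a zero-order Fourier multiplier with smooth symbol $-k \otimes k/|k|^2$, which is bounded from $W^{-1,p}(Q)$ into $L^p(Q;\R^N)$ by standard multiplier theory.

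Property (P4) is, in my view, the main nontrivial step: zero-order singular integrals preserve $p$-equiintegrability, but this requires a truncation/Chac\'on biting-type argument combined with the weak $(1,1)$ bound for Calder\'on-Zygmund operators. Since $\mathbb{T}$ acts only in $y$ and its operator norm is independent of $x$, the $p$-equiintegrability of $v^n$ in $\Omega\times Q$ transfers to that of $w^n(x,y) := \mathbb{T}v^n(x,\cdot)(y)$ by applying the scalar result pointwise in $x$ and integrating. Rather than redo this analysis I would invoke \cite[Lemma 2.14]{fonseca.muller} directly, where it is carried out in the required constant-rank generality.

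Finally, (P5) follows from the explicit Fourier representation. Since $\mathbb{T}$ acts only in $y$ via an $x$-independent Fourier multiplier, it commutes with differentiation in $x$; the $W^{2,2}$-regularity of $\psi$ in both variables guarantees sufficient decay of the $y$-Fourier coefficients for the series defining $\mathbb{T}\psi(x,\cdot)$ and its first $y$-derivatives to converge in $L^\infty(Q)$ for a.e. $x$. Combining with the assumed $C^1$-regularity in $x$ yields $\varphi \in C^1(\Omega;C^1_{per}(\R^N;\R^N))$. The main obstacle is (P4) as noted; the rest is bookkeeping once the Helmholtz decomposition is in hand.
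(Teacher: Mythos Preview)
Your proposal is correct and follows essentially the same approach as the paper: both invoke \cite[Lemma 2.14]{fonseca.muller} for (P1)--(P4) after checking the constant-rank condition for the divergence, and both establish (P5) via the explicit Fourier-multiplier representation of $\mathbb{T}$ (your Leray--Helmholtz projection is exactly the paper's $\PP_D(\lambda)=I-\lambda\lambda^T/|\lambda|^2$ applied to each Fourier mode), using the $W^{2,2}$-decay of the Fourier coefficients to get uniform convergence of the series and its first derivatives.
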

{
\begin{proof}
We first notice that we can rewrite the operator ${\rm div}$ as
$${\rm div }\,R:=\sum_{i=1}^N D^i\frac{\partial R(y)}{\partial y_i}\quad\text{for every }R\in L^p(Q;\R^{N}),$$
where $D^i\in \textrm{Lin }(\R^N;\R)$, $D^i_{m}:=\delta_m^i$ for every $i,m=1,\cdots,N$.

For every $\lambda\in \R^N\setminus \{0\}$, we associate to {\rm div} the linear operator
$$\mathbb{D}(\lambda):=\sum_{i=1}^N D^i\lambda_i\in {\rm Lin}\,(\R^N;\R).$$
By \cite[Remark 3.3 (iv)]{fonseca.muller} the operator ${\rm div }$ satisfies the constant rank condition \eqref{cr}.
Properties (P1), (P2) and (P3) follow directly by \cite[Lemma 2.14]{fonseca.muller}. The proof of (P4) is a straightforward adaptation of the proof of \cite[Lemma 2.14 (iv)]{fonseca.muller}. For convenience of the reader, we sketch the proof of (P5).

 Fix $\psi\in C^{1}(\Omega;C^{1}_{per}(\R^N;\R^{N}))\cap W^{2,2}(\Omega;W^{2,2}_{per}(\R^N;\R^{N}))$.
For every $\lambda\in\R^N\setminus \{0\}$, let $\PP_{D}(\lambda):\R^N\to \R^N$ be the linear projection onto ${\rm Ker}\,\mathbb{D}(\lambda)$. Writing the operator $\mathbb{T}$ explicitly (see \cite[Lemma 2.14]{fonseca.muller}), we have
 $$\mathbb{T}\psi(x,y):=\sum_{\lambda\in\mathbb{Z}^N\setminus\{0\}}\PP_D(\lambda)\hat{\psi}(x,\lambda)e^{2\pi i y\cdot\lambda}$$
 where 
 $$\hat{\psi}(x,\lambda):=\iq \psi(x,z)e^{-2\pi i z\cdot\lambda}\,dz,\quad\text{for }\lambda\in\Z^N\setminus\{0\}\quad\text{and }x\in\Omega,$$
 are the Fourier coefficients associated to $\psi(x,\cdot)$. 
 
  By the regularity of $\psi$ and by 
  Plancherel's theorem for Fourier series, we obtain that
 $$\Bigg(4\pi^2\sum_{\lambda\in \Z^N\setminus \{0\}}|\lambda_i|^2|\hat{\psi}(x,\lambda)|^2\Bigg)^{\tfrac{1}{2}}=\Big\|\frac{\partial\psi(x,y)}{\partial y_i}\Big\|_{L^2(Q;\R^N)}\leq C,\quad\text{for every }x\in\Omega,$$
for $i=1,\cdots,N$.
  Therefore, by \cite[Proposition 2.7]{fonseca.muller} and by Cauchy-Schwartz's inequality, for every $n\in\mathbb{N}$ there holds
  \begin{align*}
  &\sum_{\lambda\in\mathbb{Z}^N\setminus\{0\},\,|\lambda|\geq n}\Big|\PP_D(\lambda)\hat{\psi}(x,\lambda)e^{2\pi i y\cdot\lambda}\Big|\leq C\sum_{\lambda\in\mathbb{Z}^N\setminus\{0\},\,|\lambda|\geq n}|\hat{\psi}(x,\lambda)|\\
  &\quad\leq C\Big(\sum_{\lambda\in\mathbb{Z}^N\setminus\{0\}}|\hat{\psi}(x,\lambda)|^2|\lambda|^2\Big)^{\frac{1}{2}}\Big(\sum_{\lambda\in\mathbb{Z}^N\setminus\{0\},\,|\lambda|\geq n}\frac{1}{|\lambda|^2}\Big)^{\frac{1}{2}}\\
  &\quad\leq C\Big(\sum_{\lambda\in\mathbb{Z}^N\setminus\{0\},\,|\lambda|\geq n}\frac{1}{|\lambda|^2}\Big)^{\frac{1}{2}}.
  \end{align*}
  The previous inequality implies that for every $n$ the tail of the series of functions 
  \be{eq:series-of-functions}\sum_{\lambda\in\mathbb{Z}^N\setminus\{0\}}\PP_D(\lambda)\hat{\psi}(x,\lambda)e^{2\pi i y\cdot\lambda}\ee
  is uniformly bounded by the tail of a convergent series. This yields the uniform convergence of the series in \eqref{eq:series-of-functions}
  and hence the continuity of the map $\mathbb{T}\psi(x,y)$. The differentiability of $\mathbb{T}\psi(x,y)$ follows by an analogous argument.
  \end{proof}}
  Using the previous result we can prove the following projection lemma.
\begin{lemma}
\label{lemma:projection}
Let $1<p<+\infty$, let $A^i\in L^{\infty}(Q;\M^{l\times d}),$ $i=1,\cdots,N,$ with ${\mathcal{A}}$ satisfying the 
 invertibility requirement in \eqref{eq:A-invertibility}. 
Let $\{v^n\},v\subset L^p(\Omega\times Q;\rd)$ be such that
\begin{eqnarray}
\label{eq:pde-0}
&&v^n\wk v\quad\text{weakly in }L^p(\Omega\times Q;\rd),\\
\label{eq:pde-1}
&&\sum_{i=1}^N\frac{\partial}{\partial {x_i}}\Big(\iQ A^i(y)v^n(x,y)\,dy\Big)\to 0\quad\text{strongly in }W^{-1,p}(\Omega;\rl),\\
\label{eq:pde-2}
&&\sum_{i=1}^N\frac{\partial}{\partial {y_i}}\Big(A^i(y)v^n(x,y)\Big)\to 0\quad\text{strongly in }L^p(\Omega;W^{-1,p}(Q;\R^l)).
\end{eqnarray}
Then there exists a subsequence $\{v^{n_k}\}$ and a sequence $\{w^k\}\subset L^p(\Omega\times Q;\rd)$ such that
\begin{eqnarray}
\label{eq:lp-distance}
&&v^{n_k}-w^k\to 0\quad\text{strongly in }L^p(\Omega\times Q;\rd),\\
\label{eq:pde-satisfied-1}
&&\sum_{i=1}^N \frac{\partial}{\partial {x_i}}\Big(\iQ A^i(y)w^k(x,y)\,dy\Big)=0\quad\text{in }W^{-1,p}(\Omega;\rl),\\
\label{eq:pde-satisfied-2}
&&\sum_{i=1}^N \frac{\partial}{\partial {y_i}}\Big(A^i(y)w^{k}(x,y)\Big)=0\quad\text{in }W^{-1,p}(Q;\rl)\text{ for a.e. }x\in\Omega.
\end{eqnarray}
\end{lemma}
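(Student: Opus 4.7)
The plan is to leverage the ellipticity \eqref{eq:A-invertibility} to recast both constraints \eqref{eq:pde-1} and \eqref{eq:pde-2} as pure divergence constraints on the auxiliary field $R^n(x,y) := \mathcal{A}(y) v^n(x,y) \in \R^{N \times l} \cong \R^d$, which are then treatable by the projection of Lemma \ref{lemma:div}. Splitting $R^n = \bar R^n + R^n_{\sharp}$ with $\bar R^n(x) := \iq R^n(x,\tilde y)\,d\tilde y$ and $\iq R^n_{\sharp}(x,\tilde y)\,d\tilde y = 0$, the two constraints decouple: since $\bar R^n$ is $y$-independent, \eqref{eq:pde-2} becomes $\mathrm{div}_y R^n_{\sharp} \to 0$ strongly in $L^p(\Omega; W^{-1,p}(Q;\rl))$, while \eqref{eq:pde-1} gives $\mathrm{div}_x \bar R^n \to 0$ strongly in $W^{-1,p}(\Omega;\rl)$.

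For the $y$-divergence correction, I would fix $j \in \{1,\dots,l\}$ and work with the $\R^N$-valued field $R^{n,j}_{\sharp}(x,y) := \bigl((A^i(y)v^n(x,y))_j - \iq (A^i(\tilde y)v^n(x,\tilde y))_j\,d\tilde y\bigr)_{i=1}^N$, which is $y$-mean-zero and satisfies $\mathrm{div}_y R^{n,j}_{\sharp}(x,\cdot) \to 0$ strongly in $L^p(\Omega; W^{-1,p}(Q))$. Applying the operator $\mathbb{T}$ of Lemma \ref{lemma:div} slice-by-slice in $x$, I set $\tilde R^{n,j}_{\sharp}(x,\cdot) := \mathbb{T}(R^{n,j}_{\sharp}(x,\cdot))$; properties (P1), (P2), (P3), integrated in $x$, yield $\mathrm{div}_y \tilde R^{n,j}_{\sharp}(x,\cdot) = 0$ in $W^{-1,p}(Q)$ for a.e.~$x$, $\iq \tilde R^{n,j}_{\sharp}(x,y)\,dy = 0$, and $\|\tilde R^{n,j}_{\sharp} - R^{n,j}_{\sharp}\|_{L^p(\Omega \times Q;\R^N)} \to 0$.

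For the $x$-divergence correction, which is the main obstacle, I would invoke a right inverse of $\mathrm{div}_x : L^p(\Omega;\R^N) \to W^{-1,p}(\Omega)$, available under mild regularity of $\partial\Omega$ (e.g.\ Lipschitz, via the Bogovskii operator or the $L^p$-Helmholtz decomposition): for each $j$, I find $\phi^n_j \in L^p(\Omega;\R^N)$ with $\mathrm{div}_x \phi^n_j = -\mathrm{div}_x \bar R^{n,j}$ and $\|\phi^n_j\|_{L^p(\Omega;\R^N)} \leq C\|\mathrm{div}_x \bar R^{n,j}\|_{W^{-1,p}(\Omega)} \to 0$, and set $\tilde S^{n,j} := \bar R^{n,j} + \phi^n_j$, so that $\mathrm{div}_x \tilde S^{n,j} = 0$. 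Denoting by $\tilde R^n_{\sharp}$ and $\tilde S^n$ the reassembled $\R^{N\times l}$-valued fields and defining $w^n(x,y) := \mathcal{A}(y)^{-1}\bigl(\tilde R^n_{\sharp}(x,y) + \tilde S^n(x)\bigr)$, the ellipticity bound $\|\mathcal{A}^{-1}\|_{L^\infty(Q)} \leq \gamma^{-1}$ gives \eqref{eq:lp-distance} from the two estimates above; \eqref{eq:pde-satisfied-2} follows because $\mathrm{div}_y \mathcal{A}(y)w^n = \mathrm{div}_y(\tilde R^n_{\sharp} + \tilde S^n) = 0$; and \eqref{eq:pde-satisfied-1} follows because $\iq \mathcal{A}(y)w^n(x,y)\,dy = \tilde S^n(x)$ (by the zero $y$-mean of $\tilde R^n_{\sharp}$), whose $x$-divergence vanishes by construction. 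A diagonal subsequence extraction (trivially the full sequence works) then yields $\{w^k\}$ with the required properties. The role of the ellipticity assumption \eqref{eq:A-invertibility} is precisely to absorb $\mathcal{A}(y)$ into the unknown so that the coupled system decouples into two independent pure divergence problems; without it, the periodic tools of Lemma \ref{lemma:div} do not handle the $x$-part, which is exactly the difficulty pointed out in the introduction regarding the failure of the pseudo-differential approach of \cite{santos} in the coupled setting.
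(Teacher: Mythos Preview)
Your strategy is the same as the paper's at the level of the key idea: use the uniform ellipticity to replace $v^n$ by $R^n := \mathcal{A}(y)v^n$, split $R^n$ into its $y$-mean $\bar R^n$ and oscillating part $R^n_\sharp$, correct the $y$-divergence of $R^n_\sharp$ via $\mathbb{T}$ from Lemma~\ref{lemma:div}, correct the $x$-divergence of $\bar R^n$, reassemble, and invert $\mathcal{A}$. Where you diverge is in the $x$-correction. You invoke a Bogovskii-type right inverse of $\mathrm{div}_x$ on $\Omega$, which needs boundary regularity (Lipschitz, or at least a John domain) that the lemma does not assume: here $\Omega$ is merely open and bounded. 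The paper sidesteps this by first reducing to $v=0$, then dilating so that $\Omega\subset\subset Q$, multiplying by cutoffs $\varphi_k\nearrow 1$, extending by zero to $Q$, and applying the periodic projection $\mathbb{T}_x$ of Lemma~\ref{lemma:div} on the torus; a diagonal extraction over the cutoff index is what produces the subsequence $\{v^{n_k}\}$ in the statement. Your route is cleaner (no cutoffs, no diagonalization, full sequence), but as written it imports an extra hypothesis on $\partial\Omega$; if you are willing to assume Lipschitz boundary it works, otherwise you should adopt the paper's cutoff-to-torus trick.

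One further small point: you assert that $\iq \tilde R^{n,j}_\sharp(x,y)\,dy=0$, which you need so that $\iq \mathcal{A}(y)w^n\,dy=\tilde S^n$. This is true from the explicit Fourier representation of $\mathbb{T}$, but it is not among the listed properties (P1)--(P5) of Lemma~\ref{lemma:div}; the paper does not assume it and instead subtracts the residual mean $\iq \mathbb{T}_y(R^k-\bar R^k)\,dz$, bounding it via (P3) (see \eqref{eq:property1-r-bis}). Either cite the Fourier formula or subtract the mean as the paper does.
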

\begin{proof}
We first notice that \eqref{eq:pde-0}--\eqref{eq:pde-2} imply that
\begin{eqnarray*}
&&\sum_{i=1}^N\frac{\partial}{\partial {x_i}}\Big(\iQ A^i(y)v(x,y)\,dy\Big)= 0\quad\text{in }W^{-1,p}(\Omega;\R^l),\\
&&\sum_{i=1}^N\frac{\partial}{\partial {y_i}}\Big(A^i(y)v(x,y)\Big)= 0\quad\text{in }W^{-1,p}(Q;\R^l)\text{ for a.e. }x\in\Omega.
\end{eqnarray*}
By linearity, it is enough to consider the case in which $v=0$. Moreover, up to a translation and a dilation, we can assume that $\Omega$ is compactly contained in $Q$. 

By the compact embedding of $L^p(\Omega;\rd)$ into $W^{-1,p}(\Omega;\R^d)$, and by \eqref{eq:pde-0} and \eqref{eq:pde-1}, for every $\varphi\in C^{\infty}_c(\Omega;[0,1])$ there holds
\begin{align*}
&\sum_{i=1}^N \frac{\partial}{\partial {x_i}}\Big(\iQ A^i(y)\varphi(x)v^n(x,y)\,dy\Big)\\
&\quad=\sum_{i=1}^N\varphi(x)\frac{\partial}{\partial {x_i}}\Big(\iQ A^i(y)v^n(x,y)\,dy\Big)+\sum_{i=1}^N\frac{\partial\varphi(x)}{\partial {x_i}}\Big(\iQ A^i(y)v^n(x,y)\,dy\Big)\to 0
\end{align*}
strongly in $W^{-1,p}(\Omega;\R^d)$. On the other hand, by \eqref{eq:pde-2}, 
$$\Big\|\sum_{i=1}^N\frac{\partial}{\partial {y_i}}\Big(A^i(y)v^n(x,y)\Big)\Big\|_{W^{-1,p}(Q;\rl)}\to 0\quad\text{strongly in }L^p(\Omega).$$
Therefore, we may consider a sequence $\{\varphi_k\}\subset C^{\infty}_c(\Omega;[0,1])$ with $\varphi_k\nearrow 1$ and such that, setting $\tilde{v}^n_k:=\varphi_k v^n$ and extending $\tilde{v}^n_k$ by zero to $Q\setminus\Omega$ and then periodically, there holds
\begin{eqnarray*}
&&\tilde{v}^n_k\wk 0\quad\text{weakly in }L^p(Q\times Q;\rd),\\
&&\sum_{i=1}^N\frac{\partial}{\partial {x_i}}\Big(\iQ A^i(y)\tilde{v}^n_k(x,y)\,dy\Big)\to 0\quad\text{strongly in }W^{-1,p}(Q;\rl),\\
&&\Big\|\sum_{i=1}^N\frac{\partial}{\partial {y_i}}\Big(A^i(y)\tilde{v}^n_k(x,y)\Big)\Big\|_{W^{-1,p}(Q;\rl)}\to 0\quad\text{strongly in }L^p(Q),
\end{eqnarray*}
as $n\to +\infty$, $k\to +\infty$.

By a diagonal argument we extract a subsequence $\hat{v}^k:=\tilde{v}^{n(k)}_k$ such that
\begin{eqnarray}
\label{eq:new-pde-0}
&&\hat{v}^k\wk 0\quad\text{weakly in }L^p(Q\times Q;\rd),\\
\label{eq:new-pde-1}
&&\sum_{i=1}^N\frac{\partial}{\partial {x_i}}\Big(\iQ A^i(y)\hat{v}^k(x,y)\,dy\Big)\to 0\quad\text{strongly in }W^{-1,p}(Q;\rl),\\
\label{eq:new-pde-2}
&&\Big\|\sum_{i=1}^N\frac{\partial}{\partial {y_i}}\Big(A^i(y)\hat{v}^k(x,y)\Big)\Big\|_{W^{-1,p}(Q;\rl)}\to 0\quad\text{strongly in }L^p(Q).
\end{eqnarray}

Define the maps 
$$\rn{i}(x,y):=A^i(y)\hat{v}^k(x,y)\quad\text{for a.e. }x\in Q\text{ and }y\in Q,\quad i=1,\cdots,N,$$ 
and let $\rnn\in L^p(Q\times Q;\rd)$ be given by
$$\rnn_{ij}:=(\rn{i})_j,\quad\text{for all }i=1,\cdots,N,\,j=1\cdots,l.$$
 By \eqref{eq:new-pde-0}--\eqref{eq:new-pde-2},
\begin{eqnarray}
\label{eq:new-pde-0-r}
&&R^k\wk 0\quad\text{weakly in }L^p(Q\times Q;\R^d),\\
\label{eq:pde-1-r}
&&\sum_{i=1}^N \frac{\partial}{\partial {x_i}}\Big(\iQ \rn{i}(x,y)\,dy\Big)\to 0\quad\text{strongly in }W^{-1,p}(Q;\rl),\\
\label{eq:pde-2-r}
&&\Big\|\sum_{i=1}^N\frac{\partial}{\partial {y_i}}(\rn{i}(x,y))\Big\|_{W^{-1,p}(Q;\rl)}\to 0\quad\text{strongly in }L^p(Q).
\end{eqnarray}
Using Lemma \ref{lemma:div}, we consider the projection operators $\px$ and $\py$ onto the kernel of the divergence operator with respect to $x$ and the divergence operator with respect to $y$ in the set $Q$. We have
\begin{align}
\label{eq:property2-r}
&\Big\|\px\Big(\iQ \rnn(x,y)\,dy-\iQQ \rnn(w,y)\,dy\,dw\Big)\\
\nonumber&\qquad-\Big(\iQ \rnn(x,y)\,dy-\iQQ \rnn(w,y)\,dy\,dw\Big)\Big\|_{L^p(Q;\rd)}\\
\nonumber&\quad\leq C\Big\|\sum_{i=1}^N \frac{\partial}{\partial {x_i}}\Big(\iQ \rn{i}(x,y)\,dy\Big)\Big\|_{W^{-1,p}(Q;\rl)},
\end{align}
and
\begin{align}
\label{eq:property1-r}
&\Big\|\py\Big(\rnn(x,y)-\iQ\rnn(x,z)\,dz\Big)-\Big(\rnn(x,y)-\iQ\rnn(x,z)\,dz\Big)\Big\|_{L^p(Q\times Q;\rd)}\\
\nonumber &\quad\leq C\Big\|\Big\|\sum_{i=1}^N\frac{\partial }{\partial {y_i}}(\rnn_i(x,y))\Big\|_{W^{-1,p}(Q;\rl)}\Big\|_{L^p(Q)},
\end{align}
which in turn yields
\begin{align}
\label{eq:property1-r-bis}
&\Big\|\iQ\py\Big(\rnn(x,y)-\iQ\rnn(x,z)\,dz\Big)\,dy\Big\|_{L^p(Q;\R^d)}\\
\nonumber&\quad=\Big\|\iQ\Big[\py\Big(\rnn(x,y)-\iQ\rnn(x,z)\,dz\Big)-\Big(\rnn(x,y)-\iQ\rnn(x,z)\,dz\Big)\Big]\,dy\Big\|_{L^p(Q;\R^d)}\\
\nonumber&\quad\leq C\Big\|\Big\|\sum_{i=1}^N\frac{\partial }{\partial {y_i}}(\rnn_i(x,y))\Big\|_{W^{-1,p}(Q;\rl)}\Big\|_{L^p(Q)}.
\end{align}
Set
\bas
\sn(x,y)&:=\py\Big(\rnn(x,y)-\iQ\rnn(x,z)\,dz\Big)-\iQ\Big(\py\Big(\rnn(x,z)-\iQ\rnn(x,\xi)\,d\xi\Big)\Big)\,dz\\
&\quad+\px\Big(\iQ \rnn(x,z)\,dz-\iQQ \rnn(w,z)\,dz\,dw\Big)+\iQQ\rnn(w,z)\,dz\,dw
\end{align*}
for a.e. $(x,y)\in Q\times Q$.
Combining \eqref{eq:new-pde-0-r}--\eqref{eq:property2-r}, we deduce the inequality
\ba{eq:rn-to-0}
&\|\rnn-\sn\|_{L^p(Q\times Q;\rd)}\\
\nn&\quad\leq \Big\|\py\Big(\rnn(x,y)-\iQ\rnn(x,z)\,dz\Big)-\Big(\rnn(x,y)-\iQ\rnn(x,z)\,dz\Big)\Big\|_{L^p(Q\times Q;\rd)}\\
\nn&\qquad+\Big\|\px\Big(\iQ \rnn(x,z)\,dz-\iQQ \rnn(w,z)\,dz\,dw\Big)\\
\nn&\quad\qquad-\Big(\iQ \rnn(x,z)\,dz-\iQQ \rnn(w,z)\,dz\,dw\Big)\Big\|_{L^p(Q;\rd)}\\
\nn&\qquad+ \Big\|\iQ\py\Big(\rnn(x,z)-\iQ\rnn(x,\xi)\,d\xi\Big)\,dz\Big\|_{L^p(Q;\rd))}+\Big|\iQ\iQ R^k(x,y)\,dy\,dx\Big|,
\end{align}
whose right-hand-side converges to zero as $k\to +\infty$. On the other hand, by Lemma \ref{lemma:div} there holds 
\begin{eqnarray}
\label{eq:exact-pde-1}
&&\sum_{i=1}^N\frac{\partial \sn_{ir}(x,y)}{\partial {y_i}}=0\quad\text{in }W^{-1,p}(Q)\quad\text{for a.e. }x\in Q,\\
\label{eq:exact-pde-2}
&&\sum_{i=1}^N\frac{\partial}{\partial {x_i}}\Big(\iQ\sn_{ir}(x,y)\,dy\Big)=0\quad\text{in }W^{-1,p}(Q), 
\end{eqnarray}
for every $k$, for all $r=1,\cdots, l$.
 
Finally, define
$$w^k(x,y):=\mathcal{A}(y)^{-1}\left(\begin{array}{c}\sn_{1}(x,y)\\\vdots\\
\sn_{N}(x,y)\end{array}\right)\quad\text{for a.e. }x\in\Omega\text{ and }y\in \Rn$$
(where the components $\sn_{i}$ are defined analogously to the maps $\rn{i}$).
Properties \eqref{eq:pde-satisfied-1} and \eqref{eq:pde-satisfied-2} follow directly from \eqref{eq:exact-pde-1} and \eqref{eq:exact-pde-2}. Condition \eqref{eq:lp-distance} is a consequence of the boundedness of $\mathcal{A}^{-1}$ (due to \eqref{eq:A-invertibility}) and \eqref{eq:rn-to-0}.
\end{proof}
\begin{remark}
\label{remark:if-regular}
By property (P4) in Lemma \ref{lemma:div}, the boundedness of the operators $A^i$, $i=1,\cdots, N$, and the uniform invertibility condition \eqref{eq:A-invertibility}, it follows that if $\{v^n\}$ is p-equiintegrable, then $\{w^k\}$ is p-equiintegrable as well.

In view of property (P5) in Lemma \ref{lemma:div} if $A^i\in C^{\infty}_{\rm per}(\R^N;\M^{l\times d})$, $i=1\cdots,N$, and $\{v^n\}\subset C^{\infty}_c(\Omega;C^{\infty}_{\rm per}(\R^N;\rd))$, then the sequence $\{w^k\}$ constructed in the proof of Lemma \ref{lemma:div} inherits the same regularity. 
\end{remark}
\medskip
In order to characterize the limit differential constraint, for $u\in L^p(\Omega;\R^d)$ and $n\in\N$ we introduce the classes
\ba{eq:limit-test-functions}
\mathcal{C}^{\pdeor(n\cdot)}_u&:=\Big\{w\in L^p(\Omega;L^p_{\rm per}(\R^N;\rd)):\,\iQ w(x,y)\,dy=0\quad\text{for a.e. }x\in\Omega,\\
\nn&\quad\sum_{i=1}^N\frac{\partial}{\partial {x_i}}\Big(\iQ A^i(ny)(u(x)+w(x,y))\,dy\Big)=0\quad\text{in }W^{-1,p}(\Omega;\rl),\\
\nn&\quad\sum_{i=1}^N\frac{\partial}{\partial {y_i}}\Big(A^i(ny)(u(x)+w(x,y))\Big)=0\quad\text{in }W^{-1,p}(Q;\rl)\text{ for a.e. }x\in\Omega\Big\},
\end{align}
and
\be{eq:limit-domain}
\mathcal{C}^{\pdeor(n\cdot)}:=\{u\in L^p(\Omega;\rd):\, \mathcal{C}^{\pdeor(n\cdot)}_u\neq \emptyset \}.
\ee
For simplicity we will also adopt the notation $\C_u:=\mathcal{C}^{\pdeor(1\cdot)}_u$ and $\C:=\mathcal{C}^{\pdeor(1\cdot)}$. Lemma \ref{lemma:projection} allows us to provide a first characterization of the set $\C$ in the case in which $A^i\in C^{\infty}_{\rm per}(\R^N;\M^{l\times d})$, $i=1\cdots,N$.
  \begin{proposition}
 \label{lemma:limit-maps}
 Let $1<p<+\infty$. Let $A^i\in C^{\infty}_{\rm per}(\R^N;\M^{l\times d})$, $i=1,\cdots,N$, with ${\mathcal{A}}$ satisfying the invertibility requirement in \eqref{eq:A-invertibility}. Let $\C$ be the class introduced in \eqref{eq:limit-domain} and let $\pdeep{\e}$ be the operator defined in \eqref{eq:def-pde-epsilon}. Then 
\ba{eq:charact-C}
 \C&=\Big\{u\in L^p(\Omega;\rd):\,\text{there exists a sequence }\{u_{\e}\}\subset L^p(\Omega;\rd)\text{ such that }\\
\nn&\quad u_{\e}\wk u\quad\text{weakly in }L^p(\Omega;\rd)\text{ and }\pdeep{\e}u_{\e}\to 0\quad\text{strongly in }W^{-1,p}(\Omega;\rl)\Big\}.
 \end{align}
 Moreover, for every $u\in\C$ and $w\in\C_u$ there exists a sequence $\{u_{\e}\}\subset L^p(\Omega;\rd)$ such that
  $$u_{\e}\sts u+w\quad\text{strongly two-scale in }L^p(\Omega\times Q;\rd),$$
   and 
   $$\pdeep{\e}u_{\e}\to 0\quad\text{strongly in }W^{-1,p}(\Omega;\rl).$$
 \end{proposition}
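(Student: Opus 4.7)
The plan is to handle both inclusions in \eqref{eq:charact-C} together with the stronger strong two-scale statement.

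For ``$\supseteq$'', take $\{u_\ep\}$ with $u_\ep \wk u$ in $L^p(\Omega;\rd)$ and $\pdeep{\ep}u_\ep \to 0$ in $W^{-1,p}(\Omega;\rl)$. Proposition \ref{prop:2-scale-compactness} supplies, up to a subsequence, $v \in L^p(\Omega;L^p_{\rm per}(\R^N;\rd))$ with $u_\ep \wkts v$ and $\iQ v(x,y)\,dy = u(x)$, so $w := v - u$ has zero $y$-mean. I then verify the two constraints defining $\C_u$ by testing the vanishing of $\pdeep{\ep}u_\ep$ against two classes of functions: a fixed $\varphi \in C^\infty_c(\Omega;\rl)$ yields \eqref{eq:const1} via weak two-scale convergence (continuity of $A^i$ makes $A^i(x/\ep)\varphi$ an admissible two-scale test); the oscillating tests $\ep\varphi(x)\psi(x/\ep)$, with $\varphi \in C^\infty_c(\Omega)$ and $\psi \in C^\infty_{\rm per}(\R^N;\rl)$, are bounded in $W^{1,p'}_0(\Omega;\rl)$ so their pairing with $\pdeep{\ep}u_\ep$ tends to zero; the chain rule isolates a dominant piece that two-scale converges to $\iOQ \sum_i A^i(y)v(x,y)\cdot\varphi(x)\partial_{y_i}\psi(y)\,dy\,dx$ (with all other contributions being $O(\ep)$), delivering \eqref{eq:const2}.

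For the ``moreover'' statement (which subsumes ``$\subseteq$'' by passing $u_\ep \sts u+w$ to weak $L^p$-convergence), I first treat the smooth case: assume $u \in C^\infty_c(\Omega;\rd)$ and $w \in C^\infty_c(\Omega;C^\infty_{\rm per}(\R^N;\rd))$ with $\iQ w\,dy = 0$ and $w \in \C_u$, and set $u_\ep(x) := u(x) + w(x,x/\ep)$. Proposition \ref{prop:simple-2-scale} delivers $u_\ep \sts u+w$ strongly two-scale. A chain-rule computation yields
\begin{align*}
\pdeep{\ep}u_\ep(x) &= \frac{1}{\ep}\sum_{i=1}^N \partial_{y_i}\bigl[A^i(y)(u(x)+w(x,y))\bigr]\Big|_{y=x/\ep} \\
&\quad + \sum_{i=1}^N A^i(x/\ep)\bigl[\partial_{x_i}u(x) + (\partial_{x_i}w)(x,x/\ep)\bigr].
\end{align*}
The singular $1/\ep$ term vanishes identically by the pointwise (smooth) version of \eqref{eq:const2}, while the remaining bounded oscillatory term has zero $y$-mean thanks to \eqref{eq:const1} (commuting $\partial_{x_i}$ with $\iQ\cdot\,dy$); it therefore converges weakly to zero in $L^p(\Omega;\rl)$ and, by the compact embedding $L^p(\Omega;\rl)\hookrightarrow W^{-1,p}(\Omega;\rl)$ on the bounded $\Omega$, strongly to zero in $W^{-1,p}(\Omega;\rl)$.

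For general $u \in \C$ and $w \in \C_u$, I would mollify $v := u + w$ (with a cutoff preserving compact support in $\Omega$) to produce $v^n \in C^\infty_c(\Omega;C^\infty_{\rm per}(\R^N;\rd))$ with $v^n \to v$ in $L^p(\Omega\times Q;\rd)$; strong convergence combined with the exact constraints on $v$ furnishes the asymptotic hypotheses \eqref{eq:pde-1}--\eqref{eq:pde-2} of Lemma \ref{lemma:projection}. That lemma, together with Remark \ref{remark:if-regular}, yields (along a subsequence) smooth $w^k$ satisfying both constraints exactly, with $w^k - v^{n_k} \to 0$ in $L^p$. Writing $w^k = u^k + \tilde{w}^k$ with $u^k(x) := \iQ w^k(x,y)\,dy$, I check that $u^k \to u$ in $L^p(\Omega;\rd)$, $\tilde{w}^k \to w$ in $L^p(\Omega\times Q;\rd)$, and $\tilde{w}^k \in \C_{u^k}$. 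Applying the smooth construction to $(u^k,\tilde{w}^k)$ produces, for each $k$, a family $\{u^k_\ep\}_\ep$ with $u^k_\ep \sts w^k$ and $\pdeep{\ep}u^k_\ep \to 0$ as $\ep \to 0$; a diagonal extraction $\ep_k \searrow 0$ combined with the $L^p$-convergence $w^k \to v$ (which is itself a strong two-scale convergence) yields $u^k_{\ep_k} \sts u+w$ and $\pdeep{\ep_k}u^k_{\ep_k} \to 0$. The main obstacle is precisely this exact-constraint approximation: naive mollification breaks \eqref{eq:const1}--\eqref{eq:const2}, and because of the $\ep$-dependence these two requirements must be restored simultaneously, which is exactly what the uniform ellipticity \eqref{eq:A-invertibility} enables through Lemma \ref{lemma:projection} via inversion of $\mathcal{A}(y)$ after a componentwise divergence-projection.
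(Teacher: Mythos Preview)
Your proposal is correct and follows essentially the same route as the paper: two-scale compactness plus testing against fixed and oscillating functions for the inclusion $\mathcal{D}\subset\C$, and for $\C\subset\mathcal{D}$ a smooth approximation of $v=u+w$, projection onto exact solutions via Lemma~\ref{lemma:projection} and Remark~\ref{remark:if-regular}, the explicit recovery $\hat v_\delta(x,x/\ep)$ with the chain-rule splitting you describe, and a diagonalization (the paper uses Attouch's lemma applied to $\|T_\ep u^\ep_\delta-(u+w)\|_{L^p}+\|\pdeep{\ep}u^\ep_\delta\|_{W^{-1,p}}$). The only cosmetic differences are that the paper works directly with $\hat v_\delta$ rather than splitting it into $u^k+\tilde w^k$, and that the output of the projection is $C^\infty(\Omega;C^\infty_{\rm per})$ rather than $C^\infty_c$, so your smooth-case hypothesis should be relaxed accordingly (the argument itself does not use compact support).
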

 \begin{proof}
 Denote by $\mathcal{D}$ the set in the right-hand side of \eqref{eq:charact-C}. We divide the proof into two steps.\\
 \emph{Step 1}:
  We first show the inclusion
 $$\mathcal{D}\subset\C.$$
 Let $u\in \mathcal{D}$, and let $\{u_{\e}\}\subset L^p(\Omega;\rd)$ be such that 
 \be{eq:vep-wk-u}
 u_{\e}\wk u\quad\text{weakly in }L^p(\Omega;\rd)\ee
  and 
  \be{eq:vep-pde}
  \pdeep{\e}u_{\e}\to 0\quad\text{strongly in }W^{-1,p}(\Omega;\rl).
  \ee
 Consider a test function $\psi\in C^1_c(\Omega;\rl)$. We have
 \be{eq:ep-maps-to-zero}
 \scal{\pdeep{\e}u_{\e}}{\psi}\to 0,
 \ee
 where $\scal{\cdot}{\cdot}$ denotes the duality product between $W^{-1,p}(\Omega;\rl)$ and $W^{1,p'}_0(\Omega;\rl)$. By definition of the operators $\pdeep{\e}$, 
 $$\scal{\pdeep{\e}u_{\e}}{\psi}=-\iO \sum_{i=1}^N A^i\Big(\frac{x}{\e}\Big)u_{\e}(x)\cdot\frac{\partial \psi(x)}{\partial x_i}\,dx\quad\text{for every }\e>0.$$
 By Proposition \ref{prop:2-scale-compactness} there exists a map $w\in L^p(\Omega;L^p_{\rm per}(\Rn;\rd))$ with \mbox{$\iQ w(x,y)\,dy=0$} such that, up to the extraction of a (not relabeled) subsequence
 \begin{equation}
 \label{eq:subs-wkts}
 u_{\e}\wkts {v}\quad\text{weakly two-scale}
 \end{equation}
 where
 \be{eq:subs-wk2}
 {v}(x,y):=u(x)+w(x,y).
 \ee
 for a.e. $x\in\Omega$, $y\in\Rn$.
 Hence, by the definition of two-scale convergence,
 $$\scal{\pdeep{\e}u_{\e}}{\psi}\to-\iOQ \sum_{i=1}^N A^i(y){v}(x,y)\cdot\frac{\partial\psi(x)}{\partial x_i}\,dy\,dx,$$
 and by \eqref{eq:ep-maps-to-zero} we have that 
 \begin{equation}
 \label{eq:first-cond-C}
 \sum_{i=1}^N\frac{\partial}{\partial {x_i}}\Big(\iQ A^i(y)(u(x)+w(x,y))\,dy\Big)=0\quad\text{in }W^{-1,p}(\Omega;\rl).
 \end{equation}
 
 Let now $\varphi\in C^1_{\rm per}(\Rn;\rl)$, $\psi\in C^1_c(\Omega)$, and consider the sequence of test functions 
 $$\varphi_{\e}(x):=\e\varphi\Big(\frac{x}{\e}\Big)\psi(x)\quad\text{for }x\in\R^N.$$
 The sequence $\{\varphi_{\e}\}$ is uniformly bounded in $W^{1,p'}_0(\Omega;\rl)$, therefore by \eqref{eq:vep-pde}
\be{eq:ep-maps-to-zero-two}
 \scal{\pdeep{\e}u_{\e}}{\varphi_{\e}}\to 0,
 \ee
 with
 \begin{multline*}
 \scal{\pdeep{\e}u_{\e}}{\varphi_{\e}}=-\iO\sum_{i=1}^N A^i\Big(\frac{x}{\e}\Big)u_{\e}(x)\cdot\Big(\frac{\partial\varphi}{\partial y_i}\Big(\frac{x}{\e}\Big)\psi(x)+\e\varphi\Big(\frac{x}{\e}\Big)\frac{\partial\psi(x)}{\partial x_i}\Big)\,dx
 \end{multline*}
 for every $\e$.
 Passing to the subsequence of $\{u_{\e}\}$ extracted in \eqref{eq:subs-wkts}, and applying the definition of two-scale convergence, we obtain
 $$\iOQ \sum_{i=1}^N A^i(y){v}(x,y)\cdot\frac{\partial \varphi(y)}{\partial y_i}\psi(x)\,dy\,dx=0$$
 for every $\varphi\in C^1_{\rm per}(\Rn;\rl)$ and $\psi\in C^1_c(\Omega)$. By density, this equality still holds for an arbitrary $\varphi\in W^{1,{p'}}_0(Q;\rl)$, and so
 \begin{equation}
 \label{eq:second-cond-C}\sum_{i=1}^N\frac{\partial}{\partial {y_i}}\Big(A^i(y)(u(x)+w(x,y))\Big)=0\quad\text{in }W^{-1,p}(Q;\rl)\text{ for a.e. }x\in\Omega.
 \end{equation}
Combining $\eqref{eq:first-cond-C}$ and \eqref{eq:second-cond-C}, we deduce that $u\in \C$.\\
\emph{Step 2}: We claim that $\C\subset \mathcal{D}$. Let $u\in \C$, let $w\in \C_{u}$, and set 
$$v(x,y):=u(x)+w(x,y)\quad\text{for a.e }x\in\Omega\text{ and }y\in \Rn.$$
Let $\{v_{\delta}\}\subset C^{\infty}_c(\Omega; C^{\infty}_{\rm per}(\R^N;\rd))$ be such that 
\be{eq:approx-v-delta}
v_{\delta}\to v\quad\text{strongly in }L^p(\Omega\times Q;\rd).
\ee
The sequence $\{v_{\delta}\}$ satisfies both \eqref{eq:pde-1} and \eqref{eq:pde-2}, hence by Lemma \ref{lemma:projection}  and Remark \ref{remark:if-regular} we can construct a sequence $\{\hat{v}_{\delta}\}\subset C^{\infty}(\Omega;C^{\infty}_{\rm per}(\Rn;\rd))$ such that 
\be{eq:strong-to-u}\hat{v}_{\delta}\to v\quad\text{strongly in }L^p(\Omega\times Q;\rd),\ee
\be{eq:pde-w-delta-1}\sum_{i=1}^N\frac{\partial}{\partial {x_i}}\Big(\iQ A^i(y)\hat{v}_{\delta}(x,y)\,dy\Big)=0\quad\text{in }W^{-1,p}(\Omega;\rl),\ee
and
\be{eq:pde-w-delta-2}\sum_{i=1}^N\frac{\partial}{\partial {y_i}}(A^i(y)\hat{v}_{\delta}(x,y))=0\quad\text{in }W^{-1,p}(Q;\rl)\text{ for a.e. }x\in\Omega.\ee
Consider now the maps
$$u^{\e}_{\delta}(x):=\hat{v}_{\delta}\Big(x,\frac{x}{\e}\Big)$$
for every $x\in\Omega$.
By Proposition \ref{prop:simple-2-scale} we have
$$u^{\e}_{\delta}\sts\hat{v}_{\delta}\quad\text{strongly two-scale in }L^p(\Omega\times Q;\rd)$$
 as $\ep\to 0$, and hence, by Theorem \ref{thm:equivalent-two-scale}
\be{eq:equivalent-unf}
T_{\e}u^{\e}_{\delta}\to\hat{v}_{\delta}\quad\text{strongly in }L^p(\R^N\times Q;\rd)
\ee
(where $T_{\ep}$ is the unfolding operator defined in \eqref{eq:unfolding-operator}).
We observe that by \eqref{eq:pde-w-delta-2},
\be{eq:no-contribution}\sum_{i=1}^N \frac{\partial A^i}{\partial y_i}\Big(\frac{x}{\e}\Big)\hat{v}_{\delta}\Big(x,\frac{x}{\e}\Big)+A^i\Big(\frac{x}{\e}\Big)\frac{\partial \hat{v}_{\delta}}{\partial y_i}\Big(x,\frac{x}{\e}\Big)=0\ee
for all $x\in\Omega$, for every $\e$ and $\delta$. Moreover, by Propositions \ref{prop:2-scale-compactness} and \ref{prop:simple-2-scale},
\ba{eq:good-contribution}
 &\sum_{i=1}^NA^i\Big(\frac{x}{\e}\Big)\frac{\partial \hat{v}_{\delta}}{\partial x_i}\Big(x,\frac{x}{\e}\Big)\wk \sum_{i=1}^N\iQ A^i(y)\frac{\partial\hat{v}_{\delta}}{\partial {x_i}}(x,y)\,dy\\
 \nn&\quad=\sum_{i=1}^N\frac{\partial}{\partial {x_i}}\Big(\iQ A^i(y)\hat{v}_{\delta}(x,y)\,dy\Big)=0
 \end{align}
as $\e\to 0$, weakly in $L^p(\Omega;\rd)$, where the last equality follows by \eqref{eq:pde-w-delta-1}.
Finally, since
\bas
\pdeep{\ep}u^{\ep}_{\delta}(x)&=\sum_{i=1}^N\frac{\partial}{\partial x_i}\Big(A^i\Big(\frac{x}{\ep}\Big)u^{\ep}_{\delta}(x)\Big)\\
\nn&\quad=\frac{1}{\ep}\sum_{i=1}^N \Big[\frac{\partial A^i}{\partial y_i}\Big(\frac{x}{\e}\Big)\hat{v}_{\delta}\Big(x,\frac{x}{\e}\Big)+A^i\Big(\frac{x}{\e}\Big)\frac{\partial \hat{v}_{\delta}}{\partial y_i}\Big(x,\frac{x}{\e}\Big)\Big]\\
\nn&\qquad+\sum_{i=1}^NA^i\Big(\frac{x}{\e}\Big)\frac{\partial \hat{v}_{\delta}}{\partial x_i}\Big(x,\frac{x}{\e}\Big),
\end{align*}
by \eqref{eq:no-contribution}, \eqref{eq:good-contribution}, and the compact embedding of $L^p$ into $W^{-1,p}$, we conclude that
\be{eq:conv-pde-prop}
\pdeep{\ep}u^{\ep}_{\delta}\to 0\quad\text{strongly in }W^{-1,p}(\Omega;\rl),\ee
as $\ep\to 0$.
Collecting \eqref{eq:strong-to-u}, \eqref{eq:equivalent-unf}, and \eqref{eq:conv-pde-prop}, we deduce that
$$\lim_{\delta\to 0}\lim_{\ep\to 0}\Big(\|T_{\ep}u^{\e}_{\delta}-(u+w)\|_{L^p(\Omega\times Q)}+\|\pdeep{\ep}u^{\e}_{\delta}\|_{W^{-1,p}(\Omega;\rl)}\Big)=0.$$
By Attouch's diagonalization lemma \cite[Lemma 1.15 and Corollary 1.16]{attouch}, there exists a subsequence $\{\delta(\e)\}$ such that
$$\lim_{\ep\to 0}\Big(\|T_{\ep}u^{\e}_{\delta(\e)}-(u+w)\|_{L^p(\Omega\times Q)}+\|\pdeep{\ep}u^{\e}_{\delta(\e)}\|_{W^{-1,p}(\Omega;\rl)}\Big)=0.$$
Setting $u^{\e}:=u^{\e}_{\delta_{\e}}$, we finally obtain 
$$\pdeep{\e}u_{\ep}\to 0\quad\text{strongly in }W^{-1,p}(\Omega;\rl)$$
and
$$u^{\e}\sts u+w\quad\text{strongly two-scale in }L^p(\Omega\times Q;\rd),$$
and hence, by Proposition \ref{prop:2-scale-compactness}, 
$$u^{\e}\wk u\quad\text{weakly in }L^p(\Omega;\rd).$$
This yields that $u\in\mathcal{D}$ and completes the proof of the proposition. 
  \end{proof}
   \begin{remark}
  \label{rk:not-so-easy}
  The regularity of the operators $A^i$ played a key role in Step 2. In the case in which $A^i\in L^{\infty}_{\rm per}(\R^N;\M^{l\times d})$, $i=1,\cdots, N$, but we have no further smoothness assumptions on the operators, the argument in Step 1 still guarantees that   
  \ba{eq:charact-C-bdd-first-side}
 &\Big\{u\in L^p(\Omega;\rd):\,\text{there exists a sequence }\{u_{\e}\}\subset L^p(\Omega;\rd)\text{ such that }\\
 \nn&\quad u_{\e}\wk u\quad\text{weakly in }L^p(\Omega;\rd)\\
 \nn&\quad\text{ and }\pdeep{\e}u_{\e}\to 0\quad\text{strongly in }W^{-1,q}(\Omega;\rl)\quad\text{for every }1\leq q<p\Big\}\subset\C.
 \end{align}
 Indeed, arguing as in Step 1 we obtain that there exists $w\in L^p(\Omega;L^p_{\rm per}(\R^N;\rd))$ with $\iQ w(x,y)\,dy=0$, such that
 $$u_{\e}\wkts u+w\quad\text{weakly two-scale in }L^p(\Omega\times Q;\rd),$$
 and
 \ba{eq:pde-q}
 &\sum_{i=1}^N\frac{\partial}{\partial {x_i}}\Big(\iQ A^i(y)(u(x)+w(x,y))\,dy\Big)=0\quad\text{in }W^{-1,q}(\Omega;\rl)\\
\nn&\sum_{i=1}^N\frac{\partial}{\partial {y_i}}(A^i(y)(u(x)+w(x,y)))=0\quad\text{in }W^{-1,q}(Q;\rl)\text{ for a.e. }x\in\Omega,
 \end{align}
 for all $1\leq q<p$. Since $u+w\in L^p(\Omega;L^p_{\rm per}(\R^N;\rd))$, it follows that \eqref{eq:pde-q} holds also for $q=p$. Therefore we deduce the inclusion \eqref{eq:charact-C-bdd-first-side}. 
 
 The proof of the opposite inclusion, on the other hand, is not a straightforward consequence of Proposition \ref{lemma:limit-maps}. In fact, in the case in which the operators $A^i$ are only bounded, the second conclusion in Remark \ref{remark:if-regular} does not hold anymore, and we are not able to guarantee that the projection operator provided by Lemma \ref{lemma:projection} preserves the regularity of smooth functions. Therefore, the measurability of the maps $u^{\ep}_{\delta}$ is questionable (see \cite[discussion below Definition 1.4]{allaire}).
 This difficulty will be overcome in Lemma \ref{lemma:limit-maps-bdd} by means of an approximation of the operators $A^i$ with $C^{\infty}$ operators.
  \end{remark}
 \section{Homogenization for smooth operators}
 \label{section:smooth}
   We recall that
   \be{eq:def-limit-energy}
\FFN^{\,r}(u):=\inf \Big\{\iOQ f(u(x)+w(x,y))\,dy\,dx:\,w\in{\mathcal{C}}_u^{\pdeor(n\cdot)},\|w\|_{L^p(\Omega\times Q;\R^d)}\leq r\Big\}
\ee
for every $u\in {\mathcal{C}}^{\pdeor(n\cdot)}_r$ and $r>0$, where
$$\mathcal{C}^{\pdeor(n\cdot)}_r:=\{v\in L^p(\Omega;\R^d):\,\exists w\in {\mathcal{C}_v^{\pdeor(n\cdot)}}\quad\text{with }\|w\|_{L^p(\Omega\times Q;\R^d)}\leq r\},$$
\be{eq:def-extended-energy}
\FBN^{\,r}(u):=\begin{cases}\FFN^{\,r}(u)&\text{if }u\in{\mathcal{C}}^{\pdeor(n\cdot)}_r,\\
+\infty&\text{otherwise in }L^p(\Omega;\rd),
\end{cases}
\ee
for every $r>0$,
and
\be{eq:def-lsc-energy}
\F(u):=\begin{cases}\inf_{r>0}\inf\Big\{\liminfn\FBN^{\,r}(u_n):u_n\wk u\quad\text{weakly in }L^p(\Omega;\rd)\Big\}\\
\text{if }u\in\C,\\
+\infty\quad\text{otherwise in }L^p(\Omega;\rd).\end{cases}
\ee
\begin{remark}
\label{rk:important}
We observe that for every $u\in\C$ there holds
$${\mathcal S}:=\Big\{\{u_n\}:\,u_n\wk u\quad\text{weakly in }L^p(\Omega;\R^d),\,u_n\in\mathcal{C}^{\pdeor(n\cdot)}\quad\text{for every }n\in\N\Big\}\neq\emptyset$$
{and $\F(u)<+\infty$. Indeed, let $u\in\C$ and $w\in\C_u$. Then a change of variables and the periodicity of $w$ yield immediately that
$$\iQ w(x,ny)\,dy=0\quad\text{for a.e. }x\in\Omega$$
and
$$\sum_{i=1}^n\frac{\partial}{\partial x_i}\Big(\iQ(A^i(ny)(u(x)+w(x,ny)))\,dy\Big)=0\quad\text{in }W^{-1,p}(\Omega;\R^l).$$
Proving that 
$$\sum_{i=1}^n\frac{\partial}{\partial y_i}(A^i(ny)(u(x)+w(x,ny)))=0\quad\text{in }W^{-1,p}(Q;\R^l)\text{ for a.e. }x\in\Omega$$
is equivalent to showing that
\be{eq:remark-to-do}
\sum_{i=1}^n\frac{\partial}{\partial y_i}(A^i(y)(u(x)+w(x,y)))=0\quad\text{in }W^{-1,p}(nQ;\R^l)\text{ for a.e. }x\in\Omega.
\ee
To this purpose, arguing as in Step 2 of the proof of Proposition \ref{lemma:limit-maps}, construct $\{\hat{v}^{\delta}\}\subset C^{\infty}(\Omega;\, C^{\infty}_{\rm per}(\R^N;\R^d))$ such that
\ba{eq:remark-lp-conv}
&\hat{v}^{\delta}\to u+w\quad\text{strongly in }L^p(\Omega\times Q;\R^d),\\
&\sum_{i=1}^n\frac{\partial}{\partial x_i}\Big(\iQ(A^i(ny)\hat{v}^{\delta}(x,y))\,dy\Big)=0\quad\text{in }W^{-1,p}(\Omega;\R^l),\\
&\sum_{i=1}^n\frac{\partial}{\partial y_i}(A^i(y)\hat{v}^{\delta}(x,y))=0\quad\text{in }W^{-1,p}(Q;\R^l)\text{ for a.e. }x\in\Omega.
\end{align}
By the smoothness and the periodicity of $\{\hat{v}^{\delta}\}$ there holds
$$\sum_{i=1}^n\frac{\partial}{\partial y_i}(A^i(y)\hat{v}^{\delta}(x,y))=0\quad\text{in }W^{-1,p}(nQ;\R^l)\text{ for a.e. }x\in\Omega$$
and \eqref{eq:remark-to-do} follows in view of \eqref{eq:remark-lp-conv}. By the previous argument,}
  $w(x,ny)\in\mathcal{C}^{\pdeor(n\cdot)}_{u}$, therefore the set $\mathcal{S}$ contains always the sequence $u_n:=u$ for every $n$.

\end{remark}
\begin{theorem}
\label{thm:main-homogenized-pde}
 Let $1<p<+\infty$. Let $A^i\in C^{\infty}_{\rm per}(\R^N;\M^{l\times d})$, $i=1,\cdots, N$, assume that the operator ${\mathcal{A}}$ satisfies the invertibility requirement in \eqref{eq:A-invertibility}, and let $\pdeep{\e}$ be the operator defined in \eqref{eq:def-pde-epsilon}. Let $f:\R^d\to [0,+\infty)$ be a continuous function satisfying the growth condition
\be{eq:growth-p-pde}
0\leq f(v)\leq C(1+|v|^p)\quad\text{for every }v\in\rd,
\ee
where $C>0$. Then, for every $u\in L^p(\Omega;\rd)$ there holds
 \bas
 &\inf\Big\{\liminf_{\ep\to 0}\iO f(u_{\ep}(x))\,dx:\,u_{\ep}\wk u\quad\text{weakly in }L^p(\Omega;\rd)\\
 &\qquad\text{and }\pdeep{\e}u_{\e}\to 0\quad\text{strongly in }W^{-1,p}(\Omega;\rl)\Big\}\\
 &\quad=\inf\Big\{\limsup_{\ep\to 0}\iO f(u_{\ep}(x))\,dx:\,u_{\ep}\wk u\quad\text{weakly in }L^p(\Omega;\rd)\\
 &\qquad\text{and }\pdeep{\e}u_{\e}\to 0\quad\text{strongly in }W^{-1,p}(\Omega;\rl)\Big\}=\F(u).
 \end{align*}
\end{theorem}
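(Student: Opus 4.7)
The statement splits into the standard limsup and liminf bounds: $\inf\liminf \leq \inf\limsup$ is trivial, so I would establish $\inf\limsup \leq \mathscr{F}_{\pdeor}(u)$ and $\mathscr{F}_{\pdeor}(u) \leq \inf\liminf$ separately.

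\emph{Limsup bound.} Fix $u \in \mathcal{C}$ and $r > 0$, and take an arbitrary admissible pair $(\{u_n\},\{w_n\})$ with $u_n \wk u$ weakly in $L^p$, $w_n \in \mathcal{C}^{\pdeor(n\cdot)}_{u_n}$, and $\|w_n\|_{L^p(\Omega\times Q)} \leq r$. The constraints defining $\mathcal{C}^{\pdeor(n\cdot)}_{u_n}$ are precisely those handled by Proposition \ref{lemma:limit-maps} after replacing $A^i(\cdot)$ by $A^i(n\cdot)$; the latter preserves smoothness, $Q$-periodicity, and the ellipticity \eqref{eq:A-invertibility}. For each fixed $n$, Step 2 of the proof of Proposition \ref{lemma:limit-maps} thus produces $\{u^{\varepsilon,n}\}_\varepsilon$ with $u^{\varepsilon,n} \sts u_n + w_n$ strongly two-scale in $L^p(\Omega\times Q;\R^d)$ and $\pdeep{\varepsilon} u^{\varepsilon,n} \to 0$ in $W^{-1,p}(\Omega;\R^l)$. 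Proposition \ref{prop:simple-2-scale} makes this sequence $p$-equiintegrable, so the $p$-growth \eqref{eq:growth-p-pde} together with continuity of $f$ yields
\[
\lim_{\varepsilon\to 0}\int_\Omega f(u^{\varepsilon,n}(x))\,dx = \int_\Omega\int_Q f(u_n(x)+w_n(x,y))\,dy\,dx.
\]
A diagonal extraction via Attouch's lemma (\cite[Corollary 1.16]{attouch}) over $(n,\varepsilon)$ then produces an admissible $\{u_\varepsilon\}$ satisfying $\limsup_\varepsilon \int_\Omega f(u_\varepsilon) \leq \liminf_n \int_{\Omega\times Q} f(u_n + w_n)$; infimizing over $(u_n,w_n)$ and then over $r>0$ closes this direction. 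The truncation $\|w_n\|_{L^p}\le r$ keeps the recovery sequences uniformly bounded in $L^p$ and legitimates the diagonalization.

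\emph{Liminf bound.} Take any admissible $\{u_\varepsilon\}$ and, along a subsequence relabeled $\varepsilon_n = 1/n$, achieve $L := \liminf_\varepsilon\int_\Omega f(u_\varepsilon)\,dx$, which I may assume is finite. By Propositions \ref{prop:2-scale-compactness} and \ref{lemma:limit-maps}, after a further extraction $u_{\varepsilon_n} \wkts u+w^*$ weakly two-scale with $w^*\in\mathcal{C}_u$, and Theorem \ref{thm:equivalent-two-scale} gives $U_n:=T_{\varepsilon_n}u_{\varepsilon_n} \wk u+w^*$ weakly in $L^p(\R^N\times Q;\R^d)$. Set $\bar U_n(x):=\int_Q U_n(x,y)\,dy$ and $W_n:=U_n - \bar U_n$; then $\bar U_n\wk u$ in $L^p$ and $\int_Q W_n\,dy = 0$. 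Using the $Q$-periodicity of the $A^i$ together with the chain rule identity $\partial_{y_i}U_n(x,y) = \varepsilon_n(\partial_i u_{\varepsilon_n})(\xi)$, with $\xi := \varepsilon_n\lfloor x/\varepsilon_n\rfloor+\varepsilon_n y$, a direct computation yields
\[
\sum_{i=1}^N\frac{\partial}{\partial y_i}\bigl(A^i(ny)\,U_n(x,y)\bigr) = \varepsilon_n\,\pdeep{\varepsilon_n} u_{\varepsilon_n}(\xi),
\]
whose $W^{-1,p}(Q;\R^l)$-norm in $y$ tends to $0$ in $L^p(\Omega)$; analogously $\sum_i\partial_{x_i}(\int_Q A^i(ny)U_n\,dy)\to 0$ in $W^{-1,p}(\Omega;\R^l)$. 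Hence $(\bar U_n, W_n)$ approximately satisfies the conditions defining $\mathcal{C}^{\pdeor(n\cdot)}_{\bar U_n}$, and Lemma \ref{lemma:projection} applied with the operators $A^i(n\cdot)$ — whose Lemma \ref{lemma:div} constants stay controlled uniformly in $n$ by periodicity and \eqref{eq:A-invertibility} — corrects $(\bar U_n, W_n)$ to a genuine admissible pair $(\tilde U_n, \tilde W_n)$, i.e.\ $\tilde U_n \in \mathcal{C}^{\pdeor(n\cdot)}$ and $\tilde W_n \in \mathcal{C}^{\pdeor(n\cdot)}_{\tilde U_n}$, with $\|(\tilde U_n + \tilde W_n) - U_n\|_{L^p(\Omega\times Q)} \to 0$. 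The unfolding–integration identity $\int_\Omega f(u_{\varepsilon_n})\,dx = \int_{\Omega\times Q} f(U_n)\,dy\,dx + o(1)$, the $p$-equiintegrability inherited via Remark \ref{remark:if-regular}, and continuity of $f$ then yield $\overline{\mathcal{F}}_{\pdeor(n\cdot)}^{\,r}(\tilde U_n) \leq \int_\Omega f(u_{\varepsilon_n})\,dx + o(1)$ for some $n$-independent $r$; passing to $\liminf$ delivers $\mathscr{F}_{\pdeor}(u) \leq L$.

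The main technical obstacle will be the correction step in the liminf argument: Lemma \ref{lemma:projection} must be invoked uniformly in the rescaling parameter $n$, which requires verifying that the constants coming from the divergence projection of Lemma \ref{lemma:div} do not degrade under $A^i \mapsto A^i(n\cdot)$. A secondary subtlety is that $U_n = T_{\varepsilon_n}u_{\varepsilon_n}$ need not be $p$-equiintegrable, so the passage from $L^p$-closeness to closeness of $\int f(\cdot)$ (absent any Lipschitz control on $f$) demands either a preliminary equiintegrability decomposition of $u_\varepsilon$ or careful use of the cutoff $\|w\|_{L^p}\leq r$ in $\overline{\mathcal{F}}_{\pdeor(n\cdot)}^{\,r}$ — precisely the role flagged by the truncation in the definition \eqref{eq:def-extended-energy}.
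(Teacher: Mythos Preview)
Your limsup argument is essentially the paper's Proposition~\ref{thm:limsup}: for each fixed $n$ build recovery sequences via Step~2 of Proposition~\ref{lemma:limit-maps} (applied with $A^i(n\cdot)$), then diagonalize in $(n,\varepsilon)$ using the uniform $L^p$ bound guaranteed by the truncation $\|w_n\|\le r$.

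The liminf argument, however, contains a genuine error in the key unfolding identity. With $U_n=T_{\varepsilon_n}u_{\varepsilon_n}$ and $\xi=\varepsilon_n\lfloor x/\varepsilon_n\rfloor+\varepsilon_n y$, periodicity gives $A^i(\xi/\varepsilon_n)=A^i(\lfloor x/\varepsilon_n\rfloor+y)=A^i(y)$, so the correct computation is
\[
\sum_{i=1}^N\frac{\partial}{\partial y_i}\bigl(A^i(y)\,U_n(x,y)\bigr)=\varepsilon_n\,\bigl(\pdeep{\varepsilon_n}u_{\varepsilon_n}\bigr)(\xi),
\]
with $A^i(y)$, \emph{not} $A^i(ny)$. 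Direct unfolding at scale $\varepsilon_n$ therefore places $U_n$ (approximately) in the class governed by $\mathcal{C}^{\pdeor}$, not $\mathcal{C}^{\pdeor(n\cdot)}$; your relabeling ``$\varepsilon_n=1/n$'' does not help, since even for that exact subsequence the identity still carries $A^i(y)$. A second gap is the $x$-constraint: $U_n(\cdot,y)$ is piecewise constant in $x$, so ``analogously'' does not apply---the distributional $x$-divergence of $\int_Q A^i(\cdot)U_n(\cdot,y)\,dy$ is a measure concentrated on cell faces, and you give no argument that it vanishes in $W^{-1,p}$.

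The paper's liminf proof (Proposition~\ref{thm:liminf}) takes a substantially different route precisely to sidestep these issues. It first passes to $v_{\varepsilon}=\mathcal{A}(x/\varepsilon)u_\varepsilon$, turning the oscillating constraint into the \emph{constant-coefficient} divergence constraint while absorbing the oscillation into an auxiliary density $g(y,\xi)=f(\mathcal{A}(y)^{-1}\xi)$. It then introduces a second integer parameter $\nu$, rescales space by $\theta_{\nu,n}=\nu\varepsilon_n\lfloor 1/(\nu\varepsilon_n)\rfloor$ so that the frequency $1/\varepsilon_n$ is replaced by the integer $\nu\lfloor 1/(\nu\varepsilon_n)\rfloor$, and only then unfolds---at the fixed scale $1/\nu$, not at $\varepsilon_n$. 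After diagonalizing in $(\nu,n)$ this produces the constraint with $A^i\bigl(\lfloor 1/(\nu\varepsilon_\nu)\rfloor\,\cdot\,\bigr)$ and hence the index $n$ appearing in $\overline{\mathcal{F}}^{\,r}_{\pdeor(n\cdot)}$. Your single-scale unfolding skips this entire mechanism and does not land in the right constraint class.
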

Before starting the proof of Theorem \ref{thm:main-homogenized-pde}, we first state without proving a corollary of \cite[Lemma 2.15]{fonseca.muller} and one of \cite[Lemma 2.8]{fonseca.kromer}, and we prove an adaptation of \cite[Lemma 2.15]{fonseca.muller} to our framework.
\begin{lemma}
\label{lemma:cor-f-m}
Let $1<p<+\infty$. Let $\{u_{\ep}\}$ be a bounded sequence in $L^p(\Omega;\R^N)$ such that
\be{eq:cor-f-m}{\rm div}\,u_{\ep}\to 0\quad\text{strongly in }W^{-1,p}(\Omega)\ee
and 
$$u_{\ep}\wk u\quad\text{weakly in }L^p(\Omega;\R^N).$$
Then there exists a $p$-equiintegrable sequence $\{\tilde{u}_{\ep}\}$ such that
\bas
&{\rm div }\,\tilde{u}_{\ep}=0\quad\text{in }W^{-1,p}(\Omega)\quad\text{for every }\ep,\\
&\tilde{u}_{\ep}-u_{\ep}\to 0\quad\text{strongly in }L^q(\Omega;\R^N)\quad\text{for every }1\leq q<p,\\
&\tilde{u}_{\ep}\wk u\quad\text{weakly in }L^p(\Omega;\R^N).
\end{align*}
\end{lemma}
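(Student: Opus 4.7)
The plan is to deduce this from the general projection-and-truncation construction of [Fonseca--M\"uller, Lemma 2.15] applied to $\mathcal{A} = \mathrm{div}$. The divergence operator satisfies the constant rank condition \eqref{cr} (as already noted in the proof of Lemma \ref{lemma:div}), and the projection $\mathbb{T}$ of that lemma is precisely the building block needed. After translation and rescaling one may assume $\Omega \Subset Q$.

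I would proceed in two main steps. \emph{First}, extend $u_\ep$ by zero, multiply by cutoffs $\varphi_k \in C^\infty_c(\Omega;[0,1])$ with $\varphi_k \nearrow 1$, and use the product rule together with the compact embedding $L^p \hookrightarrow W^{-1,p}$ to extract by diagonalization (mirroring Step~1 of the proof of Lemma \ref{lemma:projection}) a $Q$-periodic sequence $\hat u_\ep$ with $\hat u_\ep \wk u$ weakly in $L^p(Q;\R^N)$ and $\mathrm{div}\,\hat u_\ep \to 0$ strongly in $W^{-1,p}(Q)$. Applying $\mathbb{T}$ to the zero-mean part of $\hat u_\ep$ and restoring the mean produces $v_\ep$ that, by property (P2), is exactly divergence-free, satisfies $v_\ep \wk u$ in $L^p$, and, by property (P3), obeys
\[
\|v_\ep - \hat u_\ep\|_{L^p(Q;\R^N)} \le C\,\|\mathrm{div}\,\hat u_\ep\|_{W^{-1,p}(Q)} \to 0.
\]
\emph{Second}, apply the Fonseca--M\"uller--Pedregal decomposition lemma to $v_\ep$ to extract a $p$-equiintegrable sequence $g_\ep \wk u$ in $L^p$ with $v_\ep - g_\ep \to 0$ in measure; since both sequences are bounded in $L^p$, this upgrades to $v_\ep - g_\ep \to 0$ strongly in $L^q$ for every $1 \le q < p$. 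Truncation destroys the exact divergence-free property, so reproject: set $\tilde u_\ep := \mathbb{T}(g_\ep - \bar g_\ep) + \bar g_\ep$, where $\bar g_\ep$ denotes the mean of $g_\ep$ over $Q$. By (P2), $\mathrm{div}\,\tilde u_\ep = 0$; the single-variable analog of (P4) of Lemma \ref{lemma:div} ensures $\tilde u_\ep$ remains $p$-equiintegrable; and since $v_\ep - \bar v_\ep$ is divergence-free and mean-zero, (P3) forces $\mathbb{T}(v_\ep - \bar v_\ep) = v_\ep - \bar v_\ep$, so that
\[
\tilde u_\ep - g_\ep = -\bigl(h_\ep - \mathbb{T}h_\ep\bigr), \qquad h_\ep := (g_\ep - v_\ep) - (\bar g_\ep - \bar v_\ep),
\]
and the $L^q$-boundedness of $\mathbb{T}$ (as a Mikhlin-type Fourier multiplier) together with $h_\ep \to 0$ in $L^q$ gives $\tilde u_\ep - g_\ep \to 0$ strongly in $L^q$ for every $q<p$. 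Restricting to $\Omega$ and collecting the three approximation errors yields the claim.

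The main obstacle is the incompatibility between the two operations: the decomposition lemma supplies $p$-equiintegrability but breaks the divergence-free constraint, while $\mathbb{T}$ restores the constraint but could \emph{a priori} spoil equiintegrability. The crucial ingredient making the loop close is property (P4) of Lemma \ref{lemma:div}, which guarantees precisely that $\mathbb{T}$ preserves $p$-equiintegrability. This is also what forces the conclusion to be $L^q$-closeness with $q<p$ rather than $L^p$-closeness: in the final step one can only exploit the strong $L^q$-convergence of $g_\ep - v_\ep$, whereas in $L^p$ only weak convergence to zero is available, which is insufficient to extract a strong rate through $\mathbb{T}$.
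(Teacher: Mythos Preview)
Your approach is correct. The paper does not prove this lemma but states it as a corollary of \cite[Lemma~2.15]{fonseca.muller} applied to $\mathcal{A}=\mathrm{div}$; your sketch reconstructs precisely that argument---equiintegrable modification combined with the Fourier-multiplier projection $\mathbb{T}$ of Lemma~\ref{lemma:div} (which is \cite[Lemma~2.14]{fonseca.muller})---and the reason only $L^q$-closeness for $q<p$ survives is exactly as you identify.
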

\begin{remark}
\label{rk:add-for-nonsmooth}
A direct adaptation of the proof of \cite[Lemma 2.15]{fonseca.muller} yields also that the thesis of Lemma \ref{lemma:cor-f-m} still holds if we replace \eqref{eq:cor-f-m} with the condition
$${\rm div}\,u_{\ep}\to 0\quad\text{strongly in }W^{-1,q}(\Omega)$$
for every $1\leq q<p$.
\end{remark}
\begin{lemma}
\label{lemma:A-free-ext}
Let $1<p<+\infty$, and let $D\subset Q$. Let $\{u_{\ep}\}\subset L^p(D;\R^N)$ be $p$-equiintegrable, with
$$u_{\ep}\wk 0\quad\text{weakly in }L^p(D;\R^N),$$
and
$${\rm div}\,u_{\ep}\to 0\quad\text{strongly in }W^{-1,p}(D).$$
Then there exists a $p$-equiintegrable sequence $\{\tilde{u}_{\ep}\}\subset L^p(Q;\R^N)$ such that
\bas
&\tilde{u}_{\ep}-u_{\ep}\to 0\quad\text{strongly in }L^p(D;\R^N),\\
&\tilde{u}_{\ep}\to 0\quad\text{strongly in }L^p(Q\setminus D;\R^N),\\
&{\rm div}\,\tilde{u}_{\ep}=0\quad\text{in }W^{-1,p}(Q),\\
&\|\tilde{u}_{\ep}\|_{L^p(Q;\R^N)}\leq C\|u_{\ep}\|_{L^p(D;\R^N)},\\
&\iQ\tilde{u}_{\ep}(x)\,dx=0\quad\text{for every }\ep.
\end{align*}
\end{lemma}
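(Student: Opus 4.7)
The approach is to localize $u_\ep$ strictly inside $D$ with a smooth cutoff, subtract its mean so that we land in the zero-mean subspace, and then apply the divergence-free projection operator $\mathbb{T}$ supplied by Lemma \ref{lemma:div}. First I choose smooth cutoffs $\varphi_k\in C_c^\infty(Q;[0,1])$ supported in $D$ such that the measure of $\{x\in D:\varphi_k(x)<1\}$ tends to $0$ as $k\to+\infty$ (by inner regularity of Lebesgue measure, exhausting $D$ by compact subsets). Extending $u_\ep$ by $0$ outside $D$, set
\[
v_{\ep,k}(x):=\varphi_k(x)u_\ep(x)-c_{\ep,k},\qquad c_{\ep,k}:=\iQ \varphi_k(y)u_\ep(y)\,dy.
\]
Then $v_{\ep,k}\in L^p(Q;\R^N)$ has zero mean, $\|v_{\ep,k}\|_{L^p(Q)}\leq 2\|u_\ep\|_{L^p(D)}$, and $\{v_{\ep,k}\}$ is $p$-equiintegrable in the joint index $(\ep,k)$ since multiplication by a bounded cutoff and subtracting a bounded constant do not destroy equiintegrability.

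Next I estimate $\div v_{\ep,k}$ in $W^{-1,p}(Q)$. For $\psi\in C_c^\infty(Q)$, the product rule gives
\[
\langle \div v_{\ep,k},\psi\rangle=\langle \div u_\ep,\varphi_k\psi\rangle_{W^{-1,p}(D),W^{1,p'}_0(D)}+\int_D(u_\ep\cdot\nabla\varphi_k)\psi\,dx.
\]
Since $\varphi_k\psi\in W^{1,p'}_0(D)$, the first term is controlled by $C_k\|\div u_\ep\|_{W^{-1,p}(D)}\to 0$ as $\ep\to 0$. The second term can be read as the pairing with the function $(u_\ep\cdot\nabla\varphi_k)\chi_D\in L^p(Q)$, which is bounded in $L^p(Q)$ and converges weakly to $0$ (for fixed $k$, since $u_\ep\wk 0$ in $L^p(D)$ and $\nabla\varphi_k$ is fixed and smooth); by the compact embedding $L^p(Q)\hookrightarrow W^{-1,p}(Q)$, this forces strong convergence to $0$ in $W^{-1,p}(Q)$. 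Hence, for each fixed $k$, $\|\div v_{\ep,k}\|_{W^{-1,p}(Q)}\to 0$ as $\ep\to 0$.

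Now set $\tilde u_{\ep,k}:=\mathbb{T}v_{\ep,k}$. Properties (P1)--(P3) of Lemma \ref{lemma:div} yield $\div\tilde u_{\ep,k}=0$, $\iQ\tilde u_{\ep,k}=0$ (because $\mathbb{T}$ kills constants, so it annihilates the zero Fourier mode), the uniform bound $\|\tilde u_{\ep,k}\|_{L^p(Q)}\leq C\|u_\ep\|_{L^p(D)}$, and
\[
\|\tilde u_{\ep,k}-v_{\ep,k}\|_{L^p(Q)}\leq C\|\div v_{\ep,k}\|_{W^{-1,p}(Q)}\longrightarrow 0\quad\text{as }\ep\to 0\text{ for fixed }k,
\]
while property (P4) delivers $p$-equiintegrability of the family $\{\tilde u_{\ep,k}\}$. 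To compare $\tilde u_{\ep,k}$ with $u_\ep$ on $D$, write $v_{\ep,k}-u_\ep=(\varphi_k-1)u_\ep-c_{\ep,k}$: the first piece lives on $\{\varphi_k<1\}$, whose measure shrinks as $k\to+\infty$, so $p$-equiintegrability makes its $L^p(D)$-norm uniformly small in $\ep$; the constant $c_{\ep,k}\to 0$ as $\ep\to 0$ since $u_\ep\wk 0$. On $Q\setminus D$, $v_{\ep,k}\equiv -c_{\ep,k}\to 0$ in $L^p$. A standard Attouch-type diagonal extraction produces $k(\ep)\to+\infty$ such that $\tilde u_\ep:=\tilde u_{\ep,k(\ep)}$ satisfies all five claimed properties.

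The main obstacle is the boundary divergence term $u_\ep\cdot\nabla\varphi_k$: it is bounded in $L^p$ but \emph{not} small there, and must be handled through the weak convergence $u_\ep\wk 0$ together with the compact embedding $L^p\hookrightarrow W^{-1,p}$, which is why the limits must be taken in the order "first $\ep\to 0$, then $k\to+\infty$" before diagonalizing. The joint $p$-equiintegrability in $(\ep,k)$ is what allows this diagonal procedure to preserve equiintegrability in the end.
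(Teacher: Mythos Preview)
Your argument is correct and follows the standard route for this type of $\mathscr{A}$-free extension: localize by a cutoff, subtract the mean to land in the zero-average subspace, apply the Fourier-multiplier projection $\mathbb{T}$ of Lemma~\ref{lemma:div}, and diagonalize in $(k,\ep)$. The paper does not supply its own proof here but records the lemma as a corollary of \cite[Lemma 2.8]{fonseca.kromer}, whose argument proceeds along exactly these lines; one minor remark is that the zero-mean property $\iQ \mathbb{T}v=0$ you invoke is not listed among (P1)--(P5) but is immediate from the explicit Fourier representation of $\mathbb{T}$ given in the proof of Lemma~\ref{lemma:div}.
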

More generally, we have
\begin{lemma}
\label{lemma:truncation}
Let $1<p<+\infty$, $u\in L^p(\Omega;\rd)$ and let $\{u^{\ep}\}\subset L^p(\Omega;\rd)$ be such that
\begin{eqnarray}
&&\label{eq:1}\,u_{\e}\wk u\text{ weakly in }L^p(\Omega;\rd)\\
&&\label{eq:2}\pdeep{\e}u_{\e}\to 0\text{ strongly in  }W^{-1,p}(\Omega;\rl).
\end{eqnarray}
Then there exists a $p$-equiintegrable sequence $\{\tilde{u}_{\e}\}$ such that
\begin{eqnarray*}
&&\tilde{u}_{\e}\wk u\quad\text{weakly in }L^p(\Omega;\rd),\\
&&\pdeep{\e}\tilde{u}_{\e}\to 0\quad\text{strongly in }W^{-1,q}(\Omega;\rl)\quad\text{for every } 1\leq q<p,\\
&&\tilde{u}_{\e}-u_{\e}\to 0\quad\text{strongly in }L^q(\Omega;\rd)\quad\text{for every } 1\leq q<p.
\end{eqnarray*}
\end{lemma}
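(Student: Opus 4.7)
The plan is to reduce the problem to the classical divergence-free truncation of Lemma \ref{lemma:cor-f-m} by packaging the oscillating coefficients into a matrix-valued vector field, and then inverting pointwise via the uniform ellipticity \eqref{eq:A-invertibility}. First, I would set
$$V_{\ep}(x):=\mathcal{A}(x/\ep)\,u_{\ep}(x)\in\M^{N\times l}\cong\rd,$$
and denote its $j$-th column by $V_\ep^{(j)}\in L^p(\Omega;\Rn)$, $j=1,\ldots,l$. Since the $(i,j)$-entry of $V_\ep$ equals the $j$-th component of $A^i(x/\ep)u_\ep(x)$, a direct computation shows
$${\rm div}\,V_{\ep}^{(j)}=\sum_{i=1}^N\partial_{x_i}(V_\ep)_{ij}=(\pdeep{\e}u_\ep)_j,$$
so each $\{V_\ep^{(j)}\}$ is bounded in $L^p(\Omega;\Rn)$ with divergence converging strongly to zero in $W^{-1,p}(\Omega)$.

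Next, I would apply Lemma \ref{lemma:cor-f-m} to each column (passing to a common weakly convergent subsequence if needed) to obtain $p$-equiintegrable sequences $\{\tilde{V}_\ep^{(j)}\}$ satisfying ${\rm div}\,\tilde{V}_\ep^{(j)}=0$ in $W^{-1,p}(\Omega)$ and $\tilde{V}_\ep^{(j)}-V_\ep^{(j)}\to 0$ strongly in $L^q(\Omega;\Rn)$ for every $1\leq q<p$. Reassembling the columns into $\tilde{V}_\ep\in L^p(\Omega;\rd)$, and invoking \eqref{eq:A-invertibility}, which yields the uniform bound $\|\mathcal{A}(y)^{-1}\|\leq 1/\gamma$ for every $y\in\Rn$, I would define the candidate truncation by
$$\tilde{u}_\ep(x):=\mathcal{A}(x/\ep)^{-1}\tilde{V}_\ep(x).$$
The pointwise bound $|\tilde{u}_\ep(x)|\leq\gamma^{-1}|\tilde{V}_\ep(x)|$ then transfers $p$-equiintegrability from $\tilde{V}_\ep$ to $\tilde{u}_\ep$.

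The three conclusions follow easily. From the identity $\mathcal{A}(x/\ep)\tilde{u}_\ep(x)=\tilde{V}_\ep(x)$, read componentwise as $(A^i(x/\ep)\tilde{u}_\ep(x))_j=(\tilde{V}_\ep)_{ij}$, one obtains
$$(\pdeep{\e}\tilde{u}_\ep)_j=\sum_{i=1}^N\partial_{x_i}(\tilde{V}_\ep)_{ij}={\rm div}\,\tilde{V}_\ep^{(j)}=0$$
in $W^{-1,p}(\Omega)$, which a fortiori gives strong convergence in every $W^{-1,q}(\Omega;\rl)$ with $1\leq q<p$. Writing $\tilde{u}_\ep-u_\ep=\mathcal{A}(x/\ep)^{-1}(\tilde{V}_\ep-V_\ep)$ and using the uniform bound on $\mathcal{A}^{-1}$ yields $\tilde{u}_\ep-u_\ep\to 0$ strongly in $L^q(\Omega;\rd)$ for every $1\leq q<p$; finally, $\tilde{u}_\ep\wk u$ weakly in $L^p(\Omega;\rd)$ follows from the $L^p$ boundedness of $\{\tilde{u}_\ep\}$ combined with this $L^q$ closeness via a standard density argument on test functions in $L^{p'}(\Omega;\rd)$. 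The only genuinely critical ingredient is the uniform invertibility \eqref{eq:A-invertibility}: it is precisely what makes the pointwise inversion of $\mathcal{A}(\cdot/\ep)$ at every scale meaningful and uniformly bounded, allowing the flux-level truncation $\tilde{V}_\ep$ to be lifted back to the field-level truncation $\tilde{u}_\ep$ without sacrificing $p$-equiintegrability.
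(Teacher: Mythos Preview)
Your argument is correct but takes a more elaborate route than the paper. The paper argues directly at the level of $u_\ep$ by a scalar truncation: setting $u_\ep^k:=\tau_k(u_\ep)$ with $\tau_k(z)=z$ for $|z|\le k$ and $\tau_k(z)=kz/|z|$ otherwise, one has $|u_\ep^k|\le k$ (so trivial $p$-equiintegrability), $\|u_\ep^k-u_\ep\|_{L^q}^q\le 2^q k^{q-p}\sup_\ep\|u_\ep\|_{L^p}^p\to 0$ as $k\to\infty$ for every $q<p$, and hence $\pdeep{\ep}u_\ep^k\to 0$ in $W^{-1,q}$ as $k\to\infty$ and $\ep\to 0$, using only that the coefficients $A^i(\cdot/\ep)$ are uniformly bounded. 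A diagonal choice $k=k(\ep)$ finishes. No flux reformulation, no projection, and no appeal to the ellipticity \eqref{eq:A-invertibility}.

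Your approach instead passes to the flux $V_\ep=\mathcal{A}(\cdot/\ep)u_\ep$, invokes Lemma~\ref{lemma:cor-f-m} columnwise to obtain an exactly divergence-free $\tilde V_\ep$, and inverts via $\mathcal{A}^{-1}$. This buys a marginally stronger conclusion ($\pdeep{\ep}\tilde u_\ep=0$ identically rather than merely $\to 0$ in $W^{-1,q}$), at the cost of requiring \eqref{eq:A-invertibility} and the machinery of Lemma~\ref{lemma:cor-f-m}. One technical wrinkle you pass over: Lemma~\ref{lemma:cor-f-m} is stated for sequences with an identified weak $L^p$ limit, whereas $V_\ep=\mathcal{A}(\cdot/\ep)u_\ep$ need not converge weakly along the full sequence (its two-scale limit, and thus its weak limit, may depend on the chosen subsequence). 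Your parenthetical ``passing to a common weakly convergent subsequence if needed'' then produces $\tilde u_\ep$ only along that subsequence, not for the full index set as the statement demands. This is repairable---the construction behind Lemma~\ref{lemma:cor-f-m} does not genuinely use the limit---but the paper's elementary truncation sidesteps the issue altogether.
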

{\begin{proof}
The proof follows by adapting \cite[Lemma 2.15]{fonseca.muller}. We sketch it for convenience of the reader.

Consider the truncation function
$$\tau_k(z):=\begin{cases}z&\text{if }|z|\leq k,\\
k\Big(\frac{z}{|z|}\Big)&\text{if }|z|>k,\end{cases}$$
and set $u^k_{\ep}:=\tau_k\circ u_{\ep}$. Then by \eqref{eq:1},
$$u^k_{\ep}\wk u\quad\text{weakly in }L^p(\Omega;\rd)$$
as $k\to +\infty$ and $\ep\to 0$, in this order. Moreover, by \eqref{eq:1}
$$\|u^k_{\ep}-u_{\ep}\|^q_{L^q(\Omega;\rd)}\leq \int_{|u_{\ep}|\geq k}2^q|u_{\ep}(x)|^q\,dx\leq\frac{2^q}{k^{p-q}}\iO|u_{\ep}(x)|^p\,dx\to 0$$
as $k\to +\infty$, for every $1\leq q<p$. Therefore,
$$\pdeep{\ep}(u^k_{\ep}-u_{\ep})\to 0\quad\text{strongly in }W^{-1,q}(\Omega;\rl)$$
as $k\to +\infty$, for every $1\leq q<p$, and by \eqref{eq:2}
$$\pdeep{\ep}u^k_{\ep}\to 0\quad\text{strongly in }W^{-1,q}(\Omega;\rl)$$
as $k\to +\infty$ and $\ep\to 0$ in this order, for every $1\leq q<p$. The thesis follows now by a diagonal argument.
\end{proof}}
The following propositions is a corollary of \cite[Proposition 3.5 (ii)]{fonseca.kromer}.
\begin{proposition}
\label{prop:f-k}
Let $f:\rd\to [0,+\infty)$ be a continuous map satisfying \eqref{eq:growth-p-pde} for some $1<p<+\infty$. Let $\{u_{\ep}\}\subset L^p(\Omega;\rd)$ be a bounded sequence and let $\{\tilde{u}_{\e}\}\subset L^p(\Omega;\rd)$ be $p$-equiintegrable and such that
$$u_{\ep}-\tilde{u}_{\ep}\to 0\quad\text{in measure}.$$
Then
$$\liminf_{\ep\to 0}\iO f(u_{\ep}(x))\,dx\geq\liminf_{\ep\to 0}\iO f(\tilde{u}_{\ep}(x))\,dx.$$
Moreover, if $g:\R^N\times \rd\to[0,+\infty)$ satisfies
\begin{enumerate}
\item[(i)] $g(\cdot,\xi)$ is measurable and $Q$-periodic for every $\xi\in\R^d$,
\item[(ii)] $g(y,\cdot)$ is continuous for a.e. $y\in\R^N$,
\item[(iii)] there exists a constant $C$ such that
$$0\leq g(y,\xi)\leq C(1+|\xi|^p)\quad\text{for a.e. }y\in\R^N\text{and }\xi\in\R^d,$$
then
$$\liminf_{\ep\to 0}\iO g\Big(\frac{x}{\ep}, u_{\ep}(x)\Big)\,dx\geq\liminf_{\ep\to 0}\iO g\Big(\frac{x}{\ep},\tilde{u}_{\ep}(x)\Big)\,dx.$$
\end{enumerate}
\end{proposition}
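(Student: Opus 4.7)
My plan is to prove this by exploiting that the $p$-growth of $f$ together with the $p$-equiintegrability of $\{\tilde u_\ep\}$ makes the family $\{f(\tilde u_\ep)\}$ equiintegrable, and then to use the convergence in measure to compare $f(\tilde u_\ep)$ and $f(u_\ep)$ on a ``good set'' where both $\tilde u_\ep$ is bounded and $u_\ep$ is close to $\tilde u_\ep$. It suffices to establish the inequality $\liminf_\ep \iO f(\tilde u_\ep)\,dx \leq \liminf_\ep \iO f(u_\ep)\,dx$.

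The main step is a splitting argument. For fixed $M, \delta > 0$, set
\[
E_\ep^{M,\delta} := \{x \in \Omega : |\tilde u_\ep(x)| \leq M\} \cap \{x \in \Omega : |u_\ep(x) - \tilde u_\ep(x)| \leq \delta\}.
\]
On $E_\ep^{M,\delta}$ one has $|u_\ep| \leq M + \delta$, so by uniform continuity of $f$ on the closed ball of radius $M+\delta$,
\[
|f(\tilde u_\ep) - f(u_\ep)| \leq \omega_M(\delta), \qquad \omega_M(\delta) \to 0 \text{ as } \delta \to 0^+.
\]
Off this good set, the measure of the complement is controlled by
\[
|\Omega \setminus E_\ep^{M,\delta}| \leq |\{|\tilde u_\ep| > M\}| + |\{|u_\ep - \tilde u_\ep| > \delta\}|.
\]
The second term tends to zero as $\ep \to 0$ by the assumed convergence in measure. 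For the first, Chebyshev together with the boundedness of $\{\tilde u_\ep\}$ in $L^p$ (a consequence of its $p$-equiintegrability on the bounded set $\Omega$) gives $|\{|\tilde u_\ep| > M\}| \leq M^{-p}\|\tilde u_\ep\|^p_{L^p} \to 0$ uniformly in $\ep$ as $M \to \infty$. Since $f(\tilde u_\ep) \leq C(1+|\tilde u_\ep|^p)$ and $\{|\tilde u_\ep|^p\}$ is equiintegrable, $\{f(\tilde u_\ep)\}$ is equiintegrable, hence $\int_{\Omega \setminus E_\ep^{M,\delta}} f(\tilde u_\ep)\,dx$ can be made arbitrarily small by first sending $\ep \to 0$, then $\delta \to 0^+$, then $M \to \infty$.

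Combining these controls, for all $M$ large and $\delta$ small,
\[
\iO f(\tilde u_\ep)\,dx \leq \int_{E_\ep^{M,\delta}} f(u_\ep)\,dx + |\Omega|\,\omega_M(\delta) + \int_{\Omega \setminus E_\ep^{M,\delta}} f(\tilde u_\ep)\,dx \leq \iO f(u_\ep)\,dx + |\Omega|\omega_M(\delta) + o_{\ep}(1),
\]
using $f \geq 0$ on the first term. Taking $\liminf_{\ep \to 0}$ and then letting $\delta \to 0^+$ and $M \to \infty$ yields the desired inequality.

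For the moreover part with $g(y,\xi)$, the argument is the same modulo the subtlety that continuity in $\xi$ for a.e.\ $y$ does not automatically give uniform continuity in $\xi$ uniformly in $y$. I would invoke Scorza-Dragoni to extract, for each $\eta > 0$, a compact $K_\eta \subset Q$ with $|Q \setminus K_\eta| < \eta$ on which $g$ is jointly continuous and hence uniformly continuous on $K_\eta \times B(0,M+\delta)$. The contribution from $\{x \in \Omega : x/\ep \mod Q \notin K_\eta\}$ is handled by the $p$-growth of $g$ combined with equiintegrability of $\{\tilde u_\ep\}$ (its measure being of order $|\Omega|\cdot|Q \setminus K_\eta|$ for small $\ep$ by the standard Riemann-Lebesgue-type argument). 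The rest of the splitting is identical. I expect this last point --- the Scorza-Dragoni passage and the estimate on the small-measure periodic set in $x$ --- to be the only nontrivial part, as the core truncation/equiintegrability mechanism is entirely parallel to the first statement.
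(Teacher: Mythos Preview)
Your argument is correct and is essentially the standard proof of this fact; the paper itself does not prove the proposition but simply records it as a corollary of \cite[Proposition~3.5~(ii)]{fonseca.kromer}, and your splitting/equiintegrability argument (together with Scorza--Dragoni for the Carath\'eodory case) is exactly the mechanism behind that result, and indeed mirrors the paper's own proof of the closely related Proposition~\ref{prop:to-add1}. One minor point of bookkeeping: the error term you denote ``$o_\ep(1)$'' still depends on $M$ through the Chebyshev bound on $|\{|\tilde u_\ep|>M\}|$, so the correct order of limits is to first pass to a subsequence realizing $\liminf_\ep\int_\Omega f(u_\ep)$, then take $\liminf$ in $\ep$, then send $\delta\to 0$, and only afterwards $M\to\infty$; with this adjustment the chain of inequalities goes through as written.
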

The next proposition is another corollary of \cite[Proposition 3.5 (ii)]{fonseca.kromer}.
{We sketch its proof for the convenience of the reader.}
\begin{proposition}
\label{prop:to-add1}
Let $1\leq p<+\infty$ and $\lambda\in (0,1]$. Let $g:\R^N\times \rd\to[0,+\infty)$ be such that
\begin{enumerate}
\item[(i)] $g(\cdot,\xi)$ is measurable and $Q$-periodic for every $\xi\in\R^d$,
\item[(ii)] $g(y,\cdot)$ is continuous for a.e. $y\in\R^N$,
\item[(iii)] there exists a constant $C$ such that
$$0\leq g(y,\xi)\leq C(1+|\xi|^p)\quad\text{for a.e. }y\in\R^N\text{and }\xi\in\R^d,$$
\end{enumerate}
and let $V$ be a $p$-equiintegrable subset of $L^p(\Omega\times Q;\R^d)$.
Then there exists a constant $C$ such that
$$\Big\|g\Big(\frac{y}{\lambda}, v_1(x,y)\Big)-g\Big(\frac{y}{\lambda}, v_2(x,y)\Big)\Big\|_{L^1(\Omega\times Q)}\leq C\|v_1-v_2\|_{L^p(\Omega\times Q;\R^d)}$$
for every $v_1,v_2\in V$.
\end{proposition}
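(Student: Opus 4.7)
The plan is to follow the approximation strategy of \cite{fonseca.kromer}[Proposition 3.5(ii)]: reduce to a version of $g$ that enjoys a Lipschitz-type growth bound, then apply H\"older's inequality together with the uniform $L^p$-bound on $V$ coming from equi-integrability. First, I would extract from the $p$-equiintegrability of $V$ on the bounded measure space $\Omega \times Q$ two consequences: a uniform $L^p$-bound $R := \sup_{v \in V}\|v\|_{L^p(\Omega \times Q;\R^d)} < \infty$, and a uniform tail estimate, namely for every $\eta>0$ there exists $M_\eta>0$ such that
\[
\sup_{v \in V} \int_{\{|v| > M_\eta\}} (1+|v|^p)\,dx\,dy \le \eta.
\]

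Next, I would mollify $g$ in the $\xi$-variable by convolving with a smooth kernel $\rho_\delta$, obtaining $g_\delta(y,\xi):=\int_{\R^d} g(y,\xi-z)\rho_\delta(z)\,dz$. This $g_\delta$ is smooth in $\xi$, inherits (i)--(iii) with possibly enlarged constants, and satisfies the growth-Lipschitz estimate
\[
|g_\delta(y,\xi_1)-g_\delta(y,\xi_2)| \le C_\delta\bigl(1+|\xi_1|^{p-1}+|\xi_2|^{p-1}\bigr)|\xi_1-\xi_2|
\]
uniformly in $y$. For $v_1,v_2 \in V$, I would then split
\[
\int_{\Omega\times Q} \Bigl|g\bigl(\tfrac{y}{\lambda},v_1\bigr)-g\bigl(\tfrac{y}{\lambda},v_2\bigr)\Bigr| \,dx\,dy \le I_\delta + E_\delta,
\]
where $I_\delta$ is the analogous integral with $g$ replaced by $g_\delta$, and $E_\delta$ collects the two mollification errors $\int |g-g_\delta|(y/\lambda,v_i)$. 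By H\"older's inequality applied to the Lipschitz-type bound above,
\[
I_\delta \le C_\delta\bigl(|\Omega\times Q|^{1/p'}+2R^{p-1}\bigr)\|v_1-v_2\|_{L^p(\Omega\times Q;\R^d)}.
\]
The error $E_\delta$ is further split over $\{|v_i|\le M_\eta\}$, where uniform continuity of $g$ on the compact set $\overline{Q}\times \overline{B}_{M_\eta}$ (together with $Q$-periodicity) forces the error to vanish uniformly in $V$ as $\delta\to 0$, and over $\{|v_i|>M_\eta\}$, where (iii) and the tail estimate yield a contribution of order $C\eta$ uniformly in $v \in V$.

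The main obstacle is the interplay between the $\delta$-dependent Lipschitz constant $C_\delta$ (which generically blows up as $\delta\to 0$) and the $\delta$-vanishing of the uniform-continuity contribution to $E_\delta$: one cannot simultaneously shrink $\delta$ and bound $C_\delta$ a priori. The crucial idea, following \cite{fonseca.kromer}, is to fix $\eta$ (and hence $M_\eta$) first so that the tail piece of $E_\delta$ is absorbed into a constant multiple of $\|v_1-v_2\|_{L^p}$ via the boundedness $\|v_1-v_2\|_{L^p}\le 2R$, then choose $\delta=\delta_\eta$ small enough that the uniform-continuity error on $\overline{B}_{M_\eta}$ matches, thereby freezing $C_\delta = C_{\delta_\eta}$ at a fixed value. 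The resulting constant $C=C(V,g,p,\lambda)$ depends on $R$, $M_\eta$, and $\delta_\eta$, but is independent of the particular $v_1,v_2\in V$, giving the claim.
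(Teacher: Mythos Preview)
Your mollification approach has a genuine gap at the ``absorption'' step, and the gap is not fixable because the literal Lipschitz inequality you are trying to prove is in fact false in general. Take $g(y,\xi)=|\xi|^{1/3}$ (which satisfies (i)--(iii) for any $p\ge 1$), let $V$ be the unit ball of $L^{2p}(\Omega\times Q)$ (a $p$-equiintegrable set), and test with $v_1\equiv 0$, $v_2\equiv t$ for small $t>0$: the left-hand side behaves like $t^{1/3}$ while $\|v_1-v_2\|_{L^p}\sim t$, so no finite constant $C$ works. In your argument this manifests exactly where you say the tail piece $C\eta$ of $E_\delta$ is ``absorbed into a constant multiple of $\|v_1-v_2\|_{L^p}$ via the boundedness $\|v_1-v_2\|_{L^p}\le 2R$'': an \emph{upper} bound on $\|v_1-v_2\|_{L^p}$ cannot turn a fixed positive error into something dominated by $\|v_1-v_2\|_{L^p}$ (you would need a lower bound). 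Once $\eta$ and $\delta_\eta$ are frozen, $E_\delta$ is bounded by a fixed positive number, not by $C\|v_1-v_2\|_{L^p}$, and the proof collapses for $v_1,v_2$ close in $L^p$.

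What the paper actually proves (and all that is used downstream) is the \emph{uniform continuity} statement: for every $\varepsilon>0$ there is $\delta>0$ such that $\|v_1-v_2\|_{L^p}\le\delta$ implies $\|g(y/\lambda,v_1)-g(y/\lambda,v_2)\|_{L^1}\le\varepsilon$, uniformly over $V$ and $\lambda\in(0,1]$. The paper's proof does this directly, without mollification: it invokes Scorza--Dragoni to find a compact $K\subset Q$ on which $g$ is jointly continuous, uses periodicity to control $|Q\setminus\lambda(K+\mathbb Z^N)|$ uniformly in $\lambda$, and combines equiintegrability with Chebyshev to handle the sets $\{|v_i|\ge R\}$ and $\{|v_1-v_2|\ge\delta_2\}$. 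Your mollification route can be salvaged to prove this weaker (correct) statement---fix $\varepsilon$, choose $\eta$ and then $\delta$ so that $E_\delta\le\varepsilon/2$, and finally pick the threshold on $\|v_1-v_2\|_{L^p}$ so that $I_\delta\le C_\delta\|v_1-v_2\|_{L^p}\le\varepsilon/2$---but note that your appeal to ``uniform continuity of $g$ on $\overline{Q}\times\overline{B}_{M_\eta}$'' is unjustified for a Carath\'eodory integrand; you too need Scorza--Dragoni at that point.
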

{\begin{proof}
Fix $\ep>0$. By the $p$-equiintegrability of $V$ and $(iii)$ there exists $\delta_1>0$ such that for every $E\subset \Omega\times Q$ measurable with $|E|\leq \delta_1$, and for all $v\in V$, there holds
\be{eq:1-prop}\int_E \Big|g\Big(\frac{y}{\lambda}, v(x,y)\Big)\Big|\,dx\,dy<\frac{\ep}{7}.\ee
In view of the $p$-equiintegrability of $V$, there exists also $R>0$ such that
\be{eq:2-prop}
\sup_{v\in V}|\{|v|\geq R\}|<\frac{\delta_1}{2}.
\ee
Without loss of generality, up to a translation and a dilation, we can assume that $\Omega\subset Q$. By Scorza-Dragoni Theorem (see {e.g.} \cite[Proposition B.1]{fonseca.kromer}) there exists a compact set $K\subset Q$  such that 
\be{eq:2bis-prop}|Q\setminus K|<3^{-N}\frac{\delta_1}{|\Omega|},\ee
and $g$ is uniformly continuous on $K\times \bar{B}_R(0)$.
Hence, by the periodicity of $g$ with respect to its first variable, there exists $\delta_2>0$ such that
\be{eq:3-prop}
|g(\eta,\xi_1)-g(\eta,\xi_2)|<\frac{\ep}{7|\Omega\times Q|}
\ee
for every $\xi_1,\xi_2\in \bar{B}_R(0)$ such that $|\xi_1-\xi_2|<\delta_2$ and $\eta\in K+\Z^N$. We observe that by Chebyshev's inequality there exists $\delta>0$ such that if 
\be{eq:delta}\|v_1-v_2\|_{L^p(\Omega\times Q;\R^d)}\leq \delta,\ee
 then
\be{eq:3bis-prop}|\{|v_1-v_2|\geq \delta_2\}|<\delta_1.\ee
Let $v_1,v_2\in V$ be such that \eqref{eq:delta} holds true. Decompose the set $\Omega\times Q$ as
\bas
\Omega\times Q=\{|v_1-v_2|<\delta_2\}\cup\{|v_1-v_2|\geq \delta_2\}.
\end{align*}
By \eqref{eq:1-prop} and \eqref{eq:3bis-prop}, we have
\bas
\int_{\{|v_1-v_2|\geq \delta_2\}}\Big|g\Big(\frac{y}{\lambda}, v_1(x,y)\Big)-g\Big(\frac{y}{\lambda}, v_2(x,y)\Big)\Big|\,dy\,dx\leq \frac{2\ep}{7}.
\end{align*}
We perform a decomposition of the set $\{|v_1-v_2|< \delta_2\}$ as
\bas
&\{|v_1-v_2|< \delta_2\}
=\{|v_1-v_2|< \delta_2,\,|v_1|<R\text{ and }|v_2|<R\}\\
&\quad\cup\{|v_1-v_2|< \delta_2,\,\max\{|v_1|,|v_2|\}\geq R\}.
\end{align*}
In view of \eqref{eq:2-prop},
$$|\{|v_1-v_2|< \delta_2,\,\max\{|v_1|,|v_2|\}\geq R\}|\leq |\{|v_1|\geq R\}|+|\{|v_2|\geq R\}|\leq \delta_1,$$
hence by \eqref{eq:1-prop}  
$$\int_{\{|v_1-v_2|< \delta_2,\,\max\{|v_1|,|v_2|\}\geq R\}}\Big|g\Big(\frac{y}{\lambda}, v_1(x,y)\Big)-g\Big(\frac{y}{\lambda}, v_2(x,y)\Big)\Big|\,dy\,dx\leq \frac{2\ep}{7}.$$
Defining $m_{\lambda}:=\floor[\big]{\frac{1}{2\lambda}}+1$, by \eqref{eq:2bis-prop} and since $\lambda<1$ there holds
$${2 m_{\lambda}\lambda}={2\lambda}\Big(1+\floor[\Big]{\frac{1}{2\lambda}}\Big)\leq 2\lambda+1\leq 3,$$
and
\be{eq:4-prop}|Q\setminus \lambda(K+\Z^N)|\leq \lambda^N|[-m_{\lambda},m_{\lambda}]^N\setminus (K+\Z^N)|\leq \lambda^N (2m_{\lambda})^N|Q\setminus K|\leq \frac{\delta_1}{|\Omega|}.\ee
Setting $E_R:=\{|v_1-v_2|< \delta_2,\,|v_1|<R\text{ and }|v_2|<R\}$, a further decomposition yields
\bas
E_R=\{E_R\cap[\Omega\times(Q\setminus \lambda(K+\Z^N))]\}\cup\{E_R\cap[\Omega\times (Q\cap\lambda(K+\Z^N))]\}.
\end{align*}
By \eqref{eq:1-prop} and \eqref{eq:4-prop} we obtain the estimate
\be{eq:5-prop}\int_{E_R\cap[\Omega\times(Q\setminus \lambda(K+\Z^N))]}\Big|g\Big(\frac{y}{\lambda},v_1(x,y)\Big)-g\Big(\frac{y}{\lambda},v_2(x,y)\Big)\Big|\,dy\,dx\leq \frac{2\ep}{7}\ee
and by \eqref{eq:3-prop},
\be{eq:6-prop}
\int_{E_R\cap[\Omega\times (Q\cap\lambda(K+\Z^N))]}\Big|g\Big(\frac{y}{\lambda},v_1(x,y)\Big)-g\Big(\frac{y}{\lambda},v_2(x,y)\Big)\Big|\,dy\,dx|\leq \frac{\ep}{7}.
\ee
The thesis follows combining \eqref{eq:delta}--\eqref{eq:6-prop}.
\end{proof}}

We now start the proof of Theorem \ref{thm:main-homogenized-pde}. First we prove the liminf inequality. The argument relies on the use of the \emph{unfolding operator} (see Subsection \ref{subsection:unfolding} and \cite[Appendix A]{fonseca.kromer} and the references therein). 
\begin{proposition}
\label{thm:liminf}
Under the assumptions of Theorem \ref{thm:main-homogenized-pde} for every $u\in L^p(\Omega;\rd)$ there holds
\begin{multline}
\label{eq:liminf-to-prove}
\inf\Big\{\liminf_{\e\to 0} \int_{\Omega}f(u_{\e}(x))\,dx:\,u_{\e}\wk u\text{ weakly in }L^p(\Omega;\rd)\\
\text{and }\pdeep{\e}u_{\e}\to 0\text{ strongly in  }W^{-1,p}(\Omega;\rl)\Big\}\geq \F(u).
\end{multline}
\end{proposition}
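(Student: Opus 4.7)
The plan is to take an admissible sequence $\{u_\e\}$ realizing the infimum on the left-hand side and, after truncation and unfolding, manufacture from it a competitor $(u_n,w_n)$ for $\F(u)$ whose energy does not exceed $\liminf_\e \int_\Omega f(u_\e)\,dx$. First I pass to a subsequence along which $\liminf = \lim = L < +\infty$ (otherwise there is nothing to prove). Since $u_\e\wk u$ in $L^p$ the sequence is bounded, so Proposition \ref{prop:2-scale-compactness} combined with Proposition \ref{lemma:limit-maps} produces a two-scale limit $u+w$ with $w\in\C_u$. Lemma \ref{lemma:truncation} then supplies a $p$-equiintegrable sequence $\tilde u_\e$ with $\pdeep{\e}\tilde u_\e\to 0$ in $W^{-1,q}$ for every $1\le q<p$ and the same two-scale limit (the $L^q$-isometry $T_\e$ transfers two-scale convergence); by Proposition \ref{prop:f-k} replacing $u_\e$ by $\tilde u_\e$ does not increase the liminf.

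The key step is unfolding: set $v_\e:=T_\e\tilde u_\e$. Then $v_\e\wk u+w$ in $L^p(\R^N\times Q;\R^d)$ by Theorem \ref{thm:equivalent-two-scale}, $\{v_\e\}$ is $p$-equiintegrable, and a change of variables on the $\e$-cells inside $\Omega$ (with boundary cells controlled by $p$-equiintegrability) gives $\int_\Omega f(\tilde u_\e)\,dx=\int_\Omega\int_Q f(v_\e)\,dy\,dx+o(1)$. Writing out $T_\e u(x,y)=u(\e\lfloor x/\e\rfloor+\e(y-\lfloor y\rfloor))$ and using $Q$-periodicity of the $A^i$, one establishes the pointwise distributional identity
\begin{equation*}
\sum_{i=1}^N\partial_{y_i}\bigl(A^i(y)\,v_\e(x,y)\bigr)=\e\,T_\e(\pdeep{\e}\tilde u_\e)(x,y),
\end{equation*}
so the left-hand side tends to $0$ in $L^q(\Omega;W^{-1,q}(Q;\R^l))$. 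A parallel computation, exploiting that $\int_Q A^i(y)v_\e(x,y)\,dy$ coincides with the $\e$-cell average of $A^i(\cdot/\e)\tilde u_\e$ around $x$, yields $\sum_i\partial_{x_i}\!\int_Q A^i(y)v_\e\,dy\to 0$ in $W^{-1,q}(\Omega;\R^l)$.

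I then apply the $L^q$-analogue of Lemma \ref{lemma:projection} (the proof carries over verbatim once Lemma \ref{lemma:div}(P3) is read with exponent $q$) to obtain a subsequence $\{v'_k\}$ with $v'_k-v_{\e_k}\to 0$ in $L^q(\Omega\times Q;\R^d)$, $v'_k$ still $p$-equiintegrable (by Remark \ref{remark:if-regular}), and satisfying the two limit PDEs with $A^i(y)$ exactly. Setting $u'_k(x):=\int_Q v'_k(x,y)\,dy$ and $w'_k:=v'_k-u'_k$, we have $w'_k\in\C_{u'_k}$ and $u'_k\wk u$ in $L^p$. By Remark \ref{rk:important}, $w'_k(x,n\,\cdot)\in\mathcal{C}^{\pdeor(n\cdot)}_{u'_k}$ for every $n$ with the same $L^p$-norm, and by $Q$-periodicity $\int_\Omega\int_Q f(u'_k+w'_k(x,ny))\,dy\,dx=\int_\Omega\int_Q f(v'_k)\,dy\,dx$. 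Fix $r>\sup_k\|w'_k\|_{L^p}$ and a diagonal $k(n)\to\infty$, and define $u_n:=u'_{k(n)}$, $w_n(x,y):=w'_{k(n)}(x,ny)$; then $u_n\in\mathcal{C}^{\pdeor(n\cdot)}_r$, $u_n\wk u$ in $L^p$, and the $L^q$-proximity of $v'_k$ to $v_{\e_k}$ combined with $p$-equiintegrability yields, via Vitali's convergence theorem,
\begin{equation*}
\FBN^{\,r}(u_n)\le\int_\Omega\int_Q f(u_n+w_n)\,dy\,dx=\int_\Omega\int_Q f(v'_{k(n)})\,dy\,dx\longrightarrow L,
\end{equation*}
whence $\F(u)\le\liminf_n\FBN^{\,r}(u_n)\le L$, which is \eqref{eq:liminf-to-prove}.

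The hard part is the pair of asymptotic PDE identities for the unfolded sequence $v_\e$, together with the resulting mismatch in exponents: the truncation step only delivers $W^{-1,q}$-smallness of the constraint, whereas Lemma \ref{lemma:projection} is stated in $W^{-1,p}$. Adapting the projection lemma to $L^q$ while retaining $p$-equiintegrability of the projected sequence is precisely what enables the final Vitali-type passage to the limit and closes the gap between the exact constraint required by $\FBN^{\,r}$ and the asymptotic one available from $\pdeep{\e}u_\e\to 0$.
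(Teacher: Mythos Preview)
Your approach has a genuine gap at the unfolding step, and it is precisely the step you flag as ``the hard part'': the asymptotic PDE identities for $v_\e:=T_\e\tilde u_\e$ do \emph{not} follow from $\pdeep{\e}\tilde u_\e\to 0$ in $W^{-1,q}(\Omega;\R^l)$.

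Consider the $y$-constraint. Your formal identity $\sum_i\partial_{y_i}(A^i(y)v_\e(x,y))=\e\,T_\e(\pdeep{\e}\tilde u_\e)$ is problematic because $T_\e$ is only defined on functions, not on $W^{-1,q}$-distributions. Interpreting it correctly via rescaling, one finds that for $x$ in the cube $\e z+\e Q$ the $W^{-1,q}(Q)$-norm of the left-hand side scales like $\e^{-N/q}\|\pdeep{\e}\tilde u_\e\|_{W^{-1,q}(\e z+\e Q)}$, so
\[
\int_\Omega\Big\|\sum_i\partial_{y_i}(A^i(y)v_\e(x,y))\Big\|^q_{W^{-1,q}(Q)}dx\;\lesssim\;\sum_{z}\|\pdeep{\e}\tilde u_\e\|^q_{W^{-1,q}(\e z+\e Q)}.
\]
There is no inequality of the form $\sum_z\|T\|^q_{W^{-1,q}(\e z+\e Q)}\le C\|T\|^q_{W^{-1,q}(\Omega)}$: negative Sobolev norms do not localize subadditively over fine partitions. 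So global $W^{-1,q}$-smallness of $\pdeep{\e}\tilde u_\e$ gives you nothing cellwise. The $x$-constraint fails for a related reason: $\int_Q A^i(y)v_\e(x,y)\,dy$ is the piecewise-constant $\e$-cell average $P_\e[A^i(\cdot/\e)\tilde u_\e]$, and for oscillating integrands $P_\e g_\e-g_\e\not\to 0$ in $L^q$ (take $g_\e(x)=\sin(2\pi x/\e)$), so you cannot pass from $\mathrm{div}\,g_\e\to 0$ to $\mathrm{div}\,P_\e g_\e\to 0$.

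This is exactly why the paper's proof takes a different route: it first rewrites the constraint as a pure divergence by setting $v_{\e_n}:=\mathcal A(x/\e_n)u_{\e_n}$ (Step 1), then applies the \emph{global} projection of Lemma \ref{lemma:cor-f-m} to obtain fields that are \emph{exactly} divergence-free (Step 2), and only afterwards performs a rescaling (Step 3) and an unfolding at a fixed scale $1/\nu$ decoupled from $\e_n$ (Step 4). Because the fields are exactly divergence-free \emph{before} unfolding, the unfolded fields are exactly $y$-divergence-free and no cellwise limit is needed. Your order of operations---unfold first, project later---cannot be repaired by sharpening Lemma \ref{lemma:projection} to exponent $q$; the input hypotheses \eqref{eq:pde-1}--\eqref{eq:pde-2} of that lemma are simply not available for $T_\e\tilde u_\e$.
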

\begin{proof}
Let $\C$ be the class introduced in \eqref{eq:limit-domain}.
We first notice that, by Proposition \ref{lemma:limit-maps}, if $u\in L^p(\Omega;\rd)\setminus \C$ then 
\begin{multline*}
\Big\{\{u_{\e}\}\subset L^p(\Omega;\rd):\,u_{\e}\wk u\text{ weakly in }L^p(\Omega;\rd)\\
\text{and }\pdeep{\e}u_{\e}\to 0\text{ strongly in  }W^{-1,p}(\Omega;\rl)\}\Big\}=\emptyset,
\end{multline*}
hence \eqref{eq:liminf-to-prove} follows trivially.

Define $g:\R^N\times \R^d\to [0,+\infty)$ as $$g(y,\xi)=f({\mathcal{A}}(y)^{-1}\xi)\quad\text{for every }y\in\R^N,\text{ and }\xi\in\R^d.$$
By the continuity of $f$, $g$ is measurable with respect to the first variable (it is the composition of a continuous function with a measurable one), and continuous with respect to the second variable. By \eqref{eq:growth-p-pde}, there holds 
\be{eq:prop-g}0\leq g(y,\xi)\leq C(1+|{\mathcal{A}}(y)^{-1}\xi|^p)\leq C(1+|\xi|^p),\ee
where the last inequality follows by the uniform invertibility assumption \eqref{eq:A-invertibility}. We divide the proof of the proposition into five steps.\\
\emph{Step 1:} We first show that for every $u\in\C$ there holds
\ba{eq:step1}
\inf&\Big\{\liminf_{\ep\to 0} \int_{\Omega}f(u_{\e}(x))\,dx:\, u_{\e}\wk u\quad\text{weakly in }L^p(\Omega;\R^d)\\
\nn&\qquad\text{and }\pdeep{\e}u_{\e}\to 0\quad\text{strongly in }W^{-1,p}(\Omega;\R^l)\Big\}\\
\nn&\geq\quad \inf_{w\in\C_u}\inf \Bigg\{\liminfn \int_{\Omega} g\Big(\frac{x}{\en},v_{\en}(x)\Big)\,dx:\\
\nn&\qquad v_{\en}\wk \iQ {\mathcal{A}}(y)(u(x)+w(x,y))\,dy\quad\text{weakly in }L^p(\Omega;\R^d),\\
\nn&\qquad {\rm div}\, v_{\en}\to 0\quad\text{strongly in }W^{-1,p}(\Omega;\R^l),\\
\nn&\qquad\text{and }\aaa\Big(\frac{x}{\en}\Big)^{-1}v_{\en}\wk u\quad\text{weakly in }L^p(\Omega;\R^d)\Bigg\}.
\end{align}

Indeed let $u\in\C$ and let $\{u_{\ep}\}$ be as in \eqref{eq:step1}. Up to the extraction of a subsequence $\{\en\}$, 
$$\liminf_{\ep\to 0} \int_{\Omega}f(u_{\e}(x))\,dx=\lim_{n\to +\infty}\int_{\Omega}f(u_{\en}(x))\,dx,$$
and by Proposition \ref{prop:2-scale-compactness} there exists $w\in\C_u$ such that
\be{eq:step1-uen-2-scale}u_{\en}\wkts u+w\quad\text{weakly two-scale in }L^p(\Omega\times Q;\R^d).\ee
By the definition of $g$ it is straightforward to see that
$$\lim_{n\to +\infty}\int_{\Omega}f(u_{\en}(x))\,dx=\lim_{n\to +\infty}\int_{\Omega}g\Big(\frac{x}{\en},\aaa\Big(\frac{x}{\en}\Big)u_{\en}(x)\Big)\,dx.$$
Setting $v_{\en}:=\aaa\big(\frac{x}{\en}\big)u_{\en}$, in view of the assumptions on $\{u_{\ep}\}$ in \eqref{eq:step1}, it follows that
$${\rm div}\, v_{\en}=\sum_{i=1}^N\frac{\partial}{\partial x_i}\Big(A^i\Big(\frac{x}{\en}\Big)u_{\en}(x)\Big)\to 0\quad\text{strongly in }W^{-1,p}(\Omega;\rl),$$
and
\be{eq:step1-inv-op}\aaa\Big(\frac{x}{\en}\Big)^{-1}v_{\en}\wk u\quad\text{weakly in }L^p(\Omega;\R^d).\ee
Finally, by \eqref{eq:step1-uen-2-scale} for every $\varphi\in L^{{p}'}(\Omega;\R^d)$ there holds
\bas
&\lim_{n\to +\infty}\int_{\Omega}v_{\en}(x)\cdot\varphi(x)\,dx
=\lim_{n\to +\infty}\int_{\Omega}u_{\en}(x)\cdot\aaa\Big(\frac{x}{\en}\Big)^T\varphi(x)\,dx\\
&\quad=\iOQ (u(x)+w(x,y))\cdot\aaa(y)^T\varphi(x)\,dy\,dx.
\end{align*}
This completes the proof of \eqref{eq:step1}.\\
\emph{Step 2:} We claim that
\ba{eq:step2}
&\inf_{w\in\C_u}\inf \Bigg\{\liminfn \int_{\Omega} g\Big(\frac{x}{\en},v_{\en}(x)\Big)\,dx:\\
\nn&\qquad v_{\en}\wk \iQ {\mathcal{A}}(y)(u(x)+w(x,y))\,dy\quad\text{weakly in }L^p(\Omega;\R^d),\\
\nn&\qquad {\rm div}\, v_{\en}\to 0\quad\text{strongly in }W^{-1,p}(\Omega;\R^l),\\
\nn&\qquad\text{and }\aaa\Big(\frac{x}{\en}\Big)^{-1}v_{\en}\wk u\quad\text{weakly in }L^p(\Omega;\R^d)\Bigg\}\\
\nn&\geq\quad \inf_{w\in\C_u}\inf \Bigg\{\liminfn \int_{\Omega} g\Big(\frac{x}{\en},\iQ {\mathcal{A}}(y)(u(x)+w(x,y))\,dy+\tilde{v}_{\en}(x)\Big)\,dx:\\
\nn&\qquad \{\tilde{v}_{\en}\}\text{ is p-equiintegrable},\\
\nn&\qquad \tilde{v}_{\en}\wk 0\quad\text{weakly in }L^p(\Omega;\R^d),\\
\nn&\qquad{\rm div}\, \tilde{v}_{\en}= 0\quad\text{in }W^{-1,p}(\Omega;\R^l),\, \text{and} \\
\nn&\qquad \aaa\Big(\frac{x}{\en}\Big)^{-1}\tilde{v}_{\en}\wk u(x)-\Big(\iQ\aaa(z)^{-1}\,dz\Big)\iQ\aaa(y)(u(x)+w(x,y))\,dy\\
\nn&\qquad\text{weakly in }L^p(\Omega;\R^d)\Bigg\}.
\end{align}
Let $\{v_{\en}\}$ be as in \eqref{eq:step2}. By Lemma \ref{lemma:cor-f-m}, we construct a p-equiintegrable sequence $\{\bar{v}_{\en}\}$ such that
\ba{eq:step2-properties} 
&{\rm div}\,\bar{v}_{\en}=0\quad\text{in }W^{-1,p}(\Omega;\R^l),\\
\label{eq:step2-p2}&v_{\en}-\bar{v}_{\en}\to 0\quad\text{strongly in }L^q(\Omega;\R^d)\quad\text{for every }1\leq q<p,\\
\label{eq:step2-p3}&\bar{v}_{\en}\wk\iQ {\mathcal{A}}(y)(u(x)+w(x,y))\,dy\quad\text{weakly in }L^p(\Omega;\R^d).
\end{align}
Moreover, by Proposition \ref{prop:f-k},
$$\lim_{n\to +\infty}\int_{\Omega}g\Big(\frac{x}{\en},v_{\en}(x)\Big)\,dx\geq\liminfn\int_{\Omega}g\Big(\frac{x}{\en},\bar{v}_{\en}(x)\Big)\,dx.$$
By \eqref{eq:step2-p3} and by the uniform invertibility assumption \eqref{eq:A-invertibility}, there exists a constant $C$ such that 
$$\Big\|\aaa\Big(\frac{x}{\en}\Big)^{-1}\bar{v}_{\en}\Big\|_{L^p(\Omega;\R^d)}\leq C\quad\text{for every }n.$$
Therefore, there exists a map $\phi\in L^p(\Omega;\R^d)$ such that, up to the extraction of a (not relabeled) subsequence,
\be{eq:step2-inv-op}
\aaa\Big(\frac{x}{\en}\Big)^{-1}\bar{v}_{\en}\wk\phi\quad\text{weakly in }L^p(\Omega;\R^d).\ee 
By the properties of $\{v_{\en}\}$ in \eqref{eq:step2} and \eqref{eq:step2-p2}, the convergence in \eqref{eq:step2-inv-op} holds for the entire sequence $\{\bar{v}_{\en}\}$ and
$$\phi=u.$$ 

Claim \eqref{eq:step2} follows by setting 
$$\tilde{v}_{\en}(x):=\bar{v}_{\en}-\iQ\aaa(y)(u(x)+w(x,y))\,dy\quad\text{for a.e. }x\in\Omega.$$
Indeed, by \eqref{eq:step2-p3} we have
$$\tilde{v}_{\en}\wk 0\quad\text{weakly in }L^p(\Omega;\R^d),$$
and since $w\in\C_u$, by \eqref{eq:step2-properties}
$${\rm div}\,\tilde{v}_{\en}=0\quad\text{in }W^{-1,p}(\Omega;\R^l).$$
Finally, 
$$\aaa\Big(\frac{x}{\en}\Big)^{-1}\tilde{v}_{\en}(x)=\aaa\Big(\frac{x}{\en}\Big)^{-1}\bar{v}_{\en}(x)-\aaa\Big(\frac{x}{\en}\Big)^{-1}\iQ\aaa(y)(u(x)+w(x,y))\,dy$$
for a.e. $x\in\Omega$, therefore by \eqref{eq:step2-inv-op} and Riemann-Lebesgue lemma (see {e.g.} \cite{fonseca.leoni}),
$$\aaa\Big(\frac{x}{\en}\Big)^{-1}\tilde{v}_{\en}\wk u(x)-\Big(\iQ\aaa(z)^{-1}\,dz\Big)\iQ\aaa(y)(u(x)+w(x,y))\,dy$$
weakly in $L^p(\Omega;\R^d).$\\
\emph{Step 3:} We show that for every $w\in\C_u$ and every p-equiintegrable sequence $\{\tilde{v}_{\en}\}$ satisfying
\ba{eq:tvn-1}
&\qquad \tilde{v}_{\en}\wk 0\quad\text{weakly in }L^p(\Omega;\R^d),\\
\label{eq:tvn-2}&\qquad{\rm div}\, \tilde{v}_{\en}= 0\quad\text{in }W^{-1,p}(\Omega;\R^l), \\
\label{eq:tvn-3}&\qquad \aaa\Big(\frac{x}{\en}\Big)^{-1}\tilde{v}_{\en}\wk u(x)-\Big(\iQ\aaa(z)^{-1}\,dz\Big)\iQ\aaa(y)(u(x)+w(x,y))\,dy
\end{align}
weakly in $L^p(\Omega;\R^d)$, there exists a p-equiintegrable family $\{v_{\nu,n}:\,\nu\in\N,\,n\in\N\}$ such that
\ba{eq:step3-p1}&{\rm div}\,v_{\nu,n}= 0\quad\text{ in }W^{-1,p}(\Omega;\R^l),\\
\label{eq:step3-p2}&v_{\nu,n}\wk 0\quad\text{weakly in }L^p(\Omega;\R^d)\quad\text{as }n\to +\infty,\\
\label{eq:step3-p3}&\aaa\Big(\nu\floor[\Big]{\frac{1}{\nu\en}}x\Big)^{-1}v_{\nu,n}(x)\wk u-\Big(\iQ\aaa(z)^{-1}\,dz\Big)\iQ\aaa(y)(u(x)+w(x,y))\,dy
\end{align}
weakly in $L^p(\Omega;\R^d)$, as $n\to +\infty$ and
\ba{eq:step3-energy}
&\liminfn \int_{\Omega} g\Big(\frac{x}{\en},\iQ {\mathcal{A}}(y)(u(x)+w(x,y))\,dy+\tilde{v}_{\en}(x)\Big)\,dx\\
\nonumber &\quad\geq \sup_{\nu\in\N}\liminfn \int_{\Omega} g\Big(\nu \floor[\Big]{\frac{1}{\nu\en}}x,\iQ {\mathcal{A}}(y)(u(x)+w(x,y))\,dy+v_{\nu,n}(x)\Big)\,dx.
\end{align}
To prove the claim we argue as in \cite[Proposition 3.8]{fonseca.kromer}. Fix $\nu\in\N$, let
$$\theta_{\nu,n}:=\nu\en\floor[\Big]{\frac{1}{\nu\en}}\in[0,1]$$
and set $$k_{\nu,n}:=\frac{\theta_{\nu,n}}{\nu\en}\in\N_0.$$
We notice that 
\be{eq:theta-nu-n}\theta_{\nu,n}\to 1\quad \text{ as }n\to +\infty.\ee Without loss of generality, we can assume that $\Omega\subset\subset Q$. By Lemma \ref{lemma:A-free-ext} we extend every map $\tilde{v}_{\en}$ to a map $\tilde{\tilde{v}}_{\en}\in L^p(Q;\R^d)$ such that $\{\tilde{\tilde{v}}_{\en}\}$ is p-equiintegrable, and satisfies the following properties
\ba{eq:step3-properties}
&\tilde{\tilde{v}}_{\en}-\tilde{v}_{\en}\to 0\quad\text{strongly in }L^p(\Omega;\R^d),\\
\nonumber&\tilde{\tilde{v}}_{\en}\to 0\quad\text{strongly in }L^p(Q\setminus\Omega;\R^d),\\
\nonumber&{\rm div}\,\tilde{\tilde{v}}_{\en}=0\quad\text{in }W^{-1,p}(Q;\R^l).
\end{align}
In particular, by \eqref{eq:tvn-3} and \eqref{eq:step3-properties} it follows that 
\be{eq:step3-rlim}\aaa\Big(\frac{x}{\en}\Big)^{-1}\tilde{\tilde{v}}_{\en}(x)\wk u(x)-\Big(\iQ\aaa(z)^{-1}\,dz\Big)\iQ\aaa(y)(u(x)+w(x,y))\,dy\ee
weakly in $L^p(\Omega;\R^d)$.
By Proposition \ref{prop:f-k} and the definition of $\theta_{\nu,n}$ and $k_{\nu,n}$,
\ba{eq:step3-in1}
&\liminfn \int_{\Omega} g\Big(\frac{x}{\en},\iQ {\mathcal{A}}(y)(u(x)+w(x,y))\,dy+\tilde{v}_{\en}(x)\Big)\,dx\\
\nonumber &\quad\geq\liminfn \int_{\Omega} g\Big(\nu \frac{k_{\nu,n}}{\theta_{\nu,n}}x,\iQ {\mathcal{A}}(y)(u(x)+w(x,y))\,dy+\tilde{\tilde{v}}_{\en}(x)\Big)\,dx.
\end{align}
For $\Omega'\subset\subset\Omega$ fixed, there holds $\theta_{\nu,n}\Omega'\subset\Omega$ for $n$ large enough. Since $g$ is nonnegative (see \eqref{eq:prop-g}), by \eqref{eq:step3-in1} we have
\ba{eq:step3-in2}
&\liminfn \int_{\Omega} g\Big(\frac{x}{\en},\iQ {\mathcal{A}}(y)(u(x)+w(x,y))\,dy+\tilde{v}_{\en}(x)\Big)\,dx\\
\nonumber &\quad\geq\liminfn \int_{\theta_{\nu,n}\Omega'} g\Big(\nu \frac{k_{\nu,n}}{\theta_{\nu,n}}x,\iQ {\mathcal{A}}(y)(u(x)+w(x,y))\,dy+\tilde{\tilde{v}}_{\en}(x)\Big)\,dx\\
\nonumber &\quad=\liminfn(\theta_{\nu,n})^N \int_{\Omega'} g\Big(\nu k_{\nu,n}x,\iQ {\mathcal{A}}(y)(u(\theta_{\nu,n}x)+w(\theta_{\nu,n}x,y))\,dy+\tilde{\tilde{v}}_{\en}(\theta_{\nu,n}x)\Big)\,dx\\
\nonumber&\quad\geq \liminfn \int_{\Omega'} g\Big(\nu k_{\nu,n}x,\iQ {\mathcal{A}}(y)(u(x)+w(x,y))\,dy+\tilde{\tilde{v}}_{\en}(\theta_{\nu,n}x)\Big)\,dx
\end{align}
where the last inequality follows by \eqref{eq:theta-nu-n}, and since 
$$u(\theta_{\nu,n}x)+w(\theta_{\nu,n}x,y)-u(x)-w(x,y)\to 0\quad\text{strongly in }L^p(\Omega\times Q;\R^d).$$ Letting $\Omega'$ tend to $\Omega$, by the p-equiintegrability of $\{\tilde{\tilde{v}}_{\en}\}$, there holds
\ba{eq:step3-proof-energy}
&\liminfn \int_{\Omega} g\Big(\frac{x}{\en},\iQ {\mathcal{A}}(y)(u(x)+w(x,y))\,dy+\tilde{v}_{\en}(x)\Big)\,dx\\
\nonumber&\quad\geq \liminfn \int_{\Omega} g\Big(\nu k_{\nu,n}x,\iQ {\mathcal{A}}(y)(u(x)+w(x,y))\,dy+\tilde{\tilde{v}}_{\en}(\theta_{\nu,n}x)\Big)\,dx.
\end{align}
Set 
$$v_{\nu,n}(x):=\tilde{\tilde{v}}_{\en}(\theta_{\nu,n}x)\quad\text{for a.e. }x\in\Omega,\,\nu\in\N\text{ and }n\geq n_0(\nu),$$
where $n_0(\nu)$ is big enough so that $\theta_{\nu,n}\Omega\subset Q$ for $n\geq n_0(\nu)$. Inequality \eqref{eq:step3-energy} is a direct consequence of \eqref{eq:step3-proof-energy}. The p-equiintegrability of $\{v_{\nu,n}\}$, \eqref{eq:step3-p1}, and \eqref{eq:step3-p2} follow in view of \eqref{eq:theta-nu-n} and \eqref{eq:step3-properties}. 

To conclude the proof of the claim it remains to establish \eqref{eq:step3-p3}. We first remark that, by \eqref{eq:A-invertibility} and \eqref{eq:step3-p2}, the sequence $\big\{\aaa\big(\nu\floor[\big]{\frac{1}{\nu\en}}x\big)^{-1}v_{\nu,n}(x)\big\}$ is uniformly bounded in $L^p(\Omega;\R^d)$. Therefore, there exists  a map $L\in L^p(\Omega;\R^d)$ such that, up to the extraction of a (not relabeled) subsequence, there holds
 \be{eq:step3-could-be-sub}\aaa\Big(\nu\floor[\big]{\frac{1}{\nu\en}}x\Big)^{-1}v_{\nu,n}(x)\wk L(x)\quad\text{weakly in }L^p(\Omega;\R^d).\ee
 Let $\varphi\in C^{\infty}_c(\Omega;\R^d)$. Then,
 \be{eq:step3-conseq}\int_{\Omega}\aaa\Big(\nu\floor[\big]{\frac{1}{\nu\en}}x\Big)^{-1}v_{\nu,n}(x)\cdot\varphi(x)\,dx\to \int_{\Omega}L(x)\cdot\varphi(x)\,dx.\ee
For $n$ big enough, $\theta_{\nu,n}\,{\rm supp}\,\varphi\subset\Omega$. Hence,
 \ba{eq:step3-long-estimate}
 &\int_{\Omega}\aaa\Big(\nu\floor[\Big]{\frac{1}{\nu\en}}x\Big)^{-1}v_{\nu,n}(x)\cdot\varphi(x)\,dx\\
 \nn&\quad=\frac{1}{(\theta_{\nu,n})^N} \int_{\theta_{\nu,n}\,{\rm supp }\,\varphi}\aaa\Big(\frac{y}{\en}\Big)^{-1}\tilde{\tilde{v}}_{\en}(y)\cdot\varphi\Big(\frac{y}{\theta_{\nu,n}}\Big)\,dy\\
 \nn&\quad=\frac{1}{(\theta_{\nu,n})^N} \int_{\Omega}\aaa\Big(\frac{y}{\en}\Big)^{-1}\tilde{\tilde{v}}_{\en}(y)\cdot\Big(\varphi\Big(\frac{y}{\theta_{\nu,n}}\Big)-\varphi(y)\Big)\,dy\\
 \nn&\qquad+\frac{1}{(\theta_{\nu,n})^N} \int_{\Omega}\aaa\Big(\frac{y}{\en}\Big)^{-1}\tilde{\tilde{v}}_{\en}(y)\cdot\varphi(y)\,dy.
 \end{align}
 By \eqref{eq:A-invertibility} the first term in the right-hand side of \eqref{eq:step3-long-estimate} is bounded by
 \bas
 &\Big|\frac{1}{(\theta_{\nu,n})^N} \int_{\Omega}\aaa\Big(\frac{y}{\en}\Big)^{-1}\tilde{\tilde{v}}_{\en}(y)\cdot\Big(\varphi\Big(\frac{y}{\theta_{\nu,n}}\Big)-\varphi(y)\Big)\,dy\Big|\\
 &\quad\leq C\|\tilde{\tilde{v}}_{\en}\|_{L^p(\Omega;\R^d)}\|\nabla\varphi\|_{L^{\infty}(\Omega;\M^{d\times N})}\sup_{y\in\Omega}\Big|\frac{y}{\theta_{\nu,n}}-y\Big|,
 \end{align*}
 which is infinitesimal as $n\to +\infty$ due to \eqref{eq:theta-nu-n}. By \eqref{eq:theta-nu-n} and \eqref{eq:step3-rlim}, the second term in the right-hand side of \eqref{eq:step3-long-estimate} satisfies
 \bas
 &\lim_{n\to +\infty}\frac{1}{(\theta_{\nu,n})^N} \int_{\Omega}\aaa\Big(\frac{y}{\en}\Big)^{-1}\tilde{\tilde{v}}_{\en}(y)\cdot\varphi(y)\,dy\\
 &\quad=\int_{\Omega}\Big[u(x)-\iQ\aaa(z)^{-1}\,dz\iQ\aaa(y)(u(x)+w(x,y))\,dy\Big]\cdot\varphi(x)\,dx.
 \end{align*}
 Arguing by density, we conclude that 
 $$L(x)=u(x)-\Big(\iQ\aaa(z)^{-1}\,dz\Big)\iQ\aaa(y)(u(x)+w(x,y))\,dy\quad\text{for a.e. }x\in\Omega$$
 and \eqref{eq:step3-could-be-sub} holds for the entire sequence. This completes the proof of \eqref{eq:step3-p3}.\\
\emph{Step 4:} We claim that for every $w\in\C_u$, and every $p$-equiintegrable family $\{v_{\nu,n}:\,\nu\in\N,\,n\in\N\}$ satisfying \eqref{eq:step3-p1}--\eqref{eq:step3-p3}, there exists a $p$-equiintegrable family $\{w_{\nu,n}:\,\nu\in\N,\,n\in\N\}$ such that
\ba{eq:step4-p1}
&\quad {\rm div}_y\,w_{\nu,n}(x,y):=\sum_{i=1}^N\frac{\partial w_{\nu,n}}{\partial y_i}(x,y)=0\quad\text{in }W^{-1,p}(Q;\R^l)\text{ for a.e. }x\in\Omega,\\
\label{eq:step4-p2}&\quad w_{\nu,n}\wk 0\quad\text{weakly in }L^p(\Omega\times Q;\R^d),\quad\text{as }n\to +\infty,\\
\label{eq:step4-p5}&\quad\iQ w_{\nu,n}(x,y)\,dy=0\quad\text{for a.e. }x\in\Omega,\\
\label{eq:step4-p3}&\quad\aaa\Big(\floor[\Big]{\frac{1}{\nu\en}}y\Big)^{-1}w_{\nu,n}(x,y)\wk u(x)-\Big(\iQ\aaa(z)^{-1}\,dz\Big)\iQ\aaa(y)(u(x)+w(x,y))\,dy\\
\nn&\qquad\text{weakly in }L^p(\Omega;\R^d)\quad\text{as }n\to +\infty\text{ and }\nu\to +\infty,\text{ in this order},
\end{align}
and
\ba{eq:step4-p6}
&\liminfn \int_{\Omega} g\Big(\nu \floor[\Big]{\frac{1}{\nu\en}}x,\iQ {\mathcal{A}}(y)(u(x)+w(x,y))\,dy+v_{\nu,n}(x)\Big)\,dx\\
\nn&\quad\geq\liminfn\iOQ g\Big(\floor[\Big]{\frac{1}{\nu\en}}y,\iQ\aaa(z)(u(x)+w(x,z))\,dz+w_{\nu,n}(x,y)\Big)\,dy\,dx+\sigma_{\nu},
\end{align}
where $\sigma_{\nu}\to 0$ as $\nu\to +\infty$.
Let $\{v_{\nu,n}\}$ be as above. We argue similarly to \cite[Proof of Proposition 3.9]{fonseca.kromer}. We extend $u,w$, and $\{v_{\nu,n}\}$ to $0$ outside $\Omega$, and define
\bas
&Q_{\nu,z}:=\frac{1}{\nu}z+\frac{1}{\nu}Q,\quad z\in\Z^N,\\
&\Z_{\nu}:=\{z\in\Z^N:\,Q_{\nu,z}\cap\Omega\neq\emptyset\},\\
&I_{\nu,n}:=\int_{\Omega}g\Big(\nu \floor[\Big]{\frac{1}{\nu\en}} x,\iQ\aaa(y)(u(x)+w(x,y))\,dy+v_{\nu,n}(x)\Big)\,dx.
\end{align*}
By a change of variables, since $g(\cdot,0)=0$ by \eqref{eq:prop-g}, and by the periodicity of $g$ in its first variable, we obtain the following chain of equalities 
\bas
&I_{\nu,n}=\sum_{z\in\Z_{\nu}}\frac{1}{\nu^N}\iQ g\Big(\nu\floor[\Big]{\frac{1}{\nu\en}}\Big(\frac{z}{\nu}+\frac{\eta}{\nu}\Big),\iQ\aaa(y)\Big(u\Big(\frac{z}{\nu}+\frac{\eta}{\nu}\Big)+w\Big(\frac{z}{\nu}+\frac{\eta}{\nu},y\Big)\Big)\,dy\\
&\qquad+v_{\nu,n}\Big(\frac{z}{\nu}+\frac{\eta}{\nu}\Big)\Big)\,d\eta\\
&\quad=\sum_{z\in\Z_{\nu}}\int_{Q_{\nu,z}}\iQ g\Big(\floor[\Big]{\frac{1}{\nu\en}}\eta,\iQ\aaa(y)\Big(u\Big(\frac{\floor{\nu x}}{\nu}+\frac{\eta}{\nu}\Big)+w\Big(\frac{\floor{\nu x}}{\nu}+\frac{\eta}{\nu},y\Big)\Big)\,dy\\
&\qquad+v_{\nu,n}\Big(\frac{\floor{\nu x}}{\nu}+\frac{\eta}{\nu}\Big)\Big)\,d\eta\,dx\\
&\quad=\sum_{z\in\Z_{\nu}}\int_{Q_{\nu,z}}\iQ g\Big(\floor[\Big]{\frac{1}{\nu\en}}\eta,\iQ\aaa(y)(u(x)+w(x,y))\,dy+T_{\frac{1}{\nu}}v_{\nu,n}(x,\eta)\Big)\,d\eta\,dx+\sigma_{\nu}
\end{align*}
where $T_{\frac{1}{\nu}}$ is the unfolding operator defined in \eqref{eq:unfolding-operator}, and 
\bas
&\sigma_{\nu}:=\sum_{z\in\Z_{\nu}}\int_{Q_{\nu,z}}\iQ \Bigg\{g\Big(\floor[\Big]{\frac{1}{\nu\en}}\eta,\iQ\aaa(y)\Big(u\Big(\frac{\floor{\nu x}}{\nu}+\frac{\eta}{\nu}\Big)+w\Big(\frac{\floor{\nu x}}{\nu}+\frac{\eta}{\nu},y\Big)\Big)\,dy\\
&\qquad+T_{\frac{1}{\nu}}v_{\nu,n}(x,\eta)\Big)-g\Big(\floor[\Big]{\frac{1}{\nu\en}}\eta,\iQ\aaa(y)(u(x)+w(x,y))\,dy+T_{\frac{1}{\nu}}v_{\nu,n}(x,\eta)\Big)\Bigg\}\,d\eta\,dx.
\end{align*}
By Proposition \ref{prop:conv-unf-op},
$$\Big\|\iQ\aaa(y)(u(x)+w(x,y))\,dy-\iQ\aaa(y)\Big(T_{\frac{1}{\nu}}u(x,\eta)+T_{\frac{1}{\nu}}w((x,\eta),y)\Big)\,dy\Big\|_{L^p(\Omega\times Q;\R^d)}\to 0$$
as $\nu\to +\infty$. Moreover, by Proposition \ref{prop:conv-unf-op}, the sequence
$$\Big\{\iQ\aaa(y)(u(x)+w(x,y))\,dy-\iQ\aaa(y)\Big(T_{\frac{1}{\nu}}u(x,\eta)+T_{\frac{1}{\nu}}w((x,\eta),y)\Big)\,dy\Big\}$$
is p-equiintegrable and
\bas
&\Big\|\iQ\aaa(y)(u(x)+w(x,y))\,dy\\
\quad&-\iQ\aaa(y)\Big(T_{\frac{1}{\nu}}u(x,\eta)+T_{\frac{1}{\nu}}w((x,\eta),y)\Big)\,dy\Big\|_{L^p\Big(\Big(\cup_{z\in\Z_{\nu}}Q_{\nu,z}\setminus\Omega\Big)\times Q;\R^d\Big)}\to 0
\end{align*}
as $\nu\to +\infty$.
Hence, by Proposition \ref{prop:to-add1}, $\sigma_{\nu}=\sigma_{\nu}(\Omega)\to 0$ as $\nu\to +\infty$.

 We set
$$\hat{v}_{\nu,z,n}(y):=T_{\frac{1}{\nu}}v_{\nu,n}\Big(\frac{z}{\nu},y\Big)\quad\text{for a.e. }y\in Q,\text{ for every }z\in\Z_{\nu}.$$
For fixed $\nu$ and $z\in\Z_{\nu}$, the sequence $\{\hat{v}_{\nu,z,n}\}$ is p-equiintegrable. Moreover,
$${\rm div}_y\,\hat{v}_{\nu,z,n}=0\quad\text{in }W^{-1,p}(Q;\R^l)$$
and
$$\hat{v}_{\nu,z,n}\wk 0\quad\text{weakly in }L^p(Q;\R^d),\text{ as }n\to +\infty.$$
Setting $w_{\nu,z,n}:=\hat{v}_{\nu,z,n}-\iQ\hat{v}_{\nu,z,n}(y)\,dy$, the sequence $\{w_{\nu,z,n}\}\subset L^p(Q;\R^d)$ is p-equiintegrable and such that
\bas
\nn&{\rm div}_y\,w_{\nu,z,n}=0\quad\text{in }W^{-1,p}(Q;\R^l),\\
\nn&w_{\nu,z,n}\wk 0\quad\text{weakly in }L^p(Q;\R^d),\text{ as }n\to +\infty\\
\nn&\iQ w_{\nu,z,n}(y)\,dy=0,
\end{align*}
and
\be{eq:difference-v-w}w_{\nu,z,n}-\hat{v}_{\nu,z,n}\to 0\quad\text{strongly in }L^p(Q;\R^d),\text{ as }n\to +\infty.\ee
By applying again Proposition \ref{prop:to-add1} we obtain
\bas
&\int_{Q_{\nu,z}}\iQ g\Big(\floor[\Big]{\frac{1}{\nu\en}}y,\iQ\aaa(\xi)(u(x)+w(x,\xi))\,d\xi+T_{\frac{1}{\nu}}v_{\nu,n}\Big(\frac{z}{\nu},y\Big)\Big)\,dy\,dx\\
&\quad\geq \int_{Q_{\nu,z}}\iQ g\Big(\floor[\Big]{\frac{1}{\nu\en}}y,\iQ\aaa(\xi)(u(x)+w(x,\xi))\,d\xi+w_{\nu,z,n}(y)\Big)\,dy\,dx+\tau_{z,\nu,n}
\end{align*}
with $$\tau_{z,\nu,n}\to 0\quad\text{as }n\to +\infty.$$
Therefore, by \eqref{eq:prop-g} and since $$\Omega\subset \cup_{z\in\Z^N}Q_{\nu,z},$$
 we deduce \eqref{eq:step4-p6} with
$$w_{\nu,n}(x,y):=\sum_{z\in\Z_{\nu}}\chi_{Q_{\nu,z}\cap\Omega}(x)w_{\nu,z,n}(y)\quad\text{for a.e. }x\in\Omega,\,y\in Q.$$
We observe that for $\nu$ fixed only a finite number of terms in the sum above are different from zero, hence properties \eqref{eq:step4-p1}, \eqref{eq:step4-p2} and \eqref{eq:step4-p5} follow immediately.

To prove \eqref{eq:step4-p3}, we notice that the sequence $\Big\{\aaa\Big(\floor[\Big]{\frac{1}{\nu\en}}y\Big)^{-1}w_{\nu,n}(x,y)\Big\}$ is uniformly bounded in $L^p(\Omega\times Q;\R^d)$ by \eqref{eq:A-invertibility} and \eqref{eq:step4-p2}, therefore it is enough to work with a convergent subsequence and check that the limit is uniquely determined. Fix $\varphi\in C^{\infty}_c(\Omega; C^{\infty}_{\rm per}(Q;\R^d))$ and set
$$\psi(x):=u(x)-\Big(\iQ\aaa(z)^{-1}\,dz\Big)\iQ\aaa(y)(u(x)+w(x,y))\,dy$$
for a.e. $x\in\Omega$, $\psi:=0$ outside $\Omega$, and
$$\Z_{\varphi}^{\nu}:=\{z\in\Z^N:\,(Q_{\nu,z}\times Q)\cap\,{\rm supp}\,\varphi\neq\emptyset\}.$$
Then
\bas
&\iOQ \aaa\Big(\floor[\Big]{\frac{1}{\nu\en}}y\Big)^{-1}w_{\nu,n}(x,y)\cdot\varphi(x,y)\,dy\,dx\\
&\quad=\sum_{z\in\Z_{\varphi}^{\nu}}\int_{(Q_{\nu,z}\times Q)\cap\,{\rm supp}\,\varphi}\aaa\Big(\floor[\Big]{\frac{1}{\nu\en}}y\Big)^{-1}w_{\nu,z,n}(y)\cdot\varphi(x,y)\,dy\,dx\\
&\quad= \sum_{z\in\Z_{\varphi}^{\nu}}\iQ\int_{Q_{\nu,z}}\aaa\Big(\floor[\Big]{\frac{1}{\nu\en}}y\Big)^{-1}(w_{\nu,z,n}(y)-\hat{v}_{\nu,z,n}(y))\cdot\varphi(x,y)\,dy\,dx\\
&\qquad+\sum_{z\in\Z_{\varphi}^{\nu}}\iQ\int_{Q_{\nu,z}}\aaa\Big(\floor[\Big]{\frac{1}{\nu\en}}y\Big)^{-1}\hat{v}_{\nu,z,n}(y)\cdot\varphi(x,y)\,dy\,dx.
\end{align*}
By \eqref{eq:A-invertibility}, we have
\bas
&\Big|\sum_{z\in\Z_{\varphi}^{\nu}}\iQ\int_{Q_{\nu,z}}\aaa\Big(\floor[\Big]{\frac{1}{\nu\en}}y\Big)^{-1}(w_{\nu,z,n}(y)-\hat{v}_{\nu,z,n}(y))\cdot\varphi(x,y)\,dy\,dx\Big|\\
&\quad\leq C\sum_{z\in\Z_{\varphi}^{\nu}}\|w_{\nu,z,n}(y)-\hat{v}_{\nu,z,n}(y)\|_{L^p(Q;\R^N)}\|\varphi\|_{L^{\infty}(\Omega\times Q;\R^d)},
\end{align*}
which by \eqref{eq:difference-v-w} converges to zero as $n\to +\infty$ (here we used the fact that the previous series is actually a finite sum for every $\nu\in\N$ fixed). On the other hand, 
\ba{eq:step4-comput1}
&\sum_{z\in\Z_{\varphi}^{\nu}}\iQ\int_{Q_{\nu,z}}\aaa\Big(\floor[\Big]{\frac{1}{\nu\en}}y\Big)^{-1}\hat{v}_{\nu,z,n}(y)\cdot\varphi(x,y)\,dy\,dx\\
\nn&\quad=\sum_{z\in\Z_{\varphi}^{\nu}}\iQ\int_{Q_{\nu,z}}\Big[\aaa\Big(\floor[\Big]{\frac{1}{\nu\en}}y\Big)^{-1}T_{\frac{1}{\nu}}v_{\nu,n}\Big(\frac{z}{\nu},y\Big)-T_{\frac{1}{\nu}}\psi\Big(\frac{z}{\nu},y\Big)\Big]\cdot\varphi(x,y)\,dy\,dx\\
\nn&\qquad+\sum_{z\in\Z_{\varphi}^{\nu}}\iQ\int_{Q_{\nu,z}}T_{\frac{1}{\nu}}\psi\Big(\frac{z}{\nu},y\Big)\cdot\varphi(x,y)\,dy\,dx.
\end{align}
By the periodicity of $\aaa$, the first term in the right-hand side of \eqref{eq:step4-comput1} satisfies 
\bas
&\lim_{n\to +\infty}\sum_{z\in\Z_{\varphi}^{\nu}}\iQ\int_{Q_{\nu,z}}\Big[\aaa\Big(\floor[\Big]{\frac{1}{\nu\en}}y\Big)^{-1}T_{\frac{1}{\nu}}v_{\nu,n}\Big(\frac{z}{\nu},y\Big)-T_{\frac{1}{\nu}}\psi\Big(\frac{z}{\nu},y\Big)\Big]\cdot\varphi(x,y)\,dy\,dx\\
&\quad =\lim_{n\to +\infty}\Big(\frac{1}{\nu}\Big)^N\sum_{z\in\Z_{\varphi}^{\nu}}\iQ\iQ\Big[\aaa\Big(\floor[\Big]{\frac{1}{\nu\en}}y\Big)^{-1}v_{\nu,n}\Big(\frac{z}{\nu}+\frac{y}{\nu}\Big)-\psi\Big(\frac{z}{\nu}+\frac{y}{\nu}\Big)\Big]\cdot\varphi\Big(\frac{z}{\nu}+\frac{\eta}{\nu},y\Big)\,dy\,d\eta\\
&\quad =\lim_{n\to +\infty}\sum_{z\in\Z_{\varphi}^{\nu}}\int_{Q_{\nu,z}}\iQ\Big[\aaa\Big(\floor[\Big]{\frac{1}{\nu\en}}\nu x\Big)^{-1}v_{\nu,n}(x)-\psi(x)\Big]\cdot\varphi\Big(\frac{1}{\nu}\floor{\nu x}+\frac{1}{\nu}\eta,\nu x\Big)\,d\eta\,dx\\
&\quad =\lim_{n\to +\infty}\int_{\cup_{z\in\Z_{\varphi}^{\nu}}Q_{\nu,z}}\Big[\aaa\Big(\floor[\Big]{\frac{1}{\nu\en}}\nu x\Big)^{-1}v_{\nu,n}(x)-\psi(x)\Big]\cdot\Bigg(\iQ\varphi\Big(\frac{1}{\nu}\floor{\nu x}+\frac{1}{\nu}\eta,\nu x\Big)\,d\eta\Bigg)\,dx.
\end{align*}
By \eqref{eq:step3-p3}, and recalling that $\psi$ and $\{v_{\nu,n}\}$ have been extended to $0$ outside $\Omega$, we conclude that 
\bas
&\lim_{n\to +\infty} \int_{\Omega}\iQ \aaa\Big(\floor[\Big]{\frac{1}{\nu\en}}y\Big)^{-1}w_{\nu,n}(x,y)\cdot\varphi(x,y)\,dy\,dx\\
&\quad=\sum_{z\in\Z_{\nu}} \iQ \int_{Q_{\nu,z}}T_{\frac{1}{\nu}}\psi\Big(\frac{z}{\nu},y\Big)\cdot\varphi(x,y)\,dx\,dy,
\end{align*}
and so
$$\aaa\Big(\floor[\Big]{\frac{1}{\nu\en}}y\Big)^{-1}w_{\nu,n}(x,y)\wk \sum_{z\in\Z_{\nu}}\chi_{Q_{\nu,z}\cap\Omega}(x)T_{\frac{1}{\nu}}\psi\Big(\frac{z}{\nu},y\Big)$$
weakly in $L^p(\Omega\times Q;\R^d)$, as $n\to +\infty$. Finally, we claim that
\be{eq:step4-last-conv}\sum_{z\in\Z_{\nu}}\chi_{Q_{\nu,z}\cap\Omega}(x)T_{\frac{1}{\nu}}\psi\Big(\frac{z}{\nu},y\Big)\to \psi(x)\ee
strongly in $L^p(\Omega;\R^d)$ as $\nu\to +\infty$. 

Indeed, let $\varphi\in L^{p'}(\Omega\times Q;\R^d)$. Then by Holder's inequality
\bas
&\Big|\iOQ \Big(\sum_{z\in\Z_{\nu}}\chi_{Q_{\nu,z}\cap\Omega}(x)T_{\frac{1}{\nu}}\psi\Big(\frac{z}{\nu},y\Big)-\psi(x)\Big)\cdot\varphi(x,y)\,dy\,dx\Big|\\
&\quad=\Big|\sum_{z\in\Z_{\nu}}\int_{Q_{\nu,z}\cap\Omega}\iQ\Big(T_{\frac{1}{\nu}}\psi\Big(\frac{z}{\nu},y\Big)-\psi(x)\Big)\cdot\varphi(x,y)\,dy\,dx\Big|\\
&\quad=\Big|\sum_{z\in\Z_{\nu}}\int_{Q_{\nu,z}\cap \Omega}\iQ\Big(\psi\Big(\frac{z}{\nu}+\frac{y}{\nu}\Big)-\psi(x)\Big)\cdot\varphi(x,y)\,dy\,dx\Big|\\
&\quad=\Big|\sum_{z\in\Z_{\nu}}\int_{Q_{\nu,z}\cap \Omega}\iQ\Big(\psi\Big(\frac{1}{\nu}\floor{\nu x}+\frac{y}{\nu}\Big)-\psi(x)\Big)\cdot\varphi(x,y)\,dy\,dx\Big|\\
&\quad=\Big|\iOQ (T_{\frac{1}{\nu}}\psi(x,y)-\psi(x))\cdot\varphi(x,y)\,dy\,dx\Big|\\
&\quad\leq \|T_{\frac{1}{\nu}}\psi(x,y)-\psi(x)\|_{L^p(\Omega\times Q;\R^d)}\|\varphi\|_{L^{p'}(\Omega\times Q;\R^d)}.
\end{align*}
Property \eqref{eq:step4-last-conv}, and thus \eqref{eq:step4-p3}, follow in view of Proposition \ref{prop:conv-unf-op}.\\
\noindent\emph{Step 5:} by Steps 1--4 it follows that
\ba{eq:step5}
&\inf\Big\{\liminf_{\ep\to 0} \int_{\Omega}f(u_{\ep}(x))\,dx:\, u_{\ep}\wk u\quad\text{weakly in }L^p(\Omega;\R^d)\\
\nn&\qquad\text{and }\pdeep{\ep}u_{\ep}\to 0\quad\text{strongly in }W^{-1,p}(\Omega;\R^l)\Big\}\\
\nn&\quad\geq\inf_{w\in C_u}\inf\Big\{\liminf_{\nu\to +\infty}\liminfn\iOQ g\Big(\floor[\Big]{\frac{1}{\nu\en}}y,\iQ\aaa(z)(u(x)+w(x,z))\,dz+w_{\nu,n}(x,y)\Big)\,dy\,dx:\\
\nn&\qquad {\rm div}_y\,w_{\nu,n}(x,y)=0\quad\text{in }W^{-1,p}(Q;\R^l)\text{ for a.e. }x\in\Omega,\\
\nn&\qquad w_{\nu,n}\wk 0\quad\text{weakly in }L^p(\Omega\times Q;\R^d) \text{ as }n\to +\infty,\\
\nn&\qquad\aaa\Big(\floor[\Big]{\frac{1}{\nu \en}}y\Big)^{-1}w_{\nu,n}(x,y)\wk u(x)-\Big(\iQ\aaa(z)^{-1}\,dz\Big)\iQ\aaa(y)(u(x)+w(x,y))\,dy\\
\nn&\qquad\quad\text{weakly in }L^p(\Omega;\R^d)\text{ as }n\to +\infty\text{ and }\nu\to +\infty,\, \text{and }\\
\nn&\qquad\iQ w_{\nu,n}(x,y)\,dy=0\Big\}.
\end{align}
By a diagonalization argument, given $\{w_{\nu, n}\}$ as above we can construct $\{n(\nu)\}$ such that, setting
$$\ep_{\nu}:=\ep_{n(\nu)}\,\quad w_{\nu}(x,y):=w_{\nu, n(\nu)},$$
we obtain the following inequality
\ba{eq:step5-better}
&\inf\Big\{\liminf_{\e\to 0} \int_{\Omega}f(u_{\ep}(x))\,dx:\, u_{\ep}\wk u\quad\text{weakly in }L^p(\Omega;\R^d)\\
\nn&\qquad\text{and }\pdeep{\ep}u_{\e}\to 0\quad\text{strongly in }W^{-1,p}(\Omega;\R^l)\Big\}\\
\nn&\quad\geq\inf_{w\in C_u}\inf\Big\{\liminf_{\nu\to +\infty}\iOQ g\Big(\floor[\Big]{\frac{1}{\nu \ep_{\nu}}}y,\iQ\aaa(z)(u(x)+w(x,z))\,dz+w_{\nu}(x,y)\Big)\,dy\,dx:\\
\nn&\qquad {\rm div}_y\,w_{\nu}(x,y)=0\quad\text{in }W^{-1,p}(Q;\R^l)\text{ for a.e. }x\in\Omega,\\
\nn&\qquad w_{\nu}\wk 0\quad\text{weakly in }L^p(\Omega\times Q;\R^d) \text{ as }\nu\to +\infty,\\
\nn&\qquad\aaa\Big(\floor[\Big]{\frac{1}{\nu\ep_{\nu}}}y\Big)^{-1}w_{\nu}(x,y)\wk u(x)-\Big(\iQ\aaa(z)^{-1}\,dz\Big)\iQ\aaa(y)(u(x)+w(x,y))\,dy\\
\nn&\qquad\quad\text{weakly in }L^p(\Omega;\R^d)\text{ as }\nu\to +\infty,\,\text{and }\iQ w_{\nu}(x,y)\,dy=0\Big\}.
\end{align}
Associating to every sequence $\{w_{\nu}\}$ as in \eqref{eq:step5-better} the maps
$$\phi_{\nu}(x,y):=\iQ\aaa(z)(u(x)+w(x,z))\,dz+w_{\nu}(x,y)\quad\text{for a.e. }x\in\Omega\text{ and }y\in Q,$$ 
inequality \eqref{eq:step5-better} can be rewritten as
\ba{eq:step5-better2}
&\inf\Big\{\liminf_{\ep\to 0} \int_{\Omega}f(u_{\e}(x))\,dx:\, u_{\e}\wk u\quad\text{weakly in }L^p(\Omega;\R^d)\\
\nn&\qquad\text{and }\pdeep{\e}u_{\e}\to 0\quad\text{strongly in }W^{-1,p}(\Omega;\R^l)\Big\}\\
\nn&\quad\geq\inf_{w\in C_u}\inf\Big\{\liminf_{\nu\to +\infty}\iOQ f\Big(\aaa\Big(\floor[\Big]{\frac{1}{\nu\ep_{\nu}}}y\Big)^{-1}\phi_{\nu}(x,y)\Big)\,dy\,dx:\\
\nn&\qquad {\rm div}_y\,\phi_{\nu}(x,y)=0\quad\text{in }W^{-1,p}(Q;\R^l)\text{ for a.e. }x\in\Omega,\\
\nn&\qquad \phi_{\nu}\wk \iQ\aaa(z)(u(x)+w(x,z))\,dz\quad\text{weakly in }L^p(\Omega\times Q;\R^d) \text{ as }\nu\to +\infty,\,\\
\nn&\qquad\aaa\Big(\floor[\Big]{\frac{1}{\nu\ep_{\nu}}}y\Big)^{-1}\phi_{\nu}(x,y)\wk u(x)\quad\text{weakly in }L^p(\Omega;\R^d)\quad\text{as }\nu\to +\infty,\,\text{and }\\
\nn&\qquad\iQ \phi_{\nu}(x,z)\,dz=\iQ\aaa(z)(u(x)+w(x,z))\,dz\Big\}.
\end{align}
Finally, for $\{\phi_{\nu}\}$ as above, considering the maps 
$$v_{\nu}(x,y):=\aaa\Big(\floor[\Big]{\frac{1}{\nu\en}}y\Big)^{-1}\phi_{\nu}(x,y)\quad\text{for a.e. }x\in\Omega\text{ and }y\in Q,$$ 
we deduce that
\ba{eq:step5-better3}
&\inf\Big\{\liminf_{\ep\to 0} \int_{\Omega}f(u_{\e}(x))\,dx:\, u_{\e}\wk u\quad\text{weakly in }L^p(\Omega;\R^d)\\
\nn&\qquad\text{and }\pdeep{\e}u_{\e}\to 0\quad\text{strongly in }W^{-1,p}(\Omega;\R^l)\Big\}\\
\nn&\quad\geq\inf_{w\in \C_u}\inf\Big\{\liminf_{\nu\to +\infty}\iOQ f(v_{\nu}(x,y))\,dy\,dx:\\
\nn&\qquad {\rm div}_y\,\Big(\aaa\Big(\floor[\Big]{\frac{1}{\nu\ep_{\nu}}}y\Big)v_{\nu}(x,y)\Big)=0\quad\text{in }W^{-1,p}(Q;\R^l)\text{ for a.e. }x\in\Omega,\\
\nn&\qquad \aaa\Big(\floor[\Big]{\frac{1}{\nu\ep_{\nu}}}y\Big)v_{\nu}(x,y)\wk \iQ\aaa(z)(u(x)+w(x,z))\,dz\quad\text{weakly in }L^p(\Omega\times Q;\R^d)\text{ as }\nu\to +\infty,\\
\nn&\qquad v_{\nu}(x,y)\wk u(x)\quad\text{weakly in }L^p(\Omega\times Q;\R^d)\text{ as }\nu\to +\infty,\,\text{and }\\
\nn&\qquad\iQ \aaa\Big(\floor[\Big]{\frac{1}{\nu\ep_{\nu}}}z\Big)v_{\nu}(x,z)\,dz=\iQ\aaa(z)(u(x)+w(x,z))\,dz\Big\}\\
\nn&\quad\geq \inf\Big\{\liminf_{\nu\to +\infty}\iOQ f(u(x)+w_{\nu}(x,y))\,dy\,dx:\\
\nn&\qquad {\rm div}_y\,\Big(\aaa\Big(\floor[\Big]{\frac{1}{\nu\ep_{\nu}}}y\Big)(u(x)+w_{\nu}(x,y))\Big)=0\quad\text{in }W^{-1,p}(Q;\R^l)\quad\text{for a.e. }x\in\Omega,\\
\nn&\qquad {\rm div}_x\iQ\Big(\aaa\Big(\floor[\Big]{\frac{1}{\nu\ep_{\nu}}}y\Big)(u(x)+w_{\nu}(x,y)\Big)\,dy=0\quad\text{in }W^{-1,p}(\Omega;\R^l),\,\text{and }\\
\nn&\qquad w_{\nu}(x,y)\wk 0\quad\text{weakly in }L^p(\Omega\times Q;\R^d)\Big\}.
\end{align}
By \eqref{eq:step5-better3} it follows, in particular, that
\ba{eq:step5-better4}
&\inf\Big\{\liminf_{\e\to 0} \int_{\Omega}f(u_{\e}(x))\,dx:\, u_{\e}\wk u\quad\text{weakly in }L^p(\Omega;\R^d)\\
\nn&\qquad\text{and }\pdeep{\e}u_{\e}\to 0\quad\text{strongly in }W^{-1,p}(\Omega;\R^l)\Big\}\\
\nn&\quad\geq \inf\Big\{\liminf_{\nu\to +\infty}\iOQ f(u_{\nu}(x)+w_{\nu}(x,y))\,dy\,dx:\\
\nn&\qquad u_{\nu}\wk u\quad\text{weakly in }L^p(\Omega;\R^d),\,w_{\nu}\in \mathcal{C}^{\pdeor\Big(\floor[\Big]{\frac{1}{\nu\ep_{\nu}}}\cdot\Big)}_{u^{\nu}},\\
\nn&\qquad w_{{\nu}}(x,y)\wk 0\quad\text{weakly in }L^p(\Omega\times Q;\R^d)\Big\}.
\end{align}
Fix $\{u_{\nu}\}$ and $\{w_{\nu}\}$ as in \eqref{eq:step5-better4}. Then there exists a constant $r$ such that 
\be{eq:w-truncated}\sup_{\nu\in\N}\|w_{\nu}\|_{L^p(\Omega\times Q;\R^d)}\leq r.\ee
Therefore, setting
$$n_{\nu}:=\floor[\Bigg]{\frac{1}{\nu\ep_{\nu}}},$$
and
$$u_n:=\begin{cases}u_{\nu}&\text{if }n=n_{\nu},\\u&\text{otherwise},\end{cases}$$
we have
\ba{eq:step4-better5bis}
&\liminf_{\nu\to +\infty}\iOQ f(u_{\nu}(x)+w_{\nu}(x,y))\,dy\,dx\geq \liminf_{\nu\to+\infty}\mathcal{F}_{\pdeor\Big(\floor[\Big]{\frac{1}{\nu\ep_{\nu}}}\cdot\Big)}^{\, r}(u_{\nu})\\
\nn&\quad= \liminf_{\nu\to+\infty}\overline{\mathcal{F}}_{\pdeor\Big(\floor[\Big]{\frac{1}{\nu\ep_{\nu}}}\cdot\Big)}^{\, r}(u_{\nu})= \liminf_{\nu\to+\infty}\overline{\mathcal{F}}_{\pdeor(n_{\nu}\cdot)}^{\, r}(u_{n_\nu})\geq \liminfn\FBN^{\, r}(u_{n})\\
\nn&\quad\geq\inf\Big\{\liminfn \FBN^{\, r}(u_n):\,u_n\wk u\quad\text{weakly in }L^p(\Omega;\R^d)\Big\}.
\end{align}
The thesis follows by taking the infimum with respect to $r$ in the right-hand side of \eqref{eq:step4-better5bis} and by invoking \eqref{eq:step5-better4}.
\end{proof}
\begin{remark}
{We point out that the truncation by $r$ in \eqref{eq:w-truncated} and \eqref{eq:step4-better5bis} will be used in a fundamental way. Infact it guarantees that the sequences constructed in the proof of the limsup inequality (see Proposition \ref{thm:limsup}) are uniformly bounded in $L^p$, and hence it allows us to apply Attouch's diagonalization lemma (see \cite[Lemma 1.15 and Corollary 1.16]{attouch}) in Step 3 of the proof of Proposition \ref{thm:limsup}.}
\end{remark}
\begin{remark}
\label{rk:even-better}
In the case in which $A^i\in L^{\infty}(\R^N;\M^{l\times d})$, $i=1,\cdots, N$, the previous proof yields the inequality
\bas
&\inf\Big\{\liminf_{\e\to 0} \int_{\Omega}f(u_{\e}(x))\,dx:\,u_{\e}\wk u\quad\text{ weakly in }L^p(\Omega;\rd)\\
&\qquad\text{and }\pdeep{\e}u_{\e}\to 0\text{ strongly in  }W^{-1,q}(\Omega;\rl)\quad\text{for every }1\leq q<p\Big\}\\
&\quad\geq \F(u).
\end{align*}
To see that, arguing as in Step 1 we get
\bas
\inf&\Big\{\liminf_{\ep\to 0} \int_{\Omega}f(u_{\e}(x))\,dx:\, u_{\e}\wk u\quad\text{weakly in }L^p(\Omega;\R^d)\\
\nn&\qquad\text{and }\pdeep{\e}u_{\e}\to 0\quad\text{strongly in }W^{-1,q}(\Omega;\R^l)\quad\text{for every }1\leq q<p\Big\}\\
\nn&\geq\quad \inf_{w\in\C_u}\inf \Bigg\{\liminfn \int_{\Omega} g\Big(\frac{x}{\en},v_{\en}(x)\Big)\,dx:\\
\nn&\qquad v_{\en}\wk \iQ {\mathcal{A}}(y)(u(x)+w(x,y))\,dy\quad\text{weakly in }L^p(\Omega;\R^d),\\
\nn&\qquad {\rm div}\, v_{\en}\to 0\quad\text{strongly in }W^{-1,q}(\Omega;\R^l)\quad\text{for every }1\leq q<p,\\
\nn&\qquad\text{and }\aaa\Big(\frac{x}{\en}\Big)^{-1}v_{\en}\wk u\quad\text{weakly in }L^p(\Omega;\R^d)\Bigg\}.
\end{align*}
By Lemma \ref{lemma:cor-f-m} and Remark \ref{rk:add-for-nonsmooth}, inequality \eqref{eq:step2} is replaced by
\bas
&\inf_{w\in\C_u}\inf \Bigg\{\liminfn \int_{\Omega} g\Big(\frac{x}{\en},v_{\en}(x)\Big)\,dx:\\
\nn&\qquad v_{\en}\wk\iQ \iQ {\mathcal{A}}(y)(u(x)+w(x,y))\,dy\quad\text{weakly in }L^p(\Omega;\R^d),\\
\nn&\qquad {\rm div}\, v_{\en}\to 0\quad\text{strongly in }W^{-1,q}(\Omega;\R^l)\quad\text{for every }1\leq q<p,\\
\nn&\qquad\text{and }\aaa\Big(\frac{x}{\en}\Big)^{-1}v_{\en}\wk u\quad\text{weakly in }L^p(\Omega;\R^d)\Bigg\}\\
\nn&\geq\quad \inf_{w\in\C_u}\inf \Bigg\{\liminfn \int_{\Omega} g\Big(\frac{x}{\en},\iQ {\mathcal{A}}(y)(u(x)+w(x,y))\,dy+\tilde{v}_{\en}(x)\Big)\,dx:\\
\nn&\qquad \{\tilde{v}_{\en}\}\text{ is p-equiintegrable},\, \tilde{v}_{\en}\wk 0\quad\text{weakly in }L^p(\Omega;\R^d),\\
\nn&\qquad{\rm div}\, \tilde{v}_{\en}= 0\text{ in }W^{-1,p}(\Omega;\R^l),\, \text{and} \\
\nn&\qquad \aaa\Big(\frac{x}{\en}\Big)^{-1}\tilde{v}_{\en}\wk u(x)-\Big(\iQ\aaa(z)^{-1}\,dz\Big)\iQ\aaa(y)^{-1}(u(x)+w(x,y))\,dy\\
\nn&\qquad\text{weakly in }L^p(\Omega;\R^d)\Bigg\}.
\end{align*}
The result now follows by arguing exactly as in the proof of Proposition \ref{thm:liminf}.
\end{remark}
We finally prove the limsup inequality in Theorem \ref{thm:main-homogenized-pde}.
\begin{proposition}
\label{thm:limsup}
 Under the assumptions of Theorem \ref{thm:main-homogenized-pde}, for every $u\in\C$ there exists a sequence $\{u_{\ep}\}\subset L^p(\Omega;\rd)$ such that
\begin{eqnarray}
\label{eq:wk-limsup}
&&u_{\ep}\wk u\quad\text{weakly in }L^p(\Omega;\rd),\\
\label{eq:pde-limsup}
&&\pdeep{\ep} u_{\ep}\to 0\quad\text{strongly in }W^{-1,p}(\Omega;\rl),\\
\label{eq:energy-limsup}
&&\limsup_{\ep\to 0}\iO f(u_{\ep}(x))\,dx\leq\F(u).
\end{eqnarray}
\end{proposition}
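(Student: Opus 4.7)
The plan is to argue by a nested diagonal construction. If $u \notin \C$ then $\F(u) = +\infty$ and there is nothing to prove, so I assume $u \in \C$ and fix $\eta > 0$. The first step is to invoke the definition \eqref{eq:def-lsc-energy} of $\F$ to select $r > 0$ and a sequence $\{u_n\} \subset L^p(\Omega;\rd)$ with $u_n \wk u$ weakly in $L^p$, $u_n \in \mathcal{C}^{\pdeor(n\cdot)}_r$ for every $n$, and $\liminfn \FBN^{\,r}(u_n) \leq \F(u) + \eta$; passing to a subsequence, I can assume $\FBN^{\,r}(u_n) \leq \F(u) + 2\eta$ for all $n$. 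For each $n$ the infimum defining $\FBN^{\,r}(u_n)$ then furnishes $w_n \in \mathcal{C}^{\pdeor(n\cdot)}_{u_n}$ with $\|w_n\|_{L^p(\Omega\times Q;\rd)} \leq r$ and $\iOQ f(u_n + w_n)\,dy\,dx \leq \FBN^{\,r}(u_n) + 1/n$.

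The second step is to produce, for each fixed $n$, an inner recovery sequence in $\ep$. The key observation is that Proposition \ref{lemma:limit-maps} applies verbatim to the coefficients $B^i(y) := A^i(ny)$, which are smooth, $Q$-periodic, and inherit the uniform invertibility \eqref{eq:A-invertibility} via $\mathcal{B}(y) = \aaa(ny)$. Applied to the pair $(u_n, w_n) \in \mathcal{C}^{\pdeor(n\cdot)}_{u_n}$, it produces $\{\phi_\alpha^n\}_{\alpha>0}$ with $\phi_\alpha^n \sts u_n + w_n$ strongly two-scale in $L^p(\Omega \times Q;\rd)$ and $\sum_{i=1}^N \partial_{x_i}(B^i(x/\alpha)\phi_\alpha^n) \to 0$ strongly in $W^{-1,p}(\Omega;\rl)$ as $\alpha \to 0$. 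Inspecting the explicit construction $\hat v_\delta(x, x/\alpha)$ of Step~2 in that proof, together with Proposition \ref{prop:simple-2-scale}, the growth bound \eqref{eq:growth-p-pde}, and Proposition \ref{prop:to-add1}, I would also insert the energy difference $|\int_\Omega f(\phi_\alpha^n)\,dx - \iOQ f(u_n+w_n)\,dy\,dx|$ as an additional quantity in the Attouch diagonalization used there, thereby also arranging $\int_\Omega f(\phi_\alpha^n)\,dx \to \iOQ f(u_n + w_n)\,dy\,dx$ as $\alpha \to 0$. Exploiting the identity $B^i(x/(n\ep)) = A^i(x/\ep)$, the rescaling $\psi_\ep^n := \phi_{n\ep}^n$ transfers the differential constraint to the original operator: as $\ep \to 0$ with $n$ fixed, $\psi_\ep^n \wk u_n$ weakly in $L^p$, $\pdeep{\ep}\psi_\ep^n \to 0$ strongly in $W^{-1,p}(\Omega;\rl)$, and $\int_\Omega f(\psi_\ep^n)\,dx \to \iOQ f(u_n + w_n)\,dy\,dx \leq \F(u) + 3\eta$ (for $n$ large).

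The third step is the outer diagonalization. The strong two-scale convergence of $\psi_\ep^n$ and the truncation $\|w_n\|_{L^p}\leq r$ give $\limsup_{\ep \to 0}\|\psi_\ep^n\|_{L^p(\Omega;\rd)} \leq \|u_n\|_{L^p(\Omega;\rd)} + r \leq M$, a bound uniform in $n$ because $u_n \wk u$ in $L^p$. Applying Attouch's lemma \cite[Lemma 1.15 and Corollary 1.16]{attouch} jointly to the three quantities $\|\psi_\ep^n - u_n\|_{W^{-1,p}(\Omega;\rd)}$, $\|\pdeep{\ep}\psi_\ep^n\|_{W^{-1,p}(\Omega;\rl)}$ and $\bigl|\int_\Omega f(\psi_\ep^n)\,dx - \iOQ f(u_n + w_n)\,dy\,dx\bigr|$, I extract $\ep(n) \to 0$ such that $v_n := \psi_{\ep(n)}^n$ converges to $u$ strongly in $W^{-1,p}$ (and hence, by the uniform $L^p$ bound, weakly in $L^p$), satisfies $\pdeep{\ep(n)}v_n \to 0$ strongly in $W^{-1,p}(\Omega;\rl)$, and $\limsup_n \int_\Omega f(v_n)\,dx \leq \F(u) + 3\eta$. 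A final diagonalization in $\eta \to 0$ delivers a recovery sequence satisfying \eqref{eq:wk-limsup}--\eqref{eq:energy-limsup}.

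The main obstacle is exactly this passage from the inner recovery at fixed $n$ to a single $\ep$-indexed sequence: without a uniform-in-$n$ $L^p$ bound on $\{\psi_\ep^n\}$, the diagonalization would neither close nor identify the correct weak $L^p$ limit. The truncation parameter $r$ in the definition of $\FBN^{\,r}$ is engineered precisely to supply this bound, in line with Remark (ii) following Theorem \ref{thm:second-main}; it is also what allows the application of Proposition \ref{lemma:limit-maps} to yield a bounded family whose energy cost is controlled uniformly in $n$.
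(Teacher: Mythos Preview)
Your proposal is correct and follows essentially the same three-layer strategy as the paper: select $r$, $\{u_n\}$, $\{w_n\}$ from the definition of $\F$; build an inner recovery sequence for each fixed $n$; diagonalize using the uniform $L^p$ bound supplied by the truncation $\|w_n\|_{L^p}\leq r$. The only organizational difference is that you package the inner recovery (the paper's Steps~1--2) as a direct invocation of Proposition~\ref{lemma:limit-maps} applied to the rescaled coefficients $B^i(y)=A^i(ny)$, with the energy term $\bigl|\int_\Omega f(\phi^n_\alpha)-\iOQ f(u_n+w_n)\bigr|$ appended to the Attouch diagonalization there, whereas the paper spells out that construction explicitly (first for $C^1$ data via $u_\ep(x)=u(x)+w(x,x/(n\ep))$, then for general data by regularizing, projecting through Lemma~\ref{lemma:projection}, and diagonalizing). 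Your rescaling $\psi^n_\ep=\phi^n_{n\ep}$ to convert $B^i(x/\alpha)$ into $A^i(x/\ep)$ is exactly the identity the paper uses implicitly when it writes $v^k(x,x/(n\ep))$, and your final diagonalization chooses $\ep(n)$ rather than the paper's $n(\ep)$, which is an immaterial relabeling.
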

\begin{proof}
We subdivide the proof into three steps.\\
\emph{Step 1}: Fix $n\in\N$. We first show that for every $u\in {\mathcal{C}}^{\pdeor(n\cdot)}\cap C^1(\Omega;\rd)$ and $w\in {\mathcal{C}}_u^{\pdeor(n\cdot)}\cap C^1(\Omega;C^1_{\rm per}(\Rn;\rd))$ there exists a sequence \mbox{$\{u_{\e}\}\subset L^p(\Omega;\rd)$} and a constant $C$ independent of $n$ and $\ep$ such that
\begin{align}
&\label{eq:wk-c1}u_{\e}\sts u+w\quad\text{strongly two-scale in }L^p(\Omega\times Q;\rd),\\
&\label{eq:pde-c1-renamed}\pdeep{\e} u_{\e}\to 0\quad\text{strongly in }W^{-1,p}(\Omega;\rl),\\
&\label{eq:energy-c1}\iO f(u_{\e}(x))\,dx\to\iOQ f(u(x)+w(x,y))\,dy\,dx,
\end{align}
as $\e\to 0$, and
\be{eq:bd-c1}\sup_{\e>0}\|u_{\e}\|_{L^p(\Omega;\R^d)}\leq C(\|u\|_{L^p(\Omega;\R^d)}+\|w\|_{L^p(\Omega\times Q;\R^d)}).\ee

Define
$$u_{\e}(x):=u(x)+w\Big(x,\frac{x}{n\e}\Big)\quad\text{for a.e. }x\in\Omega.$$
By Proposition \ref{prop:simple-2-scale} we have  
\begin{eqnarray*}
&&u_{\e}\wk \iQ (u(x)+w(x,y))\,dy=u(x)\quad\text{weakly in }L^p(\Omega;\rd),\\
&&u_{\e}\sts u+w\quad\text{strongly two-scale in }L^p(\Omega\times Q;\rd),\\
&&f(u_{\e})\wk \iQ f(u(x)+w(x,y))\,dy\quad\text{weakly in }L^1(\Omega),
\end{eqnarray*}
as $\ep\to 0$. In particular, we obtain immediately \eqref{eq:wk-c1} and \eqref{eq:energy-c1}. Property \eqref{eq:bd-c1} follows by \eqref{eq:wk-c1}, Proposition \ref{prop:isometry} and Theorem \ref{thm:equivalent-two-scale}. To prove \eqref{eq:pde-c1-renamed}, we notice that
\ba{eq:equality}
\pdeep{\e}u_{\e}&=\sum_{i=1}^N A^i\Big(\frac{x}{\e}\Big)\Big(\frac{\partial u(x)}{\partial x_i}+\frac{\partial w}{\partial {x_i}}\Big(x,\frac{x}{n\e}\Big)\Big)\\
\nn&\qquad+\frac{1}{n\e}\Big(n\frac{\partial A^i}{\partial y_i}\Big(\frac{x}{\e}\Big)\Big(u(x)+w\Big(x,\frac{x}{n\e}\Big)\Big)+A^i\Big(\frac{x}{\e}\Big)\frac{\partial w}{\partial y_i}\Big(x,\frac{x}{n\e}\Big)\Big)\\
\nn&\quad=\sum_{i=1}^N A^i\Big(n\frac{x}{n\e}\Big)\Big(\frac{\partial u(x)}{\partial x_i}+\frac{\partial w}{\partial x_i}\Big(x,\frac{x}{n\e}\Big)\Big)
\end{align}
where in the last equality we used the fact that $w\in{\mathcal{C}}_u^{\pdeor(n\cdot)}\cap C^1(\Omega;C^1_{\rm per}(\Rn;\rd))$. Applying Proposition \ref{prop:simple-2-scale} we obtain
\be{eq:add-1}
\pdeep{\e}u_{\e}\wk\iQ\sum_{i=1}^N A^i(ny)\Big(\frac{\partial u(x)}{\partial x_i}+\frac{\partial w}{\partial x_i}(x,y)\Big)\,dy\quad\text{weakly in }L^p(\Omega;\rd),
\ee
and hence strongly in $W^{-1,p}(\Omega;\rd)$ by the compact embedding $L^p\hookrightarrow W^{-1,p}.$ On the other hand, since $w\in {\mathcal{C}}_u^{\pdeor(n\cdot)}$, there holds
\be{eq:add-2}\iQ\sum_{i=1}^N A^i(ny)\Big(\frac{\partial u(x)}{\partial x_i}+\frac{\partial w}{\partial x_i}(x,y)\Big)\,dy=\sum_{i=1}^N\frac{\partial}{\partial x_i} \Big(\iQ A^i(ny)(u(x)+w(x,y))\,dy\Big)=0.\ee
Combining \eqref{eq:add-1} with \eqref{eq:add-2} we deduce \eqref{eq:pde-c1}.\\
\emph{Step 2}:
We will now extend the construction in Step 1 to the general case where $u\in{\mathcal{C}}^{\pdeor(n\cdot)}$ and $w\in {\mathcal{C}}_u^{\pdeor(n\cdot)}$.
Extend $u$ and $w$ by setting them equal to zero outside $\Omega$ and $\Omega\times Q$, respectively. We claim that we can find sequences $\{u^k\}$ and $\{w^k\}$ such that $u^k\in C^{\infty}(\bar{\Omega};\rd)$, $w^k\in C^{\infty}(\bar{\Omega};C^{\infty}_{\rm per}(\Rn;\rd))$, and
\begin{align}
\label{eq:conv-a}&u^k\to u\quad\text{strongly in }L^p(\Omega;\rd),\\
\label{eq:conv-b}&w^k\to w\quad\text{strongly in }L^p(\Omega\times Q;\rd),\\
\label{eq:conv-c}&\sum_{i=1}^N\frac{\partial}{\partial {x_i}}\Big(\iQ A^i(ny)(u^k(x)+w^k(x,y))\,dy\Big)\to 0\quad\text{strongly in }W^{-1,p}(\Omega;\rl),
\end{align}
and
\be{eq:conv-d}\sum_{i=1}^N\frac{\partial}{\partial {y_i}}\Big(A^i(ny)(u^k(x)+w^k(x,y))\Big)\to 0\quad\text{strongly in }L^p(\Omega;W^{-1,p}(Q;\R^l)).\ee

Indeed, by first regularizing $u$ and $w$ with respect to the variable $x$, we construct two sequences $\{{u}^k\}$ and $\{\tilde{w}^k\}$ such that ${u}^k\in C^{\infty}(\bar{\Omega};\rd)$, $\tilde{w}^k\in C^{\infty}(\bar{\Omega};L^p_{\rm per}(\Rn;\rd))$, and
\begin{eqnarray*}
&&{u}^k\to u\quad\text{strongly in }L^p(\Omega;\rd),\\
&&\tilde{w}^k\to w\quad\text{strongly in }L^p(\Omega\times Q;\rd),\\
&&\sum_{i=1}^N\frac{\partial}{\partial {x_i}}\Big(\iQ A^i(ny)({u}^k(x)+\tilde{w}^k(x,y))\,dy\Big)\to 0\quad\text{strongly in }W^{-1,p}(\Omega;\rl).
\end{eqnarray*}
In addition,
\be{eq:conv-de}\sum_{i=1}^N\frac{\partial}{\partial {y_i}}\Big(A^i(ny)({u}^k(x)+\tilde{w}^k(x,y))\Big)= 0\quad\text{in }W^{-1,p}(Q;\rl)\text{ for a.e. }x\in\Omega.\ee
Now, by regularizing with respect to $y$ we construct $\{w^k\}$, such that $w^k\in C^{\infty}(\bar{\Omega};C^{\infty}_{\rm per}(\Rn;\rd))$. It is immediate to see that $\{u^k\}$ and $\{w^k\}$ satisfy \eqref{eq:conv-a}--\eqref{eq:conv-c}, and in particular
\be{eq:conv-f}\tilde{w}^k-w^k\to 0\quad\text{strongly in }L^p(\Omega;L^p_{\rm per}(Q;\rd)).\ee
To prove \eqref{eq:conv-d}, consider maps $\varphi\in L^{p'}(\Omega)$ and $\psi\in W^{1,{p'}}_0(\Omega;\rl)$. By the regularity of the operators $A^i$ and by \eqref{eq:conv-de} there holds
\bas
&\Big|\iO\scal{\sum_{i=1}^N\frac{\partial }{\partial y_i}\Big(A^i(ny)(u^k(x)+w^k(x,y))\Big)}{\psi(y)}\varphi(x)\,dx\Big|\\
&\quad=\Big|\sum_{i=1}^N\iOQ A^i(ny)(u^k(x)+w^k(x,y))\cdot \varphi(x)\frac{\partial \psi(y)}{\partial y_i}\,dy\,dx\Big|\\
&\quad=\Big|\sum_{i=1}^N\iOQ A^i(ny)(u^k(x)+w^k(x,y)-({u}^k(x)+\tilde{w}^k(x,y)))\cdot \varphi(x)\frac{\partial \psi(y)}{\partial y_i}\,dy\,dx\Big|\\
&\quad\leq C\|w^k-\tilde{w}^k\|_{L^p(\Omega\times Q;\rd)}\|\psi\|_{W^{1,{p'}}_0(Q;\rd)}\|\varphi\|_{L^{p'}(\Omega)}.
\end{align*}
 Property \eqref{eq:conv-d} follows now by \eqref{eq:conv-f}.
 
Apply Lemma \ref{lemma:projection} and Remark \ref{remark:if-regular} to the sequence $\{u^k+w^k\}$ to construct a sequence \mbox{$\{v^k\}\subset C^1(\Omega;C^{\infty}_{\rm per}(\Rn;\rd))$} such that
\be{eq:conv-projected}
v^k-(u^k+w^k)\to 0\quad\text{strongly in }L^p(\Omega\times Q;\rd),
\ee
\be{eq:pde-projected-1}
\sum_{i=1}^N\frac{\partial}{\partial {x_i}}\Big(\iQ A^i(ny)v^k(x,y)\,dy\Big)=0\quad\text{in }W^{-1,p}(\Omega;\rl)
\ee
and
\be{eq:pde-projected-2}
\sum_{i=1}^N\frac{\partial}{\partial {y_i}}( A^i(ny)v^k(x,y))=0\quad\text{in }W^{-1,p}(Q;\rl)\text{ for a.e. }x\in\Omega.
\ee
Consider now the maps
$$v^k_{\e}(x):=v^k\Big(x,\frac{x}{n\e}\Big)\quad\text{for a.e. }x\in\Omega.$$
By Proposition \ref{prop:simple-2-scale}, arguing as in the proof of \eqref{eq:bd-c1}, we observe that
\begin{eqnarray*}
&&v^k_{\e}\wk \iQ v^k(x,y)\,dy\quad\text{weakly in }L^p(\Omega;\rd),\\
&&v^k_{\e}\sts v^k\quad\text{strongly two-scale in }L^p(\Omega\times Q;\rd),\\
&&\iO f(v^k_{\e}(x))\,dx\to\iOQ f(v^k(x,y))\,dy\,dx.
\end{eqnarray*}
as $\e\to 0$, and there exists a constant $C$ independent of $\ep$ and $k$ such that
$$\|v_{\e}^k\|_{L^p(\Omega;\R^d)}\leq C\|v^k\|_{L^p(\Omega\times Q;\R^d)}$$
for every $\e$ and $k$. Hence, by \eqref{eq:conv-a}, \eqref{eq:conv-b}, and \eqref{eq:conv-projected}, there exists a constant $C$ independent of $\ep$ and $k$ such that
$$\|v_{\e}^k\|_{L^p(\Omega;\R^d)}\leq C(\|u\|_{L^p(\Omega;\R^d)}+\|w\|_{L^p(\Omega\times Q;\R^d)})$$
for every $\e$ and $k$.
 In addition, again by Proposition \ref{prop:simple-2-scale}, proceeding as in the proof of \label{eq:pde-c1} we can establish the analogues of \eqref{eq:equality}--\eqref{eq:add-2} and we conclude that
\begin{multline}
\label{eq:pde-limsup-zero}
\pdeep{\e}v^k_{\e}=\sum_{i=1}^N \frac{\partial}{\partial {x_i}}\Big(A^i\Big(\frac{x}{\ep}\Big)v^k_{\e}(x)\Big)\to \sum_{i=1}^N\frac{\partial}{\partial {x_i}}\Big(\iQ A^i(ny)v^k(x,y)\,dy\Big)=0
\end{multline}
strongly in $W^{-1,p}(\Omega;\rl)$, as $\e\to 0$.
Now, by \eqref{eq:conv-a}, \eqref{eq:conv-b} and \eqref{eq:conv-projected},
\be{eq:limsup-sts}v^k_{\e}\sts u+w\quad\text{strongy two-scale in }L^p(\Omega\times Q;\rd)\ee
as $\e\to 0$ and $k\to +\infty$, in this order. Hence, by Theorem \ref{thm:equivalent-two-scale}, \eqref{eq:growth-p-pde}, \eqref{eq:conv-a}, \eqref{eq:conv-b}, \eqref{eq:conv-projected}, \eqref{eq:pde-limsup-zero} and \eqref{eq:limsup-sts},
\bas
\limsup_{k\to +\infty}\limsup_{\e\to 0}&\Big\{\|T_{\e}v_{\e}^k-(u+w)\|_{L^p(\Omega\times Q;\rd)}+\|\pdeep{\e}v_{\e}^k\|_{W^{-1,p}(\Omega;\rl)}\\
&\quad+\Big|\int_{\Omega}f(v_{\e}^k(x))\,dx-\int_{\Omega}\iQ f(u(x)+w(x,y))\,dy\,dx\Big|\Big\}=0.
\end{align*}
By Attouch's diagonalization lemma \cite[Lemma 1.15 and Corollary 1.16]{attouch} we can extract a sequence $\{k(\e)\}$ such that, setting 
$$v^{\e}:=v^{k(\e)}_{\e},$$
the sequence $\{v^{\e}\}$ satisfies \eqref{eq:wk-c1}--\eqref{eq:bd-c1}.
\\\emph{Step 3}: Let $u\in\C$ and $\eta>0$. Then $\F(u)<+\infty$ and there exists $r_{\eta}>0$ such that
$$\F(u)+\eta>\inf\Big\{\liminfn \FBN^{\,r_{\eta}}(u_n):\,u_n\wk u\quad\text{weakly in }L^p(\Omega;\R^d)\Big\}.$$
In particular, there exists a sequence $\{u_n^{\eta}\}\subset L^p(\Omega;\R^d)$ with $u_n\in\mathcal{C}^{\pdeor(n\cdot)}$ for every $n\in\N$, such that
\be{eq:u-eta-n}u_n^{\eta}\wk u\quad\text{weakly in }L^p(\Omega;\R^d)\ee
as $n\to +\infty$, and
$$\F(u)+\eta\geq\lim_{n\to +\infty}\FBN^{\,r_{\eta}}(u_n^{\eta})=\lim_{n\to +\infty}\FFN^{\,r_{\eta}}(u_n^{\eta}).$$

By the definition of $\FFN^{\,r_{\eta}}$, for every $n\in \N$ there exists $w_n^{\eta}\in {\mathcal{C}}^{\pdeor(n\cdot)}_{u_n^{\eta}}$ such that 
\be{eq:w-eta-n}\|w_n^{\eta}\|_{L^p(\Omega\times Q;\R^d)}\leq r_{\eta},\ee
 and
$$\FFN^{\,r_{\eta}}(u_n^{\eta})\leq \iOQ f(u_n^{\eta}(x)+w_n^{\eta}(x,y))\,dy\,dx\leq \FFN^{\,r_{\eta}}(u_n^{\eta})+\frac{1}{n}.$$
Applying Steps 1 and 2 we construct sequences $\{v^{\eta}_{n,\e}\}\subset L^p(\Omega\times Q;\R^d)$ such that 
\ba{eq:v-eta-n}
&\sup_{\e>0}\|v_{n,\e}^{\eta}\|_{L^p(\Omega\times Q;\R^d)}\leq C\|u_n^{\eta}+w_n^{\eta}\|_{L^p(\Omega\times Q;\R^d)},\\
\nn&v_{n,\e}^{\eta}\wk u_n^{\eta}\quad\text{weakly in }L^p(\Omega\times Q;\rd),\\
\nn&\pdeep{\e} v^{\eta}_{n,\e}\to 0\quad\text{strongly in }W^{-1,p}(\Omega;\rl),\\
\nn&\iO f(v_{n,\e}^{\eta}(x))\,dx\to\iOQ f(u_n^{\eta}(x)+w_n^{\eta}(x,y))\,dy\,dx,
\end{align}
as $\e\to 0$.
In addition, by \eqref{eq:u-eta-n}--\eqref{eq:v-eta-n}, the sequence $\{v_{n,\e}^{\eta}\}$ is uniformly bounded in $L^p(\Omega\times Q;\R^d)$
Therefore, by the metrizability of bounded sets in the weak $L^p$ topology and Attouch's diagonalization lemma \cite[Lemma 1.15 and Corollary 1.16]{attouch} there exists a sequence $\{n(\e)\}$ such that, setting
$$u_{\e}^{\eta}:=v_{n(\e),\e}^{\eta},$$
properties \eqref{eq:wk-limsup} and \eqref{eq:pde-limsup} are fulfilled, with
$$\limsup_{\e\to 0}\int_{\Omega}f(u_{\e}^{\eta}(x))\,dx\leq\F(u)+\eta.$$
The thesis follows now by the arbitrariness of $\eta$.
\end{proof}
\section{Homogenization for measurable operators}
\label{section:bdd}
Here we prove the main result of this paper, concerning the case in which $A^i\in L^{\infty}_{\rm per}(\R^N;\M^{l\times d})$, $i=1,\cdots,N$. 
\begin{theorem}
\label{thm:main-homogenized-pde-bdd}
 Let $1<p<+\infty$. Let $A^i\in L^{\infty}_{\rm per}(\R^N;\M^{l\times d})$, $i=1,\cdots,N$, assume that the operator $\aaa$ satisfies  
 the invertibility requirement in \eqref{eq:A-invertibility}, and let $\pdeep{\e}$ be the operator defined in \eqref{eq:def-pde-epsilon}. Let $f:\R^d\to[0,+\infty)$ be a continuous function satisfying the growth condition \eqref{eq:growth-p-pde}. Then, for every $u\in L^p(\Omega;\R^d)$ there holds
\bas
 &\inf\Big\{\liminf_{\ep\to 0}\iO f(u_{\ep}(x))\,dx:\,u_{\ep}\wk u\quad\text{weakly in }L^p(\Omega;\rd),\\
 &\qquad\text{and }\pdeep{\e}u_{\e}\to 0\quad\text{strongly in }W^{-1,q}(\Omega;\rl)\text{ for every }1\leq q<p\Big\}\\
 &\quad=\inf\Big\{\limsup_{\ep\to 0}\iO f(u_{\ep}(x))\,dx:\,u_{\ep}\wk u\quad\text{weakly in }L^p(\Omega;\rd),\\
 &\qquad\text{and }\pdeep{\e}u_{\e}\to 0\quad\text{strongly in }W^{-1,q}(\Omega;\rl)\text{ for every }1\leq q<p\Big\}= \F(u).
 \end{align*}
\end{theorem}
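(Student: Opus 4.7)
The liminf inequality is already contained in Remark \ref{rk:even-better}: the five steps of the proof of Proposition \ref{thm:liminf} never invoke the smoothness of the $A^{i}$ and go through verbatim under the weaker assumption that $\pdeep{\e}u_{\e}\to 0$ in $W^{-1,q}(\Omega;\rl)$ for every $1\leq q<p$. It therefore suffices to produce, for every $u\in\C$ and every $\eta>0$, a sequence $\{u_{\ep}\}\subset L^{p}(\Omega;\rd)$ with $u_{\ep}\wk u$ weakly in $L^p$, $\pdeep{\e}u_{\ep}\to 0$ strongly in $W^{-1,q}(\Omega;\rl)$ for every $1\leq q<p$, and $\limsup_{\ep\to 0}\iO f(u_{\ep}(x))\,dx\leq\F(u)+\eta$.

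The idea is to approximate $A^{i}$ by smooth operators and invoke the already established Theorem \ref{thm:main-homogenized-pde}. Choose $\{A^{i}_{k}\}\subset C^{\infty}_{\rm per}(\Rn;\M^{l\times d})$, e.g.\ by convolution with a periodic mollifier, such that $A^{i}_{k}\to A^{i}$ in $L^{s}_{\rm per}(Q;\M^{l\times d})$ for every $1\leq s<+\infty$ and a.e., with $\|A^{i}_{k}\|_{L^{\infty}}\leq\|A^{i}\|_{L^{\infty}}$, and with the associated $\mathcal{A}_{k}$ still satisfying \eqref{eq:A-invertibility} for a common constant. Denote by $\mathcal{C}^{\pdeor_{k}(n\cdot)}_{v}$ the natural analogue of $\mathcal{C}^{\pdeor(n\cdot)}_{v}$ when $A^{i}$ is replaced by $A^{i}_{k}$. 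From the definition of $\F(u)$ extract $r_{\eta}>0$, a sequence $u_{n}\wk u$ with $u_{n}\in\mathcal{C}^{\pdeor(n\cdot)}$, and $w_{n}\in\mathcal{C}^{\pdeor(n\cdot)}_{u_{n}}$ with $\|w_{n}\|_{L^{p}(\Omega\times Q;\rd)}\leq r_{\eta}$ and $\limsup_{n}\iOQ f(u_{n}(x)+w_{n}(x,y))\,dy\,dx\leq\F(u)+\eta$. For each fixed $n$, replacing $A^{i}$ by $A^{i}_{k}$ in \eqref{eq:const1}--\eqref{eq:const2} evaluated on $(u_{n},w_{n})$ produces an error that vanishes in $W^{-1,p}$ as $k\to+\infty$, since $A^{i}_{k}\to A^{i}$ in $L^{s}$ and $u_{n}+w_{n}\in L^{p}$. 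Applying Lemma \ref{lemma:projection} with $\pdeor_{k}(n\cdot)$ in place of $\pdeor$ yields $v_{n,k}\to u_{n}+w_{n}$ strongly in $L^{p}(\Omega\times Q;\rd)$ satisfying the exact $A^{i}_{k}$-constraints, and after subtracting its $y$-mean one gets $\tilde{w}_{n,k}:=v_{n,k}-u_{n}\in\mathcal{C}^{\pdeor_{k}(n\cdot)}_{u_{n}}$ approximating $w_{n}$ in $L^{p}$.

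Because the $A^{i}_{k}$ are smooth, Proposition \ref{thm:limsup} provides, for each $(n,k)$, a sequence $\{u^{n,k}_{\ep}\}_{\ep}$, uniformly bounded in $L^{p}(\Omega;\rd)$, with $u^{n,k}_{\ep}\wk u_{n}$ weakly in $L^p$, $\pdeepk{\e}u^{n,k}_{\ep}\to 0$ in $W^{-1,p}(\Omega;\rl)$, and $\iO f(u^{n,k}_{\ep}(x))\,dx\to\iOQ f(u_{n}(x)+\tilde{w}_{n,k}(x,y))\,dy\,dx$. To convert from $\pdeepk{\e}$ to $\pdeep{\e}$ in $W^{-1,q}$ write
$$\pdeep{\e}u^{n,k}_{\ep}-\pdeepk{\e}u^{n,k}_{\ep}=\sum_{i=1}^{N}\frac{\partial}{\partial x_{i}}\Bigl[(A^{i}-A^{i}_{k})\bigl(\cdot/\ep\bigr)u^{n,k}_{\ep}\Bigr],$$
pick $1/q=1/p+1/s$ with $s$ large, and estimate by H\"older together with the periodicity of $A^{i}-A^{i}_{k}$:
$$\bigl\|(A^{i}-A^{i}_{k})(\cdot/\ep)u^{n,k}_{\ep}\bigr\|_{L^{q}(\Omega;\rd)}\leq C\|A^{i}-A^{i}_{k}\|_{L^{s}_{\rm per}(Q;\M^{l\times d})}\|u^{n,k}_{\ep}\|_{L^{p}(\Omega;\rd)},$$
which tends to zero as $k\to+\infty$, uniformly in $\ep$. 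Severini--Egoroff applied to the a.e.\ convergence $A^{i}_{k}\to A^{i}$, combined with the $p$-equiintegrability of $\{u^{n,k}_{\ep}\}$ coming from the recovery construction, handles the set where pointwise convergence fails to be uniform.

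A triple diagonal extraction in $(n,k,\ep)$ via Attouch's lemma \cite[Lemma 1.15, Corollary 1.16]{attouch} then selects $u_{\ep}:=u^{n(\ep),k(\ep)}_{\ep}$ with $u_{\ep}\wk u$ weakly in $L^p$, $\pdeep{\e}u_{\ep}\to 0$ strongly in $W^{-1,q}(\Omega;\rl)$ for every $1\leq q<p$, and $\limsup_{\ep\to 0}\iO f(u_{\ep}(x))\,dx\leq\F(u)+\eta$; letting $\eta\to 0$ concludes. The main obstacle is the projection step: one must verify that the sequence $(u_{n}+w_{n})_{k}$ (with $n$ fixed and $k\to+\infty$) fits the hypotheses of Lemma \ref{lemma:projection} with $\pdeor_{k}(n\cdot)$ in place of $\pdeor$, and that the resulting $\tilde{w}_{n,k}$ carry a uniform $L^{p}$ bound and converge strongly to $w_{n}$ in $L^{p}$; here the uniform ellipticity of $\mathcal{A}_{k}$ inherited from \eqref{eq:A-invertibility} is essential. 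The restriction $q<p$ is intrinsic to this approximation scheme, as it is exactly what lets one trade an $L^{s}$-norm of $A^{i}-A^{i}_{k}$ against an $L^{p}$-norm of $u^{n,k}_{\ep}$ via H\"older when comparing $\pdeep{\e}$ and $\pdeepk{\e}$.
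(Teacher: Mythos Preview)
Your overall strategy is the same as the paper's: liminf via Remark~\ref{rk:even-better}, limsup by mollifying the $A^{i}$, projecting onto the $\pdeor_{k}(n\cdot)$-constraint via Lemma~\ref{lemma:projection}, invoking the smooth-case recovery (Steps~1--2 of Proposition~\ref{thm:limsup}), and diagonalizing. The paper packages the projection step you sketch into Lemma~\ref{lemma:approximation}; note a small imprecision in your description: the $y$-mean of $v_{n,k}$ is not $u_{n}$ but some $\tilde u_{n,k}:=\iQ v_{n,k}(x,y)\,dy$, so you obtain $\tilde w_{n,k}\in\mathcal{C}^{\pdeor_{k}(n\cdot)}_{\tilde u_{n,k}}$ with $\tilde u_{n,k}\to u_{n}$ strongly---this is harmless for the diagonal argument, and is exactly how the paper writes it.

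The one genuine difference is your conversion from $\pdeepk{\e}$ to $\pdeep{\e}$. The paper first truncates via Lemma~\ref{lemma:truncation} to force $p$-equiintegrability, then applies Severini--Egoroff and the unfolding operator (the argument of Lemma~\ref{lemma:limit-maps-bdd}). Your H\"older estimate with $1/q=1/p+1/s$ and the periodicity bound $\|(A^{i}-A^{i}_{k})(\cdot/\ep)\|_{L^{s}(\Omega)}\leq C\|A^{i}-A^{i}_{k}\|_{L^{s}(Q)}$ is more direct and needs only the uniform $L^{p}$ bound on $u^{n,k}_{\ep}$---so your subsequent sentence about Egoroff and $p$-equiintegrability is superfluous (and in fact the recovery sequence from Proposition~\ref{thm:limsup} is not obviously $p$-equiintegrable after diagonalization, which is why the paper truncates). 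Drop that sentence; your H\"older route already closes the argument and is cleaner than the paper's.
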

The strategy of the proof consists in constructing a sequence of operators $\pdeor_k$ with smooth coefficients which approximate the operator $\pdeor$, so that Theorem \ref{thm:main-homogenized-pde} can be applied to each $\pdeor_k$. Let $\{\rho_k\}$ be a sequence of mollifiers and consider the operators
 $$\pdeor_k:L^p(\Omega;\rd)\to W^{-1,p}(\Omega;\rl)$$ defined as
 \be{eq:operators-k}\pdeor_k v(x):=\sum_{i=1}^N A^i_k(x)\frac{\partial v(x)}{\partial {x_i}},\ee
 where $A^i_k:=A^i\ast\rho_k$ for every $i=1,\cdots,N$, and for every $k$. Then $A^i_k\in C^{\infty}_{\rm per}(\R^N;\M^{l\times d})$, $i=1,\cdots, N$, for every $k$, 
 \be{eq:meas-conv}A^i_k\to A^i\quad\text{strongly in }L^m(Q;\M^{l\times d})\ee
for $1\leq m<+\infty,\quad i=1,\cdots,N$,
 \be{eq:unif-bd}\|A^i_k\|_{L^{\infty}(Q;\M^{l\times d})}\leq \|A^i\|_{L^{\infty}(Q;\M^{l\times d})}\quad\text{for } i=1,\cdots,N,\ee
and the operators $\pdeor_k$ satisfy the uniform ellipticity condition
 \be{eq:A-invertibility-bdd}
\mathcal{A}_k(x)\lambda\cdot\lambda\geq\alpha|\lambda|^2\quad\text{for every }\lambda\in\rd,\quad\text{for every }k.
\ee

We first prove two preliminary lemmas. The first one will allow us to approximate every element $u\in\C$ by sequences $\{u^k\}\subset L^p(\Omega;\rd)$ with $u^k\in \ck$ for every $k$.
\begin{lemma}
\label{lemma:approximation}
Let $1<p<+\infty$. Let $\pdeor$ be as in Theorem \ref{thm:main-homogenized-pde-bdd} and let $\{\pdeor_k\}$ be the sequence of operators with smooth coefficients constructed as above. Let $\C$ be the class introduced in \eqref{eq:limit-domain} and let $u\in \C$. Then there exists a sequence $\{u^{k}\}\subset L^p(\Omega;\rd)$ such that $u^{k}\in\ck$ for every $k$, and
$$u^{k}\to u\quad\text{strongly in }L^p(\Omega;\rd).$$
Moreover, for every $w\in\C_u$ there exists a sequence $\{w^{k}\}\subset L^p(\Omega\times Q;\rd)$ such that $w^{k}\in\ck_{u^k}$ for every $k$ and
$$w^{k}\to w\quad\text{strongly in }L^p(\Omega\times Q;\rd).$$
\end{lemma}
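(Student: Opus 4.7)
The cornerstone of the construction is that the uniform invertibility of $\aaa$ and $\aaa_k$ (see \eqref{eq:A-invertibility-bdd}) allows one to transfer the exact $\pdeor_k$-constraints back to the exact $\pdeor$-constraints. Given $u\in\C$ and $w\in\C_u$, I would set $v(x,y):=u(x)+w(x,y)$ and define
$$v^k(x,y):=\aaa_k(y)^{-1}\aaa(y)\,v(x,y).$$
By construction $\aaa_k(y)\,v^k(x,y)=\aaa(y)\,v(x,y)$ pointwise, so reading off the $i$-th block of length $l$ of both sides yields $A^i_k(y)v^k(x,y)=A^i(y)v(x,y)$ for every $i=1,\dots,N$. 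Consequently the two $\pdeor_k$-constraints
$$\sum_{i=1}^N\frac{\partial}{\partial x_i}\Big(\iQ A^i_k(y)v^k(x,y)\,dy\Big)=0\quad\text{in }W^{-1,p}(\Omega;\R^l),$$
$$\sum_{i=1}^N\frac{\partial}{\partial y_i}\bigl(A^i_k(y)v^k(x,y)\bigr)=0\quad\text{in }W^{-1,p}(Q;\R^l)\text{ for a.e. }x\in\Omega$$
reduce verbatim to the corresponding $\pdeor$-constraints on $v=u+w$, which hold because $w\in\C_u$.

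The next step would be to establish $v^k\to v$ strongly in $L^p(\Omega\times Q;\rd)$. By \eqref{eq:A-invertibility-bdd} one has $\|\aaa_k^{-1}\|_{L^\infty}\le 1/\alpha$ uniformly in $k$, so
$$\|v^k-v\|_{L^p(\Omega\times Q;\rd)}=\|\aaa_k^{-1}(\aaa-\aaa_k)v\|_{L^p(\Omega\times Q;\rd)}\le\tfrac{1}{\alpha}\|(\aaa-\aaa_k)v\|_{L^p(\Omega\times Q;\rd)}.$$
To prove $(\aaa-\aaa_k)v\to 0$ in $L^p$ I would approximate $v$ by $v_\delta\in L^\infty(\Omega\times Q;\rd)$ with $v_\delta\to v$ in $L^p$, and split
$$\|(\aaa-\aaa_k)v\|_{L^p}\le\|(\aaa-\aaa_k)(v-v_\delta)\|_{L^p}+\|(\aaa-\aaa_k)v_\delta\|_{L^p}.$$
By \eqref{eq:unif-bd} the first summand is dominated by $2\|\aaa\|_{L^\infty}\|v-v_\delta\|_{L^p}$, while the second is bounded by $|\Omega|^{1/p}\|v_\delta\|_{L^\infty}\|\aaa-\aaa_k\|_{L^p(Q;\M^{d\times d})}$, which tends to zero as $k\to+\infty$ by \eqref{eq:meas-conv}. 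A standard diagonal choice of $\delta=\delta(k)$ then yields the claim.

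Finally, I would decompose $v^k$ via its $y$-average: set $u^k(x):=\iQ v^k(x,y)\,dy$ and $w^k(x,y):=v^k(x,y)-u^k(x)$. Then $\iQ w^k(x,y)\,dy=0$ by construction, and since $u^k+w^k=v^k$ satisfies the exact $\pdeor_k$-constraints by the first paragraph, we obtain $u^k\in\ck$ and $w^k\in\ck_{u^k}$. The strong convergences $u^k\to u$ in $L^p(\Omega;\rd)$ and $w^k\to w$ in $L^p(\Omega\times Q;\rd)$ are immediate from $v^k\to v$ in $L^p$ together with $\iQ w(x,y)\,dy=0$. The one subtle point is that the $u^k$ produced in this way depends on the chosen $w$; I would read the lemma statement as asserting that for every pair $(u,w)$ with $u\in\C$ and $w\in\C_u$ one may jointly extract a pair $(u^k,w^k)$ with the stated properties, which is the form actually needed in the proof of Theorem~\ref{thm:main-homogenized-pde-bdd}.
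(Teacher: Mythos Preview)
Your argument is correct and takes a genuinely different, more direct route than the paper. The paper's proof multiplies $u+w$ by cut-off functions $\varphi^n$, extends periodically, observes that the resulting $v^n$ satisfy the $\pdeor_k$-constraints only asymptotically (as $k,n\to\infty$), and then invokes the projection machinery of Lemma~\ref{lemma:div} (via the argument of Lemma~\ref{lemma:projection}) to correct $\aaa_k v^{n(k)}$ into a field $S^k$ satisfying the constraints exactly, finally setting $u^k+w^k:=\aaa_k^{-1}S^k$. Your shortcut $v^k:=\aaa_k^{-1}\aaa\,v$ bypasses all of this: the algebraic identity $\aaa_k v^k=\aaa v$ transfers the exact constraints on $v$ to exact $\pdeor_k$-constraints on $v^k$ in one stroke, so no cut-off, no diagonalization, and no projection are needed. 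The price is nil; the only ingredients are the uniform bound $\|\aaa_k^{-1}\|_{L^\infty}\le 1/\alpha$ from \eqref{eq:A-invertibility-bdd} and the $L^p$-convergence $\aaa_k\to\aaa$, both already available. Two minor remarks: the ``diagonal choice of $\delta=\delta(k)$'' is unnecessary, since you can simply take $\limsup_k$ first and then let $\delta\to 0$; and your reading of the lemma (that $u^k$ may depend on the chosen $w$) is exactly how the paper itself constructs and uses it.
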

\begin{proof}
Let $u\in\C$ and let $w\in\C_u$. We first construct a sequence $\{v^n\}\subset L^p(\Omega\times Q;\rd)$, defined as
$$v^n(x):=(u(x)+w(x,y))\varphi^n(x)\quad\text{for a.e. }x\in\Omega, y\in Q,$$
where $\{\varphi_n\}\in C^{\infty}_c(\Omega;[0,1])$ with $\varphi_n\nearrow 1$. Without loss of generality, up to a dilation and a translation we can assume that $\Omega\subset Q$. Extending each map $v^n$ by zero in $Q\setminus\Omega$ and then periodically, and arguing as in the proof of Lemma \ref{lemma:projection}, it is easy to see that \ba{eq:vn}&{v}^n\to u+w\quad\text{strongly in }L^p(Q\times Q;\rd),\\
\nonumber&\sum_{i=1}^N\frac{\partial}{\partial {x_i}}\Big(\iQ A^i(y){v}^n(x,y)\,dy\Big)\to 0\quad\text{strongly in }W^{-1,p}(Q;\rl),\\
\nonumber&\sum_{i=1}^N\frac{\partial}{\partial {y_i}}\Big(A^i(y){v}^n(x,y)\Big)=0\quad\text{in }L^p(Q;W^{-1,p}(Q;\rl)).
\end{align}
By \eqref{eq:meas-conv}, \eqref{eq:unif-bd} and the dominated convergence theorem, we also have
$$A^i_k(y)v^n(x,y)\to A^i(y)v^n(x,y)\quad\text{strongly in }L^p(Q\times Q;\rl)$$
as $k\to +\infty$, for every $n$. Therefore,
\bas
&\sum_{i=1}^N\frac{\partial}{\partial {x_i}}\Big(\iQ A^i_k(y){v}^n(x,y)\,dy\Big)\to 0\quad\text{strongly in }W^{-1,p}(Q;\rl),\\
&\sum_{i=1}^N\frac{\partial}{\partial {y_i}}\Big(A^i_k(y){v}^n(x,y)\Big)\to 0\quad\text{strongly in }L^p(Q;W^{-1,p}(Q;\rl))
\end{align*}
as $k\to +\infty$ and $n\to +\infty$, in this order. In particular,
\bas
&\lim_{n\to +\infty}\lim_{k\to +\infty}\Bigg\{\|{v}^n- (u+w)\|_{L^p(Q\times Q;\rd)}\\
&\quad+\Big\|\sum_{i=1}^N\frac{\partial}{\partial {x_i}}\Big(\iQ A^i_k(y){v}^n(x,y)\,dy\Big)\Big\|_{W^{-1,p}(Q;\rl)}\\
&\quad+\sum_{i=1}^N\|A^i_k(y)v^n(x,y)-A^i(y)(u(x)+w(x,y))\|_{L^p(Q\times Q;\R^d)}\\
&\quad+\Big\|\Big\|\sum_{i=1}^N\frac{\partial}{\partial {y_i}}\Big(A^i_k(y){v}^n(x,y)\Big)\Big\|_{W^{-1,p}(Q;\rl)}\Big\|_{L^p(Q)}\Bigg\}= 0,
\end{align*}
hence by Attouch's diagonalization lemma \cite[Lemma 1.15 and Corollary 1.16]{attouch}, we can extract a subsequence $\{n(k)\}$ such that
\ba{eq:number38}
&v^{n(k)}\to u+w\quad\text{strongly in }L^p(\Omega\times Q;\R^d),\\
&A^i_k(y)v^{n(k)}(x,y)\to A^i(y)(u(x)+w(x,y))\quad\text{strongly in }L^p(Q\times Q;\rl),\\
\nn&\sum_{i=1}^N\frac{\partial}{\partial {x_i}}\Big(\iQ A^i_{k}(y){v}^{n(k)}(x,y)\,dy\Big)\to 0\quad\text{strongly in }W^{-1,p}(Q;\rl),\\
\nn&\sum_{i=1}^N\frac{\partial}{\partial {y_i}}\Big(A^i_{k}(y){v}^{n(k)}(x,y)\Big)\to 0\quad\text{strongly in }L^p(Q;W^{-1,p}(Q;\rl))
\end{align}
as $k\to +\infty$. Setting 
$$R^i_{k}(x,y):=A^i_{k}(y)v^{n(k)}(x,y)\quad\text{for a.e. }(x,y)\in Q\times Q,$$ and defining the mappings $R^{k}\in L^p(Q;L^p_{\rm per}(\Rn;\rd))$ as
$$R^{k}_{ij}:=(R^i_{k})_j,\quad\text{for all }i=1,\cdots,N,\,j=1\cdots,l,$$
we have that
\bas
&R^i_{k}(x,y)\to A^i(y)(u(x)+w(x,y)),\,i=1,\cdots,N \quad\text{strongly in }L^p(Q\times Q;\R^d),\\
&\sum_{i=1}^N \frac{\partial}{\partial {x_i}}\Big(\iQ R^i_{k}(x,y)\,dy\Big)\to 0\quad\text{strongly in }W^{-1,p}(Q;\rl),\\
&\sum_{i=1}^N\frac{\partial}{\partial {y_i}}(R^i_{k}(x,y))\to 0\quad\text{strongly in }L^p(Q;W^{-1,p}(Q;\rl)).
\end{align*}
Therefore, using Lemma \ref{lemma:div} we argue as in Lemma \ref{lemma:projection} and construct a sequence $S^{k}\in L^p(Q;L^p_{\rm per}(\Rn;\rd))$, satisfying
\ba{eq:new-pde-0-r-bdd}
&S^{k}-R^{k}\to 0\quad\text{strongly in }L^p(Q\times Q;\R^d),\\
\label{eq:pde-1-r-bdd}
&\sum_{i=1}^N \frac{\partial}{\partial {x_i}}\Big(\iQ S^i_{k}(x,y)\,dy\Big)=0\quad\text{in }W^{-1,p}(Q;\rl),\\
\label{eq:pde-2-r-bdd}
&\sum_{i=1}^N\frac{\partial}{\partial {y_i}}(S^i_{k}(x,y))=0\quad\text{in }W^{-1,p}(Q;\rl),\quad\text{for a.e. }x\in Q.
\end{align}
Finally, setting 
$$u^k(x):=\iQ \mathcal{A}_{k}(y)^{-1}S^{k}(x,y)\,dy\quad\text{for a.e. }x\in\Omega$$
and
$$w^k(x,y):=\mathcal{A}_{k}(y)^{-1}S^{k}(x,y)-u^k(x)\quad\text{for a.e. }x\in\Omega\text{ and }y\in Q,$$
by \eqref{eq:pde-1-r-bdd} and \eqref{eq:pde-2-r-bdd} we deduce that $w^k\in \mathcal{C}_{u^k}^{\pdeor_{k}}$, {i.e.} $u^k\in\mathcal{C}^{\pdeor_{k}}$ for every $k$. Moreover, by  \eqref{eq:A-invertibility}, \eqref{eq:number38} and \eqref{eq:new-pde-0-r-bdd}, 
\bas
&\|u^k-u\|_{L^p(\Omega;\rd)}=\Big\|\iQ (u^k(x)+w^k(x,y)-(u(x)+w(x,y)))\,dy\Big\|_{L^p(\Omega;\rd)}\\
&\quad\leq C\|u^k+w^k-(u+w)\|_{L^p(\Omega\times Q;\rd)}\\
&\quad\leq\|u^k+w^k-v^{n(k)}\|_{L^p(\Omega\times Q;\rd)}+\|v^{n(k)}-(u+w)\|_{L^p(\Omega\times Q;\rd)}\\
&\quad=\|\mathcal{A}_{k}(y)^{-1}(S^{k}(x,y)-R^{k}(x,y))\|_{L^p(\Omega\times Q;\rd)}+\|v^{n(k)}-(u+w)\|_{L^p(\Omega\times Q;\rd)}\to 0.
\end{align*}
The convergence of $w^k$ to $w$ follows in a similar way.
\end{proof}
In view of Lemma \ref{lemma:approximation} we can prove the analog of Proposition \ref{lemma:limit-maps} in the case in which $A^i\in L^{\infty}_{\rm per}(\R^N;\M^{l\times d})$, $i=1,\cdots, N$.
\begin{lemma}
\label{lemma:limit-maps-bdd}
Under the assumptions of Theorem \ref{thm:main-homogenized-pde-bdd}, there holds
\ba{eq:charact-C-bdd}
 &\C=\Big\{u\in L^p(\Omega;\rd):\,\text{there exists a sequence }\{v_{\e}\}\subset L^p(\Omega;\rd)\text{ such that }\\
 \nn&\quad v_{\e}\wk u\quad\text{weakly in }L^p(\Omega;\rd)\\
 \nn&\quad\text{ and }\pdeep{\e}v_{\e}\to 0\quad\text{strongly in }W^{-1,q}(\Omega;\rl),\,\text{for all }1\leq q<p\Big\}.
 \end{align}
\end{lemma}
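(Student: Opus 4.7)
One inclusion, namely that the set on the right-hand side of \eqref{eq:charact-C-bdd} is contained in $\C$, has already been established in Remark \ref{rk:not-so-easy}, so the plan is to focus on showing that every $u\in\C$ arises as the weak $L^p$ limit of a sequence $\{v_\e\}$ with $\pdeep{\e} v_\e\to 0$ strongly in $W^{-1,q}(\Omega;\rl)$ for all $1\leq q<p$. The obvious strategy is to reduce to the smooth-coefficient case (Proposition \ref{lemma:limit-maps}) through the mollified operators $\pdeor_k$ constructed before Lemma \ref{lemma:approximation}, and then to control the error incurred by replacing $A^i_k$ with $A^i$.

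I would begin by invoking Lemma \ref{lemma:approximation} to produce, for any given $u\in \C$ and any choice of $w\in\C_u$, sequences $u^k\in\ck$ and $w^k\in\ck_{u^k}$ with $u^k\to u$ in $L^p(\Omega;\rd)$ and $w^k\to w$ in $L^p(\Omega\times Q;\rd)$. Since $A^i_k\in C^{\infty}_{\rm per}$ and $\mathcal{A}_k$ inherits the uniform invertibility \eqref{eq:A-invertibility-bdd}, Proposition \ref{lemma:limit-maps} applies to $\pdeor_k$ and yields, for each $k$, a sequence $\{u^k_{\e}\}_\e\subset L^p(\Omega;\rd)$ such that $u^k_{\e}\sts u^k+w^k$ strongly two-scale in $L^p(\Omega\times Q;\rd)$ and $\pdeepk{\e} u^k_{\e}\to 0$ strongly in $W^{-1,p}(\Omega;\rl)$ as $\e\to 0$. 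Proposition \ref{prop:isometry} and Theorem \ref{thm:equivalent-two-scale} give $\|u^k_{\e}\|_{L^p(\Omega;\rd)}\to \|u^k+w^k\|_{L^p(\Omega\times Q;\rd)}$, hence the doubly-indexed family $\{u^k_\e\}$ is uniformly bounded in $L^p$.

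The heart of the argument is the control of the difference
\[
\pdeep{\e}u^k_\e-\pdeepk{\e}u^k_\e=\sum_{i=1}^N\frac{\partial}{\partial x_i}\Big((A^i-A^i_k)\Big(\frac{x}{\e}\Big)u^k_\e(x)\Big)
\]
in $W^{-1,q}(\Omega;\rl)$ for $q<p$. Fix $q<p$ and let $r=pq/(p-q)$, so that $1/q=1/r+1/p$. By H\"older,
\[
\Big\|(A^i-A^i_k)\Big(\frac{x}{\e}\Big)u^k_\e(x)\Big\|_{L^q(\Omega;\rl)}\leq \Big\|(A^i-A^i_k)\Big(\frac{x}{\e}\Big)\Big\|_{L^r(\Omega;\M^{l\times d})}\|u^k_\e\|_{L^p(\Omega;\rd)},
\]
and a change of variables together with the $Q$-periodicity of $A^i-A^i_k$ gives
$\|(A^i-A^i_k)(\cdot/\e)\|_{L^r(\Omega;\M^{l\times d})}\to |\Omega|^{1/r}\|A^i-A^i_k\|_{L^r(Q;\M^{l\times d})}$ as $\e\to 0$. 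By \eqref{eq:meas-conv} this $L^r(Q)$-norm vanishes as $k\to+\infty$, so, combining the two limits and embedding $L^q\hookrightarrow W^{-1,q}$ continuously, we obtain
\[
\limsup_{k\to+\infty}\limsup_{\e\to 0}\|\pdeep{\e}u^k_{\e}-\pdeepk{\e}u^k_{\e}\|_{W^{-1,q}(\Omega;\rl)}=0.
\]
Since $\pdeepk{\e}u^k_{\e}\to 0$ in $W^{-1,p}\subset W^{-1,q}$ for each fixed $k$, it follows that $\limsup_{k\to+\infty}\limsup_{\e\to 0}\|\pdeep{\e}u^k_{\e}\|_{W^{-1,q}(\Omega;\rl)}=0$. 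Moreover, $u^k_\e\wk u^k$ weakly in $L^p$ for each $k$ and $u^k\to u$ strongly in $L^p$, and because bounded subsets of $L^p$ are metrizable in the weak topology, we can quantify both the weak convergence and the $W^{-1,q}$-decay simultaneously.

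The last step is a standard diagonalization: picking an exhaustive sequence $q_n\nearrow p$ and using Attouch's lemma \cite[Lemma 1.15 and Corollary 1.16]{attouch}, I would extract $k(\e)\to+\infty$ such that $v_\e:=u^{k(\e)}_\e$ satisfies $v_\e\wk u$ weakly in $L^p(\Omega;\rd)$ together with $\pdeep{\e}v_\e\to 0$ strongly in $W^{-1,q}(\Omega;\rl)$ for every $1\leq q<p$. The main obstacle is precisely the control of the cross term above: having $q<p$ is essential in order to create the room in the H\"older exponent that converts the $L^r$-smallness of $A^i-A^i_k$ into a genuine smallness of the commutator, which is exactly what the statement permits and what Proposition \ref{lemma:limit-maps} could give us for free in the smooth setting via the $W^{-1,p}$-norm.
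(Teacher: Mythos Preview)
Your proposal is correct, and in fact your route is more direct than the one in the paper. Both arguments share the same skeleton: invoke Remark~\ref{rk:not-so-easy} for one inclusion, then use Lemma~\ref{lemma:approximation} plus Proposition~\ref{lemma:limit-maps} (applied to the smooth operators $\pdeor_k$) to build the doubly-indexed family $\{u^k_\e\}$, and finish by diagonalization. The difference lies entirely in how the commutator
\[
\pdeep{\e}u^k_\e-\pdeepk{\e}u^k_\e=\sum_{i=1}^N\frac{\partial}{\partial x_i}\Big((A^i-A^i_k)\Big(\frac{x}{\e}\Big)u^k_\e\Big)
\]
is handled. The paper first diagonalizes, then \emph{truncates} to force $p$-equiintegrability, and only then controls the commutator in $W^{-1,q}$ by combining Severini--Egoroff (to get $A^i_k\to A^i$ uniformly off a small set $E_n\subset Q$), the unfolding operator (to transfer the $x$-integral to an integral over $\Omega\times Q$), and $p$-equiintegrability (to make the contribution over $E_n$ small). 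You bypass all of this with a single H\"older estimate: since $q<p$ gives room for an exponent $r=pq/(p-q)<\infty$, you bound the commutator in $L^q$ by $\|(A^i-A^i_k)(\cdot/\e)\|_{L^r(\Omega)}\|u^k_\e\|_{L^p(\Omega)}$, and the periodicity of $A^i-A^i_k$ plus \eqref{eq:meas-conv} make the first factor vanish in the limit $\e\to 0$, $k\to\infty$. This is shorter and avoids the truncation step altogether; what the paper's approach buys in exchange is an explicitly $p$-equiintegrable recovery sequence, which is not needed for the present lemma but is used later in the proof of Theorem~\ref{thm:main-homogenized-pde-bdd}.
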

\begin{proof}
Let $\mathcal{D}$ be the set in the right-hand side of \eqref{eq:charact-C-bdd}. The inclusion $$\mathcal{D}\subset \C$$ follows by Remark \ref{rk:not-so-easy}. To prove the opposite inclusion, let $u\in\C$ and let $w\in \C_u$. By Lemma \ref{lemma:approximation} we construct sequences $\{u^{k}\}\subset L^p(\Omega;\rd)$ and $\{w^{k}\}\subset L^p(\Omega\times Q;\rd)$ such that $u^{k}\in\ck$ for every $k$, $w^{k}\in\ck_{u^k}$ for every $k$, 
$$u^{k}\to u\quad\text{strongly in }L^p(\Omega;\rd)$$
and
$$w^{k}\to w\quad\text{strongly in }L^p(\Omega\times Q;\rd),$$
where $\{\pdeor_k\}$ is the sequence of operators with smooth coefficients defined in \eqref{eq:operators-k}.
By Proposition \ref{lemma:limit-maps}, for every $k$ there exists a sequence $\{v_{\e}^k\}\subset L^p(\Omega;\rd)$ such that
\begin{eqnarray*}
&&v_{\e}^k\sts u^k+w^k\quad\text{strongly 2-scale in }L^p(\Omega\times Q;\rd),\\
&&\pdeepk{\e}v_{\e}^k:=\sum_{i=1}^N\frac{\partial}{\partial x_i}\Big(A^i_k\Big(\frac{x}{\ep}\Big)v_{\e}^k(x)\Big)\to 0\quad\text{strongly in }W^{-1,p}(\Omega;\rl)
\end{eqnarray*}
as $\e\to 0$. Hence, in particular, by Theorem \ref{thm:equivalent-two-scale},
$$\lim_{k\to +\infty}\lim_{\e\to 0}\|T_{\e}v_{\e}^k-(u+w)\|_{L^p(\Omega\times Q;\rd)}+\|\pdeepk{\e}v_{\e}^k\|_{W^{-1,p}(\Omega;\rl)}=0.$$
By Attouch's diagonalization lemma (\cite[Lemma 1.15 and Corollary 1.16]{attouch}), we can extract a subsequence $\{k(\e)\}$ such that
\begin{eqnarray*}
&&v_{\e}^{k(\e)}\sts u+w\quad\text{strongly 2-scale in }L^p(\Omega\times Q;\rd),\\
&&{\mathscr{A}_{k(\e),\e}}^{\rm div}v_{\e}^{k(\e)}\to 0\quad\text{strongly in }W^{-1,p}(\Omega;\rl)
\end{eqnarray*}
as $\e\to 0$. A truncation argument analogous to \cite[Lemma 2.15]{fonseca.muller} yields a $p-$equiintegrable sequence $\{v_{\e}\}$ satisfying
\begin{eqnarray}
\nonumber &&v_{\e}\wk u\quad\text{weakly in }L^p(\Omega;\rd),\\
\label{eq:diag-seq-k}&&{\mathscr{A}_{k(\e),\e}}^{\rm div}v_{\e}\to 0\quad\text{strongly in }W^{-1,q}(\Omega;\rl),
\end{eqnarray}
for every $1\leq q<p$.

To complete the proof, it remains to prove that
\be{eq:claim-all-ok}
\pdeep{\e}v_{\e}\to 0\quad\text{strongly in }W^{-1,q}(\Omega;\rl)\quad\text{for every }1\leq q<p. 
\ee
To this purpose, we first notice that, by \eqref{eq:meas-conv} and by Severini-Egoroff's theorem, there exists a sequence of measurable sets $\{E_n\}\subset Q$ such that
$|E_n|\leq\frac{1}{n}$ and  
\be{eq:unif-conv}A^i_{k(\e)}\to A^i\quad\text{uniformly on }Q\setminus E_n\ee
 for every $n=1,\cdots,+\infty$. Let $\eta>0$ and $1\leq q<p$ be fixed, and for $z\in\mathbb{Z}^N$, set
$$Q_{\e,z}:=\e z+\e Q\quad\text{and }\ze:=\{z\in\mathbb{Z}^N:Q_{\e,z}\cap \Omega\neq \emptyset\}.$$
By the $p$-equiintegrability of $\{v_{\e}\}$ and hence of $\{T_{\ep}v_{\ep}\}$ (see Proposition \ref{prop:conv-unf-op}), and by \eqref{eq:unif-bd}, we can assume that 
\be{eq:neg-set}\|T_{\e}v_{\e}\|_{L^p((\cup_{z\in\ze}\qe\setminus\Omega)\times Q;\rd)}\leq \eta,\ee
and $n$ is such that
\ba{eq:egoroff}
&\sum_{i=1}^N\int_{\Omega}\int_{E_n}|A^i_{k(\e)}(y)-A^i(y)|^q|T_{\e} v_{\e}(x,y)|^q\,dydx\\
\nn&\quad\leq C\sum_{i=1}^N(\|A^i_{k(\e)}\|^q_{L^{\infty}(Q;\M^{l\times d})}+\|A^i\|^q_{L^{\infty}(Q;\M^{l\times d})})\|T_{\e} v_{\e}(x,y)\|^q_{L^{q}(\Omega\times E_n;\rd)}\leq\eta.
\end{align}
We first notice that, by \eqref{eq:def-pde-epsilon}, there holds
\bas
 &\big\|\pdeep{\e}v_{\e}\big\|^q_{W^{-1,q}(\Omega;\rl)}\leq \big\|{\mathscr{A}_{k(\e),\e}}^{\rm div}v_{\e}\big\|^q_{W^{-1,q}(\Omega;\rl)}\\
 &\qquad+\sum_{i=1}^N\int_{\Omega}\Big|A^i_{k(\e)}\Big(\frac{x}{\ep}\Big)-A^i\Big(\frac{x}{\ep}\Big)\Big|^q|v_{\e}(x)|^q\,dx\\
 &\quad\leq \big\|{\mathscr{A}_{k(\e),\e}}^{\rm div}v_{\e}\big\|^q_{W^{-1,q}(\Omega;\rl)}+\sum_{i=1}^N\int_{\cup_{z\in\ze}(\qe\cap\Omega)}\Big|A^i_{k(\e)}\Big(\frac{x}{\ep}\Big)-A^i\Big(\frac{x}{\ep}\Big)\Big|^q|v_{\e}(x)|^q\,dx.
 \end{align*}
 Therefore, by \eqref{eq:diag-seq-k} we deduce 
\ba{eq:first-goes-zero}
  &\limsup_{\e\to 0}\big\|\pdeep{\e}v_{\e}\big\|^q_{W^{-1,q}(\Omega;\rl)}\\
  \nn&\quad\leq\limsup_{\ep\to 0}\sum_{i=1}^N\int_{\cup_{z\in\ze}(\qe\cap\Omega)}\Big|A^i_{k(\e)}\Big(\frac{x}{\ep}\Big)-A^i\Big(\frac{x}{\ep}\Big)\Big|^q|v_{\e}(x)|^q\,dx.
\end{align}
Changing variables, using the periodicity of the operators, and extending $v_{\e}$ to zero outside $\Omega$, the right-hand side of \eqref{eq:first-goes-zero} can be estimated as
 \ba{eq:second-goes-zero}
 &\sum_{i=1}^N\int_{\cup_{z\in\ze}(\qe\cap\Omega)}\Big|A^i_{k(\e)}\Big(\frac{x}{\ep}\Big)-A^i\Big(\frac{x}{\ep}\Big)\Big|^q|v_{\e}(x)|^q\,dx\\
\nn&\quad \leq
 \e^N\sum_{i=1}^N\sum_{z\in\ze}\iQ|A^i_{k(\e)}(y)-A^i(y)|^q|v_{\e}(\e z+\e y)|^q\,dy\\
 \nn&\quad= \sum_{i=1}^N\sum_{z\in\ze}\int_{\qe}\iQ|A^i_{k(\e)}(y)-A^i(y)|^q\Big|v_{\e}\Big(\e \floor[\Big]{\frac{x}{\e}}+\e y\Big)\Big|^q\,dydx\\
 \nn&\quad=\sum_{i=1}^N\int_{\cup_{z\in\ze}\qe}\iQ|A^i_{k(\e)}(y)-A^i(y)|^q|T_{\e} v_{\e}(x,y)|^q\,dydx.
 \end{align}
 By Proposition \ref{prop:isometry}, \eqref{eq:neg-set}, and by \eqref{eq:egoroff}, the right-hand side of \eqref{eq:second-goes-zero} is bounded from above as follows
 \ba{eq:third-goes-zero}
 &\sum_{i=1}^N\int_{\cup_{z\in\ze}\qe}\iQ|A^i_{k(\e)}(y)-A^i(y)|^q|T_{\e} v_{\e}(x,y)|^q\,dydx\\
 \nn&\quad\leq\sum_{i=1}^N\int_{\cup_{z\in\ze}\qe\setminus\Omega}\iQ|A^i_{k(\e)}(y)-A^i(y)|^q|T_{\e} v_{\e}(x,y)|^q\,dydx\\
\nn&\qquad+\sum_{i=1}^N\int_{\Omega}\int_{E_n}|A^i_{k(\e)}(y)-A^i(y)|^q|T_{\e} v_{\e}(x,y)|^q\,dydx\\
 \nn&\qquad+\sum_{i=1}^N\int_{\Omega}\int_{Q\setminus E_n}|A^i_{k(\e)}(y)-A^i(y)|^q|T_{\e} v_{\e}(x,y)|^q\,dydx\\
 \nn&\leq \eta+C\sum_{i=1}^N(\|A^i_{k(\e)}\|^q_{L^{\infty}(Q;\M^{l\times d})}+\|A^i\|^q_{L^{\infty}(Q;\M^{l\times d})})\|T_{\e}v_{\e}\|^q_{L^q((\cup_{z\in\ze}\qe\setminus\Omega)\times Q;\R^d)}\\
 \nn&\qquad+\sum_{i=1}^N\|A^i_{k(\e)}-A^i\|^q_{L^{\infty}(Q\setminus E_n;\M^{l\times d})}\|v_{\e}\|^q_{L^q(\Omega;\rd)}\\
\nn& \leq C\eta+C\sum_{i=1}^N\|A^i_{k(\e)}-A^i\|_{L^{\infty}(Q\setminus E_n;\M^{l\times d})}.
 \end{align}
 Finally, by \eqref{eq:unif-conv} and collecting \eqref{eq:first-goes-zero}--\eqref{eq:third-goes-zero}, we obtain
 $$\limsup_{\e\to 0}\big\|\pdeep{\ep}v_{\e}\big\|^q_{W^{-1,q}(\Omega;\rl)}\leq C\eta+\lim_{\e\to 0}C\sum_{i=1}^N\|A^i_{k(\e)}-A^i\|_{L^{\infty}(Q\setminus E_n;\M^{l\times d})}=C\eta$$
 for every $\eta>0$. By the arbitrariness of $\eta$ we conclude \eqref{eq:claim-all-ok}.
\end{proof}
\begin{proof}[Proof of Theorem \ref{thm:main-homogenized-pde}]
We first notice that by Remark \ref{rk:not-so-easy} the thesis is trivial if $u\notin \C$. By Proposition \ref{thm:liminf} and Remark \ref{rk:even-better},
 for every $u\in\C$ there holds
\bas
&\inf\Big\{\liminf_{\ep\to 0}\iO f(u_{\ep}(x))\,dx:\,u_{\ep}\wk u\quad\text{weakly in }L^p(\Omega;\rd),\\
 &\quad\text{and }\pdeep{\e}u_{\e}\to 0\quad\text{strongly in }W^{-1,q}(\Omega;\rl)\text{ for every }1\leq q<p\Big\}\geq \F(u).
 \end{align*}
 To complete the proof of the theorem, since
 \bas
 &\inf\Big\{\limsup_{\ep\to 0}\iO f(u_{\ep}(x))\,dx:\,u_{\ep}\wk u\quad\text{weakly in }L^p(\Omega;\rd),\\
 &\qquad\text{and }\pdeep{\e}u_{\e}\to 0\quad\text{strongly in }W^{-1,q}(\Omega;\rl)\text{ for every }1\leq q<p\Big\}\\
 &\quad\leq\inf\Big\{\limsup_{\ep\to 0}\iO f(u_{\ep}(x))\,dx:\,u_{\ep}\wk u\quad\text{weakly in }L^p(\Omega;\rd),\,\{u_{\ep}\}\text{ p-equiintegrable},\\
 &\qquad\text{and }\pdeep{\e}u_{\e}\to 0\quad\text{strongly in }W^{-1,q}(\Omega;\rl)\text{ for every }1\leq q<p\Big\},
 \end{align*}
 it suffices to show that
 \begin{multline}
 \label{eq:limsup-not-smooth}\inf\Big\{\limsup_{\ep\to 0}\iO f(u_{\ep}(x))\,dx:\,u_{\ep}\wk u\quad\text{weakly in }L^p(\Omega;\rd),\,\{u_{\ep}\}\text{ p-equiintegrable},\\
 \text{and }\pdeep{\e}u_{\e}\to 0\quad\text{strongly in }W^{-1,q}(\Omega;\rl)\text{ for every }1\leq q<p\Big\}\leq\F(u).
 \end{multline}
 
 To prove \eqref{eq:limsup-not-smooth}, we argue by approximation. Let $\{\pdeor_k\}$ be the sequence of operators constructed in \eqref{eq:operators-k} and satisfying \eqref{eq:meas-conv}--\eqref{eq:A-invertibility-bdd}.
 We first prove that for $n$, $u_n\in\mathcal{C}^{\pdeor(n\cdot)}$ and $w_n\in{\mathcal{C}}_{u_n}^{\pdeor(n\cdot)}$ fixed,
  \ba{eq:limsup-second-not-smooth}
 &\inf\Big\{\liminf_{k\to +\infty}\mathcal{F}_{\pdeor_k(n\cdot)}^{2\|w_n\|_{L^p(\Omega\times Q;\R^d)}}(u_n+\phi_k):\,\phi_k\to 0\quad\text{strongly in }L^p(\Omega;\rd),\\
 \nn&\qquad u_n+\phi_k\in{\mathcal{C}}^{\pdeor_k(n\cdot)}\text{ for every }k\Big\}\\
 \nn&\quad\leq\iOQ f(u_n(x)+w_n(x,y))\,dy\,dx.
 \end{align}
Indeed, by Lemma \ref{lemma:approximation} there exist sequences $\{u^k\}\subset L^p(\Omega;\rd)$ and $\{w^k\}\subset L^p(\Omega\times Q;\rd)$, such that $u^k\in{\mathcal{C}}^{\pdeor_k(n\cdot)}$, $w^k\in{\mathcal{C}}^{\pdeor_k(n\cdot)}_{u^k}$ for every $k$, 
$$u^k\to u_n\quad\text{strongly in }L^p(\Omega;\rd),$$
and
\be{eq:ns-w-k}w^k\to w_n\quad\text{strongly in }L^p(\Omega\times Q;\rd).\ee
For $k$ big enough,
$$\mathcal{F}_{\pdeor_k(n\cdot)}^{2\|w_n\|_{L^p(\Omega\times Q;\R^d)}}(u_n+(u^k-u_n))\leq\iOQ f(u^k(x)+w^k(x,y))\,dy\,dx\quad\text{for every }k,$$
and
$$\limsup_{k\to +\infty}\mathcal{F}_{\pdeor_k(n\cdot)}^{2\|w_n\|_{L^p(\Omega\times Q;\R^d)}}(u_n+(u^k(x)-u_n(x)))\leq \iOQ f(u_n(x)+w_n(x,y))\,dy\,dx,$$
which in turn implies \eqref{eq:limsup-second-not-smooth}.

Let now $u\in \C$ and $\eta>0$ be fixed. Then, there exist $r_{\eta}>0$ such that
\bas
\F(u)+\eta\geq\inf\Big\{\liminfn \FBN^{r_{\eta}}(u_n):\,u_n\wk u\quad\text{weakly in }L^p(\Omega;\R^d)\Big\},
\end{align*}
and sequences $\{u^{\eta}_n\}\in L^p(\Omega;\R^d)$, $\{w^{\eta}_n\}\in L^p(\Omega\times Q;\R^d)$ satisfying
\ba{eq:u-n-eta}
&u^{\eta}_n\wk u\quad\text{weakly in }L^p(\Omega;\R^d),\\
\nn&w^{\eta}_n\in {\mathcal{C}}^{\pdeor(n\cdot)}_{u^{\eta}_n}\quad\text{for every }n,\\
\nn&\|w^{\eta}_n\|_{L^p(\Omega\times Q;\R^d)}\leq r_{\eta}\quad\text{for every }n,
\end{align}
and
$$\F(u)+2\eta\geq \lim_{n\to +\infty}\iOQ f(u^{\eta}_n(x)+w^{\eta}_n(x,y))\,dy\,dx.$$
In particular, by \eqref{eq:limsup-second-not-smooth},
\bas
&\F(u)+2\eta\\ 
&\quad\geq\limsup_{n\to +\infty}\inf\Big\{\liminf_{k\to +\infty}\mathcal{F}_{\pdeor_k(n\cdot)}^{2\|w_n^{\eta}\|_{L^p(\Omega\times Q;\R^d)}}(u^{\eta}_n+\phi_k):\,\phi_k\to 0\quad\text{strongly in }L^p(\Omega;\rd),\\
 \nn&\qquad u^{\eta}_n+\phi_k\in{\mathcal{C}}^{\pdeor_k(n\cdot)}\text{ for every }k\Big\}.
\end{align*}
 For every $n,k$, let $w_{n,k}^{\eta}\in {\mathcal{C}}^{\pdeor_k(n\cdot)}_{u^{\eta}_n+\phi_k}$ be such that 
 \be{eq:w-n-k-eta}\|w_{n,k}^{\eta}\|_{L^p(\Omega\times Q;\R^d)}\leq 2\|w_{n}^{\eta}\|_{L^p(\Omega\times Q;\R^d)}\ee
 and
 \bas
 &\mathcal{F}_{\pdeor_k(n\cdot)}^{2\|w_{n}^{\eta}\|_{L^p(\Omega\times Q;\R^d)}}(u^{\eta}_n+\phi_k)\leq \iOQ f(u^{\eta}_n(x)+\phi_k(x)+w_{n,k}^{\eta}(x,y))\,dy\,dx\\
 &\quad \leq\mathcal{F}_{\pdeor_k(n\cdot)}^{2\|w_{n}^{\eta}\|_{L^p(\Omega\times Q;\R^d)}}(u^{\eta}_n+\phi_k)+\frac1k.
 \end{align*}
 Arguing as in Steps 1 and 2 of Proposition \ref{thm:limsup}, for every $n,k$ we construct a sequence $\{v_{\ep,n,k}^{\eta}(x)\}\subset L^p(\Omega;\R^d)$ such that
 \ba{eq:v-ep-n-k}
 &\|v_{\e,n,k}^{\eta}\|_{L^p(\Omega;\R^d)}\leq C\|u^{\eta}_n+\phi_k+w_{n,k}^{\eta}\|_{L^p(\Omega\times Q;\R^d)},\\
 \nn&v_{\e,n,k}^{\eta}\wk u_n^{\eta}+\phi_k\quad\text{weakly in }L^p(\Omega;\R^d),\\
 \nn&\pdeepk{\e}v_{\e,n,k}^{\eta}\to 0\quad\text{strongly in }W^{-1,p}(\Omega;\rl),\\
 \nn&\int_{\Omega}f(v_{\e,n,k}^{\eta}(x))\,dx\to\iOQ f(u^{\eta}_n(x)+\phi_k(x)+w_{n,k}^{\eta}(x,y))\,dy\,dx,
 \end{align}
 as $\ep\to 0$, where the constant $C$ is independent of $\ep$, $n$ and $k$. In particular, \eqref{eq:u-n-eta} and \eqref{eq:w-n-k-eta} yield that the sequence $\{v_{\e,n,k}^{\eta}\}$ is uniformly bounded in $L^p(\Omega;\R^d)$. By \eqref{eq:v-ep-n-k} the metrizability of bounded sets in the weak $L^p$ topology and Attouch's diagonalization lemma (\cite[Lemma 1.15 and Corollary 1.16]{attouch}), we can extract subsequences $\{n(\e)\}$ and $\{k(\e)\}$ such that, setting
 $$\tilde{u}_{\e}:=v_{\e,n(\e),k(\e)}^{\eta},$$
 there holds
 \bas
 &\tilde{u}_{\e}\wk u\quad\text{weakly in }L^p(\Omega;\R^d),\\
 &\pdeor_{k(\e)}^{\rm div}\tilde{u}_{\e}\to 0\quad\text{strongly in }W^{-1,p}(\Omega;\rl),\\
 &\limsup_{\e\to 0}\int_{\Omega}f(\tilde{u}_{\e}(x))\,dx\leq \F(u)+2\eta.
 \end{align*}
 In view of Lemma \ref{lemma:truncation} and Proposition \ref{prop:f-k}, we can construct a further sequence $\{u_{\e}\}\subset L^p(\Omega;\rd)$, $p-$equiintegrable and satisfying
 \bas
 &{u}_{\e}\wk u\quad\text{weakly in }L^p(\Omega;\R^d),\\
 &\pdeor_{k(\e)}^{\rm div}{u}_{\e}\to 0\quad\text{strongly in }W^{-1,q}(\Omega;\rl)\quad\text{for every }1\leq q<p,\\
 &\limsup_{\e\to 0}\int_{\Omega}f({u}_{\e}(x))\,dx\leq\limsup_{\e\to 0}\int_{\Omega}f(\tilde{u}_{\e}(x))\,dx\leq \F(u)+2\eta.
 \end{align*}
 Property \eqref{eq:limsup-not-smooth}
 follows now by noticing that exactly the same argument as in the proof of \eqref{eq:claim-all-ok} yields
 $$\pdeep{\ep}u_{\e}\to 0\quad\text{strongly in }W^{-1,q}(\Omega;\rl)\quad\text{for every }1\leq q<p,$$
 and by the arbitrariness of $\eta$.
 \end{proof}
\subsection{The case of constant coefficients}
\label{subsection:general}
In this subsection we show that the homogenized energy obtained in Theorem \ref{thm:main-homogenized-pde} reduces to the one identified by Braides, Fonseca and Leoni in \cite{braides.fonseca.leoni}, in the case in which the operators $A^i$, $i=1,\cdots, N$, are constant, and the constant rank condition \eqref{cr} is satisfied by the differential operator $\pdeor:L^p(\Omega;\R^d)\to W^{-1,p}(\Omega;\rl)$ defined as
$$\pdeor u:=\sum_{i=1}^N A^i \frac{\partial u}{\partial x_i}\quad\text{for every }u\in L^p(\Omega;\R^d).$$

In this case the classes $\C_{u}$  and $\C$ defined in \eqref{eq:limit-test-functions} and \eqref{eq:limit-domain} become, respectively
$$\cc^{{\rm const}}_{u}:=\Big\{w\in L^p(\Omega;L^p_{\rm per}(\Rn;\rd)):\,\iQ w(x,y)\,dy=0\text{ and }\sum_{i=1}^N A^i \frac{\partial w}{\partial y_i}(x,y)=0\Big\}$$
and
$$\cc^{\rm const}:=\Big\{u\in L^{p}(\Omega;\rd):\, \pdeor u=0\Big\}.$$
Recall that
$$\F(u)=\inf_{r>0}\inf\Big\{\liminfn\FB^{\,r}(u_n):\,u_n\wk u\quad\text{weakly in }L^p(\Omega;\R^d)\Big\}.$$

By definition of $\pdeor-$quasiconvex envelope, for every $u\in \cc^{\rm const}$ and $r>0$ we have
$$\FF^{\,r}(u)\geq \iO \qa f(u(x))\,dx.$$
 By \cite[Theorem 3.7]{fonseca.muller}, the $\pdeor-$quasiconvex envelope is lower semicontinuous with respect to the weak $L^p$ convergence of $\pdeor-$vanishing maps, hence
 $$\inf\Big\{\liminfn\FB^{\,r}(u_n):\,u_n\wk u\quad\text{weakly in }L^p(\Omega;\R^d)\Big\}\geq \iO\qa f(u(x))\,dx$$
 for every $r>0$, which yields
\be{eq:one-side}
\F(u)\geq \iO\qa f(u(x))\,dx.
\ee

Conversely, since $\pdeor$ is a differential operator with constant coefficients, for every $u\in\cc^{{\rm const}}$ the null map belongs to $\cc^{{\rm const}}_u$. Hence,
$$\iO f(u(x))\,dx\geq \FF^{\,r}(u)=\FB^{\,r}(u).$$
By taking the lower semicontinuous envelope of both sides with respect to the weak $L^p$ convergence of $\pdeor-$vanishing maps we obtain (see \cite[Theorem 1.1]{braides.fonseca.leoni})
\ba{eq:second-side}
\iO \qa(f(u(x)))\,dx&\geq \inf\Big\{\liminfn\FB^{\,r}(u_n):\,u_n\wk u\quad\text{weakly in }L^p(\Omega;\R^d)\Big\}\\
\nonumber&\geq \F(u).
\end{align}
Combining \eqref{eq:one-side} and \eqref{eq:second-side} we deduce that
$$\F(u)=\iO \qa f(u(x))\,dx.$$

\subsection{Nonlocality of the operator}
\label{subsection:nonlocal}
We end this section with an example that illustrates that, in general, when the operators $A^i$ are not constant then the functional $\F$ in \eqref{eq:def-lsc-energy} can be nonlocal, even when the energy density $f$ is convex.
\begin{example}
\label{example:nonlocal}
Let $N=d=p=2$ and $l=1$, and choose 
$$\Omega=(0,1)\times (0,1).$$
 Let $a\in C^{\infty}_{\rm per}(\R)$, with period $(-\tfrac12,\tfrac12)$, $a>-1$, and satisfying
\be{eq:after-ex}
\int_{-\tfrac12}^{\tfrac12}a(s)\,ds=0\quad\text{and}\quad \int_{-\tfrac12}^{\tfrac12}a^2(s)\,ds=1,
\ee
 and consider the operators $A^1,A^2:\R^2\to\R^2$ defined as 
$$A^1(x):=(1+a(x_2)\quad 0)\quad\text{and }A^2(x):=(0\quad 1)$$
for $x\in\R^2$. We have that
$$\aaa(y)\xi=\Bigg(\begin{array}{cc}1+a(y_2)&0\\0&1\end{array}\Bigg)\xi$$
for $\xi\in\R^2$ and $y\in Q$, therefore the operator $\aaa$ satisfies the uniform invertibility assumption \eqref{eq:A-invertibility}.

Consider the function $f:\R^2\to [0,+\infty)$, defined as $f(\xi):=|\xi|^2$ for every $\xi\in\R^2$. 
By \eqref{eq:after-ex}, for $n$ fixed, $u_n\in{\mathcal{C}^{\pdeor(n\cdot)}}$ if and only if there exists $w_n\in L^2(\Omega\times Q;\R^2)$ such that $\iQ w_n(x,y)\,dy=0$, and the following two conditions are satisfied
\be{eq:pde-ex-1}\frac{\partial}{\partial x_1}\Big(\iQ a(ny_2)w_{1,n}(x,y)\,dy\Big)+{\rm div}\,u_n(x)=0\quad\text{in }W^{-1,2}(\Omega),\ee
\be{eq:pde-ex-2}(1+a(ny_2))\frac{\partial w_{1,n}(x,y)}{\partial y_1}+\frac{\partial w_{2,n}(x,y)}{\partial y_2}=0\quad\text{in }W^{-1,2}(Q)\quad\text{for a.e. }x\in\Omega.\ee

Moreover, for every $\tilde{u}\in L^2(\Omega;\R^d)$ and $\tilde{w}\in L^2(\Omega\times Q;\R^2)$ with $\iQ \tilde{w}(x,y)\,dy=0$, there holds
\be{eq:ort-ex}\iOQ f(\tilde{u}(x)+\tilde{w}(x,y))\,dy=\iO|\tilde{u}(x)|^2\,dx+\iOQ |\tilde{w}(x,y)|^2\,dy\,dx.\ee
In view of \eqref{eq:ort-ex}, for every $r>0$, $n\in\N$, and $u_n\in {\mathcal{C}^{\pdeor(n\cdot)}}$, 
$$\FBN^{\,r}(u_n)\geq \iO|u_n(x)|^2\,dx,$$
hence by classical lower-semicontinuity results (see,e.g.,\cite[Theorem 5.14]{fonseca.leoni}),
\be{eq:05star}\F(u)\geq \iO|u(x)|^2\,dx\quad\text{for every }u\in\C.\ee

If $0\in\C_u$, i.e., ${\rm div}\,u=0$ (and hence $0\in\mathcal{C}_u^{\pdeor(n\cdot)}$ for every $n$), then 
$$\FBN^{\,r}(u)=\iO |u(x)|^2dx$$ for every $n$ and $r$, and choosing $u_n=u$ for every $n$ in \eqref{eq:def-lsc-energy}, we deduce that \eqref{eq:05star} holds with equality.
 
Strict inequality holds in \eqref{eq:05star} if $u\in\C$ but ${\rm div}\,u\neq 0$. Such fields exist, consider for example, 
 $$u(x):=\Big(\begin{array}{c}- x_1\\0\end{array}\Big)\quad\text{for }x\in\Omega.$$ Note that the map 
 $$w(x,y):=\Big(\begin{array}{c}a(y_2)x_1\\0\end{array}\Big)\quad\text{for every }(x,y)\in\Omega\times Q$$
 satisfies $w\in\C_u$ by \eqref{eq:after-ex}
 
 Assume by contradiction that there exists $u\in\C$, with ${\rm div}\,u\neq 0$, such that
 \be{eq:equalFalse}
 \F(u)=\int_{\Omega}|u(x)|^2\,dx.
 \ee
 By the definition of $\F$ there exists a sequence $\{r_m\}$ of real numbers such that
 $$\F(u)=\lim_{m\to +\infty}\inf\Big\{\liminfn \FBN^{\,r_m}(u_n):\,u_n\wk u\quad\text{weakly in }L^p(\Omega;\R^d)\Big\}.$$
 For every $m\in\N$, let $\{u_n^m\}\subset L^2(\Omega;\R^d)$ be such that 
 $$u_n^m\wk u\quad\text{weakly in }L^2(\Omega;\R^d)$$
 as $n\to +\infty$, and
 $$\inf\Big\{\liminfn \FBN^{\,r_m}(u_n):\,u_n\wk u\quad\text{weakly in }L^2(\Omega;\R^d)\Big\}+\frac1m\geq \liminf_{n\to +\infty} \FBN^{\,r_m}(u_n^m).$$
 Then
 $$\F(u)=\lim_{m\to +\infty}\liminf_{n\to +\infty} \FBN^{\,r_m}(u_n^m),$$
 and by a diagonal argument we can extract a subsequence $\{n(m)\}$ such that, setting $u^m:=u_{n(m)}^m$,
 \be{eq:diag-seq-ex}\F(u)=\lim_{m\to +\infty}\overline{\mathcal{F}}_{\pdeor(n(m)\cdot)}^{\,r_m}(u^m).\ee
 By \eqref{eq:ort-ex}, for every $m$ there exists $w_m\in {\mathcal C}^{\pdeor(n(m)\cdot)}_{u^m}$ such that $\|w_m\|_{L^2(\Omega\times Q;\R^d)}\leq r_{m}$ and
  \ba{eq:08star}
  \overline{\mathcal{F}}_{\pdeor(n(m)\cdot)}^{\,r_m}(u^m)+\frac 1m&\geq \iOQ f(u^m(x)+w_m(x,y))\,dy\,dx\\
  &\nonumber=\iO|u^m(x)|^2\,dx+\iOQ |w_m(x,y)|^2\,dy\,dx.
  \end{align}
 Hence, in view of \eqref{eq:equalFalse}, \eqref{eq:diag-seq-ex} and \eqref{eq:08star},
  $$\iO|u(x)|^2\,dx=\lim_{m\to +\infty}\Big(\iO|u^m(x)|^2\,dx+\iOQ |w_m(x,y)|^2\,dy\,dx\Big),$$
  and so
  \be{eq:um-ex}u^m\to u\quad\text{strongly in }L^2(\Omega;\R^d),\ee
  and
  \be{eq:wm-ex}w_m\to 0\quad\text{strongly in }L^2(\Omega\times Q;\R^d).\ee
  By \eqref{eq:pde-ex-1} and the boundedness of the function $a$, properties \eqref{eq:um-ex} and \eqref{eq:wm-ex} yield
  $${\rm div }\,u^m\to 0\quad\text{strongly in }W^{-1,2}(\Omega)$$
  which in turn implies that ${\rm div}\,u=0$, contradicting the assumptions on $u$.
  
  We conclude that if $u\in \C$ satisfies ${\rm div}\,u=0$, then 
  \be{eq:becomes-simple}\F(u)=\int_{\Omega}|u(x)|^2\,dx,
  \ee 
  whereas if $u\in \C$ satisfies ${\rm div}\,u\neq 0$, then 
  \be{eq:becomes-not-simple}\F(u)>\int_{\Omega}|u(x)|^2\,dx.
  \ee

We now provide an explicit expression for the functional $\F$. We claim that for every $u\in\C$ there holds
\be{eq:ss-ex-ex}
\F(u)=\int_{\Omega}|u(x)|^2\,dx+\int_{\Omega}|\phi_u^\Omega(x)|^2\,dx,
\ee
where $\phi_u^\Omega\in L^2(\Omega)$ is the unique function satisfying $\int_0^1\phi_u^\Omega(x_1,x_2)\,dx_1=0$ for a.e. $x_2\in (0,1)$, and $\frac{\partial \phi_u^\Omega(x)}{\partial x_1}=-{\rm div}\,u(x)$ in $W^{-1,2}(\Omega)$.

To prove \eqref{eq:ss-ex-ex} we first establish a preliminary lemma.
\begin{lemma}
\label{lemma:ident-part-ex-ex}
Let $n\in \N$, and let $v\in \mathcal{C}^{\pdeor(n\cdot)}$. Then
\ba{eq:ident-ex-ex}
&\inf_{w\in \mathcal{C}_{\pdeor(n\cdot)}(v)}\Big[\int_{\Omega}|v(x)|^2\,dx+\iOQ |w(x,y)|^2\,dx\,dy\Big]\\
&\nn\quad=\int_{\Omega}|v(x)|^2\,dx+\int_{\Omega}|\phi_v^\Omega(x)|^2\,dx,\end{align}
where $\int_0^1\phi_v^\Omega(x_1,x_2)\,dx_1=0$ for a.e. $x_2\in(0,1)$, and $\frac{\partial\phi_v^\Omega(x)}{\partial x_1}=-{\rm div}\,v(x)$ in $W^{-1,2}(\Omega)$.
\end{lemma}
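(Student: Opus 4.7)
The plan is to reduce the minimization over $w$ to an explicit one-dimensional computation exploiting the special structure of $A^1$ and $A^2$. Since $\int_\Omega|v(x)|^2\,dx$ is independent of $w$, it suffices to prove that $\inf_w \iOQ|w|^2\,dy\,dx=\int_\Omega|\phi_v^\Omega|^2\,dx$. For the upper bound I would test with the explicit competitor $w_1(x,y):=a(ny_2)\,\phi_v^\Omega(x)$ and $w_2(x,y):=0$. From \eqref{eq:after-ex} combined with periodicity one has $\iQ a(ny_2)\,dy=0$ and $\iQ a^2(ny_2)\,dy=1$, hence $\iQ w(x,y)\,dy=0$. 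Since $\partial_{y_1}w_1=0$ and $\partial_{y_2}w_2=0$, constraint \eqref{eq:pde-ex-2} is automatic; moreover $\iQ a(ny_2)w_1(x,y)\,dy=\phi_v^\Omega(x)$, so \eqref{eq:pde-ex-1} reduces to $\partial_{x_1}\phi_v^\Omega+{\rm div}\,v=0$, which is precisely the defining property of $\phi_v^\Omega$. A direct computation then gives $\iOQ|w|^2\,dy\,dx=\int_\Omega|\phi_v^\Omega|^2\,dx$.

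For the matching lower bound, I would take an arbitrary admissible $w$ and set $W_1(x):=\iQ a(ny_2)w_1(x,y)\,dy$. Cauchy-Schwarz in $y$ combined with $\iQ a^2(ny_2)\,dy=1$ gives $|W_1(x)|^2\leq\iQ|w_1(x,y)|^2\,dy$ for a.e.\ $x\in\Omega$, while constraint \eqref{eq:pde-ex-1} rewrites as $\partial_{x_1}W_1=-{\rm div}\,v=\partial_{x_1}\phi_v^\Omega$ in $W^{-1,2}(\Omega)$. Hence $W_1-\phi_v^\Omega\in L^2(\Omega)$ has vanishing $x_1$-derivative on the rectangle $(0,1)^2$, and the standard one-dimensional argument forces $W_1(x_1,x_2)=\phi_v^\Omega(x_1,x_2)+\psi(x_2)$ for some $\psi\in L^2(0,1)$. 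The $x_1$-mean-zero property of $\phi_v^\Omega$ then yields
\[
\int_0^1|W_1(x_1,x_2)|^2\,dx_1=\int_0^1|\phi_v^\Omega(x_1,x_2)|^2\,dx_1+\psi(x_2)^2\geq\int_0^1|\phi_v^\Omega(x_1,x_2)|^2\,dx_1,
\]
and integrating in $x_2$ and discarding the nonnegative $\iOQ|w_2|^2\,dy\,dx$ contribution gives $\iOQ|w|^2\,dy\,dx\geq\int_\Omega|\phi_v^\Omega|^2\,dx$, matching the upper bound.

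The only real subtlety I expect is invoking the elementary distributional fact that an $L^2$ function on $(0,1)^2$ with vanishing $x_1$-derivative is independent of $x_1$; beyond this the argument is a clean equality case of the $a$-weighted Cauchy-Schwarz coupled with the normalizations in \eqref{eq:after-ex}. Note that constraint \eqref{eq:pde-ex-2} plays no role in the lower bound and is trivially satisfied by the upper-bound competitor, reflecting the fact that the minimizer decouples completely in the $y_1$ variable.
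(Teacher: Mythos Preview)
Your proof is correct and essentially identical to the paper's: the same competitor $(a(ny_2)\phi_v^\Omega,0)$ gives the upper bound, and for the lower bound the paper's orthogonal decomposition $w_1=a(ny_2)W_1+\eta_w$ with $\int_Q a\,\eta_w\,dy=0$ is precisely your Cauchy--Schwarz step, followed by the same $x_1$-mean-zero argument to pass from $\int_\Omega|W_1|^2$ to $\int_\Omega|\phi_v^\Omega|^2$. The only cosmetic difference is that the paper phrases the first inequality as a Pythagorean identity rather than Cauchy--Schwarz.
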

\begin{proof}
We recall that by \eqref{eq:pde-ex-1} and \eqref{eq:pde-ex-2}, $w\in {\mathcal{C}}_{\pdeor(n\cdot)}(v)$ if and only if 
\begin{align}
&\nonumber\iQ w(x,y)\,dy=0,\quad\text{for a.e. }x\in\Omega,\\ 
&\label{eq:n1-ex-ex}\frac{\partial}{\partial x_1}\Big(\iQ a(ny_2)w_{1}(x,y)\,dy\Big)+{\rm div}\,v(x)=0\quad\text{in }W^{-1,2}(\Omega),\\
&\nn(1+a(ny_2))\frac{\partial w_{1}(x,y)}{\partial y_1}+\frac{\partial w_{2}(x,y)}{\partial y_2}=0\quad\text{in }W^{-1,2}(Q)\quad\text{for a.e. }x\in\Omega.
\end{align}
By \eqref{eq:n1-ex-ex}, the map $\phi_v^\Omega$ defined in the statement of Lemma \ref{lemma:ident-part-ex-ex} satisfies
$$\phi_v^\Omega(x)=\iQ a(ny_2)w_{1}(x,y)\,dy-\int_0^1\Big(\iQ a(ny_2)w_{1}(x,y)\,dy\Big)\,dx_1\quad\text{for a.e. }x\in\Omega.$$
Rewriting $w_1$ as
$$w_1(x,y):=a(ny_2)\iQ a(ny_2)w_{1}(x,y)\,dy+\eta_w(x,y),$$
we have
$$\iQ a(ny_2)\eta_w(x,y)\,dy=0,$$
and hence
\begin{align*}
&\iOQ |w(x,y)|^2\,dx\,dy=\iOQ |w_1(x,y)|^2\,dx\,dy+\iOQ |w_2(x,y)|^2\,dx\,dy\\
&\quad \geq\int_{\Omega}\Big(\iQ a(ny_2)w_{1}(x,y)\,dy\Big)^2\,dx+\iOQ |\eta_w(x,y)|^2\,dx\,dy\\
&\quad \geq \int_{\Omega}|\phi_v^\Omega(x)|^2\,dx+\int_{\Omega}\Big(\int_0^1\Big(\iQ a(ny_2)w_{1}(x,y)\,dy\Big)\,dx_1\Big)^2\,dx\\
&\quad\geq \iO |\phi_v^\Omega(x)|^2\,dx\quad\text{for every }w\in{\mathcal{C}}_{\pdeor(n\cdot)}(v).
\end{align*}
In particular, we obtain the lower bound
$$\inf_{w\in \mathcal{C}_{\pdeor(n\cdot)}(v)}\Big[\int_{\Omega}|v(x)|^2\,dx+\iOQ |w(x,y)|^2\,dx\,dy\Big]\geq\int_{\Omega}|v(x)|^2\,dx+\iO |\phi_v^\Omega(x)|^2\,dx.$$
Property \eqref{eq:ident-ex-ex} follows by observing that \eqref{eq:after-ex} yields $$\big(a(ny_2)\phi_v^\Omega(x),0\big)\in {\mathcal{C}_{\pdeor(n\cdot)}(v)}.$$
\end{proof}
Let $u\in\C$. Arguing as in the proof of \eqref{eq:becomes-not-simple} there exist a sequence $\{n_m\}\subset\N$, with $n_{m}\to +\infty$ as $m\to+\infty$, and sequences $\{u^m\}\subset L^2(\Omega;\R^2)$ and $\{w^m\}\subset L^2(\Omega\times Q;\R^2)$, such that
\begin{align}
&\label{eq:1-ex-ex-lim}u^m\wk u\quad\text{weakly in }L^2(\Omega;\R^2),\\
&\nn w^m\in {\mathcal C}^{\pdeor(n_m\cdot)}_{u^m},
\end{align}
and
$$\F(u)=\lim_{m\to +\infty}\Bigg\{\int_{\Omega}|u^m(x)|^2\,dx+\iOQ |w^m(x,y)|^2\,dx\,dy\Bigg\}.$$
In view of Lemma \ref{lemma:ident-part-ex-ex}, 
\be{eq:limit-attained-ex-ex}
\F(u)\geq \limsup_{m\to +\infty}\Bigg\{\int_{\Omega}|u^m(x)|^2\,dx+\int_{\Omega} |\phi^m(x)|^2\,dx\Bigg\},
\ee
where 
\be{eq:prop-phi-n-m-ex1}\int_{0}^1\phi^m(x_1,x_2)\,dx_1=0,\quad\text{for a.e. }x_2\in(0,1),\ee
and 
\be{eq:prop-phi-n-m-ex2}\frac{\partial \phi^m(x)}{\partial x_1}=-{\rm div}\, u_n^m(x)\quad\text{in }W^{-1,2}(\Omega).\ee
 Since $u\in \C$, there holds $\F(u)<+\infty$, and the sequence $\{\phi^m\}$ is uniformly bounded in $L^2(\Omega)$. Therefore there exists $\phi_u^\Omega\in L^2(\Omega)$ such that, up to the extraction of a (not relabeled) subsequence,
\be{eq:wk-conv-phi-ex-ex}\phi^m\wk\phi_u^\Omega\quad\text{weakly in }L^2(\Omega),\ee
where, by \eqref{eq:1-ex-ex-lim}, \eqref{eq:prop-phi-n-m-ex1} and \eqref{eq:prop-phi-n-m-ex2}, 
$$\int_{-\tfrac12}^{\tfrac12}\phi_u^\Omega(x_1,x_2)\,dx_1=0\quad\text{for a.e. }x_2\in(0,1)$$
and 
$$\frac{\partial \phi_u^\Omega(x)}{\partial x_1}=-{\rm div}\,u(x)\quad\text{in }W^{-1,2}(\Omega).$$
In particular, by \eqref{eq:1-ex-ex-lim} and the lower semicontinuity of the $L^2$-norm,
$$\F(u)\geq \int_{\Omega}|u(x)|^2\,dx+\int_{\Omega}|\phi_u^\Omega(x)|^2\,dx.$$
To prove the opposite inequality, choose $u_n:=u$, $w_n:=\big(a(ny_2)\phi_u^\Omega(x),0\big)$ for every $n\in\N$. By Lemma \ref{lemma:ident-part-ex-ex}, for $r$ big enough there holds
$$\overline{\mathcal F}^r_{\pdeor(n\cdot)}(u_n)=\int_{\Omega}|u(x)|^2\,dx+\int_{\Omega}|\phi_u^\Omega(x)|^2\,dx.$$
The characterization \eqref{eq:ss-ex-ex} follows now by the definition of $\F$.\\

We conclude this example by showing that the functional $\F$ is nonlocal. Indeed, assume by contradiction that $\F$ is local. Then for every $u\in\C$ we can associate to $\F$ an additive set function $\F(u,\cdot)$ on the class $\mathcal{O}(\Omega)$ of open subsets of $\Omega$.
 In particular, for every pair $\Omega_1,\Omega_2\subset \Omega$, with $\Omega_1\subset\subset\Omega_2\subset\subset\Omega$, and for every $u\in\C$ in $\Omega$, there holds
\be{eq:ineq-false-ex-ex}\F(u,\Omega)\leq \F(u,\Omega\setminus\bar{\Omega}_1)+\F(u,\Omega_2).\ee
Let $\xi_1,\xi_2\in\R$, with $\xi_1\neq\xi_2$, and let $\ep,\delta>0$ be such that $\ep<\tfrac12$ and $\delta<\tfrac12-\ep$. Define
$$\Omega_1:=\big(\tfrac12-\ep,\tfrac12+\ep\big)\times \big(\tfrac12-\ep,\tfrac12+\ep\big)\quad\text{and}\quad \Omega_2:=\big(\tfrac12-\ep-\delta,\tfrac12+\ep+\delta\big)\times \big(\tfrac12-\ep-\delta,\tfrac12+\ep+\delta\big).$$
Consider the function
$$u_0(x):=\begin{cases} \Big(\begin{array}{c}-\xi_1\\0\end{array}\Big)&\text{if }x\in \Omega_1,\\
\Big(\begin{array}{c}-\xi_2\\0\end{array}\Big)&\text{otherwise in }\Omega.
\end{cases}$$
We observe $u_0$ belongs to $\C$, since the map 
 $$w_0(x,y):=\begin{cases}\Big(\begin{array}{c}a(y_2)\xi_1\\0\end{array}\Big)&\text{if }x\in \Omega_1,\,y\in Q,\\
 \Big(\begin{array}{c}a(y_2)\xi_2\\0\end{array}\Big)&\text{otherwise in }\Omega\times Q,\end{cases}
$$
 satisfies $w_0\in\C_{u_0}$. By \eqref{eq:becomes-simple}, since ${\rm{div}}\,u_0=0$ in $\Omega\setminus\bar{\Omega}_1$, there holds
 \ba{eq:primo-pezzo}
 &\F(u_0;\Omega\setminus\bar{\Omega}_1)=\int_{\Omega\setminus\bar{\Omega}_1}|u_0(x)|^2\,dx=(1-4\ep^2)\xi_2^2.
 \end{align}
A direct computation yields
$$\phi_{u_0}^\Omega=\begin{cases}(1-2\ep)(\xi_1-\xi_2)&\text{in }\Omega_1\\
2\ep(\xi_2-\xi_1)&\text{in }[\big(0,\tfrac12-\ep\big)\cup\big(\tfrac12+\ep,1\big)]\times\big(\tfrac12-\ep,\tfrac12+\ep\big)\\
0&\text{otherwise in }\Omega,\end{cases}$$
and
$$\phi_{u_0}^{\Omega_2}=\begin{cases}\frac{\delta(\xi_1-\xi_2)}{\ep+\delta}&\text{in }\Omega_1\\
\frac{\ep(\xi_2-\xi_1)}{\ep+\delta}&\text{in }[\big(\tfrac12-\ep-\delta,\tfrac12-\ep\big)\cup\big(\tfrac12+\ep,\tfrac12+\ep+\delta\big)]\times\big(\tfrac12-\ep,\tfrac12+\ep\big)\\
0&\text{otherwise in }\Omega_2.\end{cases}$$
Therefore,
\ba{eq:secondo-pezzo}
\F(u_0,\Omega)&=\int_{\Omega}|u_0(x)|^2\,dx+\int_{\Omega}|\phi_{u_0}^{\Omega}(x)|^2\,dx\\
&=4\ep^2\xi_1^2+(1-4\ep^2)\xi_2^2+4\ep^2(1-2\ep)(\xi_1-\xi_2)^2,
\end{align}
and
\ba{eq:terzo-pezzo}
\F(u_0,\Omega_2)&=\int_{\Omega_2}|u_0(x)|^2\,dx+\int_{\Omega_2}|\phi_{u_0}^{\Omega}(x)|^2\,dx\\
&=4\ep^2\xi_1^2+4\delta(\delta+2\ep)\xi_2^2+\frac{4\ep^2\delta(\xi_1-\xi_2)^2}{\ep+\delta}.
\end{align}
Now \eqref{eq:ineq-false-ex-ex} becomes
$$4\ep^2(1-2\ep)(\xi_1-\xi_2)^2\leq 4\delta(\delta+2\ep)\xi_2^2+\frac{4\ep^2\delta(\xi_1-\xi_2)^2}{\ep+\delta}$$
for every $\ep<\tfrac12$ and $\delta<\tfrac12-\ep$. Letting $\delta\to 0$ we get
 $$4\ep^2(1-2\ep)(\xi_1-\xi_2)^2\leq 0.$$
 Since $\xi_1\neq\xi_2$, this contradicts the subadditivity of $\F(u_0,\cdot)$ and yields the nonlocality of $\F$.
  \end{example}
   
\section*{acknowledgements}
The authors thank the Center for Nonlinear Analysis (NSF Grant No. DMS-0635983), where this research was carried out. The research of 
E. Davoli, and I. Fonseca was funded by the National Science Foundation under Grant No. DMS-
0905778. I. Fonseca was also supported by the National Science Foundation under Grant No. DMS-1411646. E. Davoli and I. Fonseca
 acknowledge support of the National Science Foundation under the PIRE Grant No.
OISE-0967140. 

\bibliographystyle{plain}
\bibliography{ed}

\begin{thebibliography}{10}

\bibitem{acerbi.fusco}
Emilio Acerbi and Nicola Fusco.
\newblock Semicontinuity problems in the calculus of variations.
\newblock {\em Arch. Rational Mech. Anal.}, 86(2):125--145, 1984.

\bibitem{allaire}
Gr{\'e}goire Allaire.
\newblock Homogenization and two-scale convergence.
\newblock {\em SIAM J. Math. Anal.}, 23(6):1482--1518, 1992.

\bibitem{attouch}
H.~Attouch.
\newblock {\em Variational convergence for functions and operators}.
\newblock Applicable Mathematics Series. Pitman (Advanced Publishing Program),
  Boston, MA, 1984.

\bibitem{ball}
John~M. Ball.
\newblock Convexity conditions and existence theorems in nonlinear elasticity.
\newblock {\em Arch. Rational Mech. Anal.}, 63(4):337--403, 1976/77.

\bibitem{bensoussan.lions.papanicolau}
A.~Bensoussan, J.-L. Lions, and G.~Papanicolaou.
\newblock {\em Asymptotic analysis for periodic structures}.
\newblock AMS Chelsea Publishing, Providence, RI, 2011.
\newblock Corrected reprint of the 1978 original [MR0503330].

\bibitem{braides.fonseca.leoni}
Andrea Braides, Irene Fonseca, and Giovanni Leoni.
\newblock {$\scr A$}-quasiconvexity: relaxation and homogenization.
\newblock {\em ESAIM Control Optim. Calc. Var.}, 5:539--577 (electronic), 2000.

\bibitem{cioranescu.damlamian.griso08}
D.~Cioranescu, A.~Damlamian, and G.~Griso.
\newblock The periodic unfolding method in homogenization.
\newblock {\em SIAM J. Math. Anal.}, 40(4):1585--1620, 2008.

\bibitem{cioranescu.damlamian.griso}
Doina Cioranescu, Alain Damlamian, and Georges Griso.
\newblock Periodic unfolding and homogenization.
\newblock {\em C. R. Math. Acad. Sci. Paris}, 335(1):99--104, 2002.

\bibitem{cioranescu.donato}
Doina Cioranescu and Patrizia Donato.
\newblock {\em An introduction to homogenization}, volume~17 of {\em Oxford
  Lecture Series in Mathematics and its Applications}.
\newblock The Clarendon Press, Oxford University Press, New York, 1999.

\bibitem{dacorogna}
Bernard Dacorogna.
\newblock {\em Weak continuity and weak lower semicontinuity of nonlinear
  functionals}, volume 922 of {\em Lecture Notes in Mathematics}.
\newblock Springer-Verlag, Berlin-New York, 1982.

\bibitem{fonseca.dacorogna}
Bernard Dacorogna and Irene Fonseca.
\newblock A-{B} quasiconvexity and implicit partial differential equations.
\newblock {\em Calc. Var. Partial Differential Equations}, 14(2):115--149,
  2002.

\bibitem{fonseca.kromer}
Irene Fonseca and Stefan Kr{\"o}mer.
\newblock Multiple integrals under differential constraints: two-scale
  convergence and homogenization.
\newblock {\em Indiana Univ. Math. J.}, 59(2):427--457, 2010.

\bibitem{fonseca.leoni}
Irene Fonseca and Giovanni Leoni.
\newblock {\em Modern methods in the calculus of variations: {$L^p$} spaces}.
\newblock Springer Monographs in Mathematics. Springer, New York, 2007.

\bibitem{fonseca.muller}
Irene Fonseca and Stefan M{\"u}ller.
\newblock {$\scr A$}-quasiconvexity, lower semicontinuity, and {Y}oung
  measures.
\newblock {\em SIAM J. Math. Anal.}, 30(6):1355--1390 (electronic), 1999.

\bibitem{lukkassen.nguetseng.wall}
Dag Lukkassen, Gabriel Nguetseng, and Peter Wall.
\newblock Two-scale convergence.
\newblock {\em Int. J. Pure Appl. Math.}, 2(1):35--86, 2002.

\bibitem{marcellini}
Paolo Marcellini.
\newblock Approximation of quasiconvex functions, and lower semicontinuity of
  multiple integrals.
\newblock {\em Manuscripta Math.}, 51(1-3):1--28, 1985.

\bibitem{morrey}
Charles~B. Morrey, Jr.
\newblock {\em Multiple integrals in the calculus of variations}.
\newblock Die Grundlehren der mathematischen Wissenschaften, Band 130.
  Springer-Verlag New York, Inc., New York, 1966.

\bibitem{murat}
Fran{\c{c}}ois Murat.
\newblock Compacit\'e par compensation: condition n\'ecessaire et suffisante de
  continuit\'e faible sous une hypoth\`ese de rang constant.
\newblock {\em Ann. Scuola Norm. Sup. Pisa Cl. Sci. (4)}, 8(1):69--102, 1981.

\bibitem{nguetseng}
Gabriel Nguetseng.
\newblock A general convergence result for a functional related to the theory
  of homogenization.
\newblock {\em SIAM J. Math. Anal.}, 20(3):608--623, 1989.

\bibitem{santos}
Pedro~M. Santos.
\newblock {$\scr A$}-quasi-convexity with variable coefficients.
\newblock {\em Proc. Roy. Soc. Edinburgh Sect. A}, 134(6):1219--1237, 2004.

\bibitem{visintin}
Augusto Visintin.
\newblock Some properties of two-scale convergence.
\newblock {\em Atti Accad. Naz. Lincei Cl. Sci. Fis. Mat. Natur. Rend. Lincei
  (9) Mat. Appl.}, 15(2):93--107, 2004.

\bibitem{visintin1}
Augusto Visintin.
\newblock Towards a two-scale calculus.
\newblock {\em ESAIM Control Optim. Calc. Var.}, 12(3):371--397 (electronic),
  2006.

\end{thebibliography}
\end{document}